\def\vavg#1{\{\!\!\{#1\}\!\!\}}
\newcommand{\supp}{{\rm supp}}
\renewcommand{\div}{{\rm div}}
\newcommand{\bE}{{\boldsymbol E}}
\newcommand{\bI}{{\boldsymbol I}}
\newcommand{\bQ}{{\boldsymbol Q}}
\newcommand{\bp}{{\boldsymbol p}}
\newcommand{\bq}{{\boldsymbol q}}
\newcommand{\bw}{{\boldsymbol w}}
\newcommand{\bPi}{{\boldsymbol \Pi}}
\newcommand{\bn}{{\boldsymbol n}}
\newcommand{\be}{{\boldsymbol e}}
\newcommand{\bPhi}{{\boldsymbol \Phi}}
\newcommand{\bPsi}{{\boldsymbol \Psi}}
\newcommand{\bsigma}{{\boldsymbol \sigma}}
\newcommand{\bSigma}{{\boldsymbol \Sigma}}
\newcommand{\btau}{{\boldsymbol \tau}}
\newcommand{\bbeta}{{\boldsymbol \beta}}
\crefname{hypothesis}{Hypothesis}{Hypotheses}
\title{Local discontinuous Galerkin method for the integral fractional Laplacian
\thanks{Submitted to the editors DATE.
\funding{This work was supported in part by the National Natural Science Foundation of China (Grants No. 12222101, No. 12571383, and No. 12288101) and the Beijing Natural Science Foundation (Grant No. 1232007). }}
}
\author{
Rubing Han\thanks{School of Mathematical Sciences, Peking University, Beijing 100871, China (\email{hanrubing@pku.edu.cn}, \email{snwu@math.pku.edu.cn}, \email{zhouhao23@pku.edu.cn}).}
\and Shuonan Wu\footnotemark[2] 
\and Hao Zhou\footnotemark[2]
}
\begin{document}

\maketitle

\begin{abstract}
We develop and analyze a local discontinuous Galerkin (LDG) method for solving integral fractional Laplacian problems on bounded Lipschitz domains. The method is based on a three-field mixed formulation involving the primal variable, its gradient, and the corresponding Riesz potential, yielding a flux-based structure well suited for LDG discretizations while retaining the intrinsic nonlocal interaction. A key ingredient of our analysis is a rigorous study of the weighted H\"older and Sobolev regularity of the Riesz potential, which enables accurate characterization of boundary singularities. Guided by these regularity results, we propose LDG schemes on quasi-uniform and graded meshes, with additional stabilization in the graded case to reconcile the discrepancy between the discrete spaces for the Riesz potential and flux fields. Optimal a priori error estimates are established, and numerical experiments corroborate the theoretical results.
\end{abstract}

\section{Introduction} \label{sc:intro}
	Let $\Omega \subset\mathbb{R}^n$ be a bounded Lipschitz domain. In this paper we consider the following integral fractional Laplace equation with the homogeneous Dirichlet boundary condition: 
\begin{equation}\label{eq:primal_fractional_problem}
\left\{
\begin{aligned}
(-\Delta)^s u(x) = f(x), &\quad x \in \Omega,\\
u(x) = 0,\quad~               &\quad x \in \Omega^c := \mathbb{R}^n \setminus \Omega,
\end{aligned}
\right.
\end{equation}
where the integral fractional Laplacian of order $s\in(0,1)$ is defined by 
\[
(-\Delta)^s u(x) := C_{n,s} \text{P.V.} \int_{\mathbb{R}^n} \frac{u(x) - u(y)}{|x-y|^{n+2s}}\mathrm{d}y, \quad C_{n,s} = \frac{2^{2s} s \Gamma(s+\frac{n}{2})}{\pi^{n/2} \Gamma(1-s)}.
\]

The integral fractional Laplacian is a nonlocal generalization of the classical Laplacian and has been widely used to model phenomena involving long-range interactions in various applications \cite{tankov2003financial, constantin2005euler, caffarelli2011nonlinear}. 
A distinctive feature of the integral fractional Laplacian is that, even on smooth domains with smooth data, the solution to \eqref{eq:primal_fractional_problem} exhibits a boundary singularity of the form $\mathrm{dist}(x,\partial\Omega)^s$ \cite{ros2014dirichlet}. This intrinsic loss of regularity presents substantial challenges for numerical discretizations, and the accompanying error analysis must avoid relying on higher regularity assumptions that the solution does not satisfy.

A number of numerical methods that respect the intrinsic regularity of
the solution to \eqref{eq:primal_fractional_problem} have been
developed in recent years. In the variational framework, a conforming
finite element discretization based on continuous piecewise linear
spaces was proposed in \cite{acosta2017fractional}, where the weighted
H\"older regularity from \cite{ros2014dirichlet} was used to establish both standard and weighted Sobolev regularity, leading to optimal convergence in the energy norm on quasi-uniform and graded meshes. Further developments of this conforming approach include local error estimates \cite{borthagaray2021local,faustmann2022local}, multilevel solvers \cite{faustmann2021stability,borthagaray2023robust}, and extensions to the fractional obstacle problem \cite{borthagaray2019weighted}. In the finite difference setting, monotone schemes applicable to unstructured meshes were introduced in \cite{han2022monotone,han2023monotone}, where the weighted H\"older regularity was again essential for deriving optimal pointwise error bounds; the related two-scale discretization has also been extended to more general integro-differential operators \cite{borthagaray2024monotone}. The sinc-Galerkin method was interpreted as a nonconforming Galerkin scheme in \cite{antil2023analysis}, allowing for an analysis based on weighted Sobolev regularity. An exponentially convergent $hp$ finite element method was developed in \cite{faustmann2023exponential} using anisotropic graded meshes informed by the regularity theory in \cite{faustmann2022weighted}.
Comprehensive surveys on numerical methods and applications for the fractional Laplacian can be found in \cite{bonito2018numerical,lischke2020fractional,borthagaray2023fractional}.

While the methods reviewed above provide effective discretization strategies for the integral fractional Laplacian, they are mostly based on conforming or globally coupled approximation spaces. Discontinuous Galerkin (DG) formulations offer a flexible alternative: the use of discontinuous approximation spaces affords greater freedom in designing numerical fluxes and supports elementwise polynomial adaptivity. 
Over the past decades, DG methods have been systematically developed for second-order elliptic equations, leading to a mature framework with well-established stability theory and a priori error estimates. Representative examples include the interior penalty discontinuous Galerkin (IPDG)  methods \cite{babuvska1973nonconforming, arnold1982interior} and the local discontinuous Galerkin (LDG) methods for elliptic problems \cite{cockburn1998local}. In particular, an a priori error analysis for the LDG method was carried out in \cite{castillo2000priori}, where appropriate numerical fluxes were introduced to ensure stability and accuracy. A comprehensive survey of classical DG formulations for second-order elliptic equations was provided in \cite{arnold2002unified}.

Despite the extensive development of DG methods for second-order elliptic problems, the study of DG discretizations for the fractional Laplacian remains relatively limited. In fact, DG spaces are conforming for $s\in(0,\frac12)$, while for $s\in(\frac12,1)$ they retain their genuinely discontinuous character, which requires careful design of numerical schemes. In \cite{castillo2020convergence}, an LDG method for the spatial fractional Laplacian was investigated, where the formulation and analysis were carried out for one-dimensional problems on uniform meshes. Originally, the LDG methodology was developed for time-dependent systems \cite{cockburn1998local}. Subsequently, systematic constructions of LDG methods for time-dependent fractional diffusion problems were proposed in \cite{deng2013local, qiu2015nodal, nie2021local}. It is worth noting that the error estimates in these works are typically derived under relatively strong smoothness assumptions on the exact solution.

The objective of this work is to develop an LDG discretization for problem \eqref{eq:primal_fractional_problem} when $s\in(\tfrac12,1)$ and to derive error estimates under the intrinsic regularity of the solution. To this end, the problem is first recast, in the spirit of the LDG methodology, into a three-field mixed formulation involving the variables 
$$
u, \quad \boldsymbol{\sigma} :=\nabla u, \quad \text{and} \quad \boldsymbol{p} :=I_{2-2s}\boldsymbol{\sigma},$$ 
where $I_{2-2s}$ denotes certain Riesz potential (see
Definition~\ref{df:Riesz}). The weighted Sobolev regularity of the
Riesz potential is studied in terms of the H\"older regularity of the
data on Lipschitz domains satisfying the exterior ball condition. The
main technical difficulty consists in deriving weighted H\"older
estimates for the Riesz potential from the corresponding weighted
H\"older regularity of $\nabla u$ established in
\cite{ros2014dirichlet}. These results, together with the weighted
Sobolev regularity of $u$ and $\boldsymbol{\sigma}$ proven in
\cite{acosta2017fractional}, yield a complete regularity framework for
the mixed formulation.

Following the approach of \cite{castillo2000priori}, suitable numerical fluxes are introduced to construct the corresponding LDG scheme for the integral fractional Laplacian. A key feature of the present setting is that the above regularity results indicate different polynomial approximation orders should be employed in the DG spaces associated with $\boldsymbol{\sigma}$ and $\boldsymbol{p}$, reflecting their distinct differentiability properties. To accommodate this disparity, a stabilization term is incorporated to control the interaction between the two discrete vector-valued spaces. This leads to an LDG formulation for the fractional Laplacian that is structurally distinct from that used for the classical Laplacian.

A priori error estimates are established for the proposed LDG method on shape-regular triangulations. Projection operators adapted to the discontinuous finite element spaces, including local $L^2$ projections and Scott–Zhang interpolantion, are employed to derive the associated error equations. By invoking a discrete energy identity, which is standard in the analysis of DG methods, together with a duality argument, error bounds are obtained for the solution variable $u$ in mesh-dependent energy norms as well as in the $L^2$ norm. The analysis makes essential use of the relation between the Riesz potential and negative-order Sobolev norms. The resulting error bounds are ultimately reduced to appropriate interpolation estimates, in which the regularity results established above are systematically exploited to derive the corresponding convergence rates on quasi-uniform and graded meshes.

The remainder of this paper is organized as follows. In Section \ref{sc:preliminary}, preliminary results on Sobolev spaces and the Riesz potential are recalled, and the three-field mixed formulation is introduced. Section \ref{sc:regularity} is devoted to the regularity analysis of this formulation, with particular emphasis on the weighted H\"older and Sobolev regularity of the Riesz potential. In Section \ref{sc:discretization}, the LDG method is introduced and the main theoretical results of the paper are stated. The proofs of the error estimates on shape-regular triangulations are presented in Section \ref{sc:triangle}. Finally, numerical results are reported in Section \ref{sc:fracDG-numerical}.

We shall use $X \lesssim Y$ (resp. $X \gtrsim Y$) to denote $X \leq CY$ (resp. $X \geq CY$), where $C$ is a constant independent of the mesh size $h$. Additionally, $X \eqsim Y$ will signify that both $X \lesssim Y$ and $X \gtrsim Y$ hold.

\section{Preliminaries}\label{sc:preliminary}
In this section, we provide a concise overview of several key properties of Sobolev spaces that are essential for our analysis. Furthermore, we reformulate the problem using a mixed formulation, incorporating the so-called \emph{Riesz potential}, and examine its connection to Sobolev spaces.

\subsection{Sobolev spaces}
For $t \in \mathbb{R}$, the Sobolev space $H^t(\mathbb{R}^n)$ can be defined via the Fourier transform (cf. \cite{mclean2000strongly}):  
\[
H^t(\mathbb{R}^n) := \{u \in \mathcal{S}' : \|u\|_{H^t(\mathbb{R}^n)} < \infty\},
\]  
with the norm  
$\|u\|^2_{H^t(\mathbb{R}^n)} := \int_{\mathbb{R}^n} (1+|\xi|^2)^t |\widehat{u}(\xi)|^2 \mathrm{d}\xi$,  
where $\widehat{u} := \mathcal{F}[u]$ denotes the Fourier transform of $u$, and $\mathcal{S}'$ is the dual space of the Schwartz space $\mathcal{S}$.
 Given a Lipschitz domain $\Omega \subset \mathbb{R}^n$, define the spaces  
 \begin{equation} \label{eq:Ht}
 \begin{aligned}
 	H^t(\Omega) &:= \left\{u : u = U|_{\Omega} \text{ for some } U \in H^t(\mathbb{R}^n) \right\}, \\
 	\widetilde{H}^t(\Omega) &:= \left\{u \in H^t(\mathbb{R}^n) : \supp u \subset \overline{\Omega} \right\},
 \end{aligned}
 \end{equation}
 and equip them with the following norms:
 \[
 \|u \|_{H^t(\Omega)} := \inf\limits_{U|_{\Omega} = u} \|U\|_{H^t(\mathbb{R}^n)} \quad \text{and} \quad \|u\|_{\widetilde{H}^t(\Omega)} := \|u\|_{H^t(\mathbb{R}^n)}.
 \]


For $s > 0$, an equivalent norm for $H^s(\Omega)$ is given by  
\[
\|u\|_{H^s(\Omega)}^2 \eqsim \|u\|_{L^2(\Omega)}^2 + |u|_{H^s(\Omega)}^2,
\]  
where the seminorm $|\cdot|_{H^s(\Omega)}$ is defined as  
\[
|u|_{H^s(\Omega)}^2  :=
\left\{
\begin{aligned}
 \sum\limits_{|\alpha| = s} \|\partial^\alpha u \|_{L^2(\Omega)}^2, & \quad \text{if }s \in \mathbb{N}_+, \\
 \sum\limits_{|\alpha| = k} \iint_{\Omega\times \Omega} \frac{|\partial^\alpha u(x) - \partial^\alpha u(y)|^2}{|x-y|^{n+2\mu}}\mathrm{d}x\mathrm{d}y, & \quad \text{if }s = k+\mu, k \in \mathbb{N}, 0 < \mu < 1.
\end{aligned}
 \right.
\]

Throughout this work, boldface symbols, such as $\bp$ and $\boldsymbol{H}^t(\Omega)$, denote vector-valued functions and function spaces, while non-bold symbols, such as $u$ and $H^t(\Omega)$, represent their scalar-valued counterparts.

\subsection{Riesz potential}
For $0< s < 1$, the integral fractional Laplacian can be represented via its Fourier symbol as $\mathcal{F}[(-\Delta)^s u](\xi) = |\xi|^{2s} \mathcal{F}[u](\xi)$.
This expression can also be rewritten as:  
\[
\mathcal{F}[(-\Delta)^s u](\xi) = -i\xi \cdot |\xi|^{2s-2}(i\xi) \mathcal{F}[u](\xi) \quad 0 < s < 1.
\]
From the Fourier transform, it follows that the differential operators corresponding to $-i\xi \cdot$ and $i\xi$ are $-\div$ and $\nabla$, respectively. Furthermore, the operator whose Fourier symbol is $|\xi|^{2s-2}$ is commonly known as the \emph{Riesz potential} of order $2-2s$ \cite{landkof1972foundations}.
\begin{definition}[Riesz potential] \label{df:Riesz}
	For the order $0 < \alpha < n$, the Riesz potential of a function $v \in \mathcal{S}$ is defined as:  
	\begin{equation} \label{eq:Riesz}
	(I_\alpha v)(x) := \frac{1}{\gamma(\alpha)} \int_{\mathbb{R}^n} \frac{v(z)}{|x - z|^{n-\alpha}} \mathrm{d}z,
	\end{equation}
	where $\gamma(\alpha) =\pi^{\frac{n}{2}} 2^\alpha \Gamma(\frac{\alpha}{2})/ \Gamma\left(\frac{n}{2} - \frac{\alpha}{2}\right)$. 
\end{definition}

The Riesz potential satisfies the following properties (cf. \cite{stein1970singular}): For $f, g \in \mathcal{S}$,
\begin{enumerate}
\item For $0 < \alpha < n$, $\mathcal{F}(I_\alpha f) = |\xi|^{-\alpha} \mathcal{F}(f)$.
\item For $0 < \alpha, \beta < n$ and $0 < \alpha + \beta < n$, 
\begin{equation}\label{eq:Riesz-property}
	(I_{\alpha}I_{\beta} f,g)_{L^2(\mathbb{R}^n)} = (I_{\alpha+\beta}f,g)_{L^2(\mathbb{R}^n)}, ~
	(I_{\alpha} f,g)_{L^2(\mathbb{R}^n)} = (f,I_{\alpha}g)_{L^2(\mathbb{R}^n)}.
\end{equation}
\end{enumerate}

For $f \in L^2(\Omega)$ on a bounded domain $\Omega$, we defines its zero extension $\tilde{f}$ to $\mathbb{R}^n$. Then, the Riesz potential 
\eqref{eq:Riesz} is then expressed as:
\[
I_\alpha \tilde{f}(x) = \frac{1}{\gamma(\alpha)}\int_{\mathbb{R}^n} \frac{\tilde{f}(x-z)}{|z|^{n-\alpha}} \mathrm{d}z = \frac{1}{\gamma(\alpha)}\int_{|z| < R} \frac{\tilde{f}(x-z)}{|z|^{n-\alpha}} \mathrm{d}z = \frac{1}{\gamma(\alpha)}\left( \tilde{f} * \frac{\chi_{B_R}(z)}{|z|^{n-\alpha}} \right)(x),
\]
where $R > 0$ is chosen such that $\Omega \subset B_{\frac{R}{2}}$, and $\chi_{B_R}(z)$ is the characteristic function of the ball $B_R$.
Recall the definition of the Hardy-Littlewood maximal function:
\[
M\tilde{f}(x) := \sup_{r > 0} \frac{1}{|B_r(x)|} \int_{B_r(x)} |\tilde{f}(z)| \, \mathrm{d}z.
\]
Since $\frac{\chi_{B_R}(z)}{|z|^{n-\alpha}}$ is a radially decreasing function, it follows from \cite[Proposition 2.7, Chapter 2]{duoandikoetxea2024fourier} that   
\[
\begin{aligned}
|I_\alpha \tilde{f}(x)| &= \frac{1}{\gamma(\alpha)} \left| \left(\tilde{f} * \frac{\chi_{B_R}(z)}{|z|^{n-\alpha}}\right)(x) \right| \\
& \leq M\tilde{f}(x) \frac{1}{\gamma(\alpha)}\int_{\mathbb{R}^n} \frac{\chi_{B_R}(z)}{|z|^{n-\alpha}} \mathrm{d}z = C(n,\alpha) R^\alpha M\tilde{f}(x).
\end{aligned}
\]
Hence,  using the $L^2$-boundedness of the maximal operator, we obtain:
\begin{equation} \label{eq:Riesz-L2boundedness}
	\|I_\alpha \tilde{f}\|_{L^2(\mathbb{R}^n)} \leq C(n,\alpha) R^\alpha \|M\tilde{f}\|_{L^2(\mathbb{R}^n)} \leq C(n,\alpha) R^\alpha \|f\|_{L^2(\Omega)}.
\end{equation}

For convenience, in the following, we denote the Riesz potential of the zero extension of a function $f \in L^2(\Omega)$ simply as $I_\alpha f$, without distinguishing between the zero extension and the original function. Thanks to \eqref{eq:Riesz-L2boundedness} and the density argument, the above properties still hold for $f, g \in L^2(\Omega)$.

\subsection{A mixed formulation}
Let $\mathcal{F}[{\bsigma}] = i\xi \mathcal{F}[{u}]$ and $\mathcal{F}[{\bp}] = |\xi|^{2s-2} \mathcal{F}[{\bsigma}]$. Using these relations, the problem (\ref{eq:primal_fractional_problem}) can be reformulated into a mixed formulation: 
\begin{equation}\label{eq:mixed_fractional_problem}
	\left\{
	\begin{aligned}
		\bsigma - \nabla u &= 0,\quad \text{in } \Omega,\\
		\bp  -  I_{2-2s} \bsigma  &= 0,\quad \text{in } \mathbb{R}^n,\\
		-\div \bp &= f,\quad \text{in } \Omega,\\
		u & = 0,\quad  \text{in } \Omega^c.
	\end{aligned}
	\right.
\end{equation}
Since $u$ vanishes on $\Omega^c$, it follows that $\bsigma$ also vanishes there. Consequently, according to \eqref{eq:Riesz-L2boundedness}, $I_{2-2s} \bsigma|_{\Omega} \in L^2(\Omega)$ when $u$ satisfies $H^1$-regularity. Although $\bp$ is defined over the entire domain $\mathbb{R}^n$, the third equation in \eqref{eq:mixed_fractional_problem} only holds within $\Omega$. This feature allows the mixed formulation to effectively restrict all functions to $\Omega$ in the discrete setting, which is one of its key characteristics and advantages (see Section \ref{sc:discretization}).

We conclude this section by establishing the equivalence between the bilinear form induced by the Riesz potential and the standard Sobolev norm. 

\begin{lemma}[equivalent form of $\widetilde{H}^s$-norm]\label{lem:riesz_grad}  
    Given a bounded Lipschitz domain $\Omega$, let $\tilde{v}$ denote the zero extension of $v$ outside $\Omega$.  For $\frac12 < s< 1$, it holds that   
    \[
    \|I_{1-s} \nabla \tilde{v}\|_{L^2(\mathbb{R}^n)} \eqsim \|\tilde{v}\|_{\widetilde{H}^s(\Omega)}, \quad \forall v\in H_0^1(\Omega).
    \]
\end{lemma}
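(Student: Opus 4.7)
The plan is to reduce both sides to integrals against the Fourier multiplier $|\xi|^{2s}$ and then invoke the equivalence of the $\widetilde H^s(\Omega)$--norm with its fractional seminorm on bounded domains. Since $v\in H_0^1(\Omega)$, the zero extension $\tilde v$ lies in $H^1(\mathbb R^n)$ with support in $\overline\Omega$, so each component of $\nabla\tilde v$ is a compactly supported $L^2(\mathbb R^n)$ function, and \eqref{eq:Riesz-L2boundedness} ensures $I_{1-s}\nabla\tilde v\in L^2(\mathbb R^n)$. Starting from the Fourier identity $\mathcal F[I_{1-s}w](\xi)=|\xi|^{s-1}\widehat w(\xi)$ for $w\in\mathcal S$ and extending by density of $\mathcal S$ in compactly supported $L^2$ functions (using \eqref{eq:Riesz-L2boundedness}), one obtains $\mathcal F[I_{1-s}\nabla\tilde v](\xi)=i\xi\,|\xi|^{s-1}\,\widehat{\tilde v}(\xi)$. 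Squaring, summing over components and applying Plancherel yields the clean identity
\[
\|I_{1-s}\nabla\tilde v\|_{L^2(\mathbb R^n)}^2 \;=\; \int_{\mathbb R^n}|\xi|^{2s}\,|\widehat{\tilde v}(\xi)|^2\,d\xi.
\]

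One direction of the equivalence is then immediate from the pointwise bound $|\xi|^{2s}\le(1+|\xi|^2)^s$, which gives $\|I_{1-s}\nabla\tilde v\|_{L^2(\mathbb R^n)}\le\|\tilde v\|_{\widetilde H^s(\Omega)}$. For the reverse direction I would exploit the elementary two-sided bound $(1+|\xi|^2)^s\eqsim 1+|\xi|^{2s}$ to split
\[
\|\tilde v\|_{\widetilde H^s(\Omega)}^2 \;\eqsim\; \|\tilde v\|_{L^2(\mathbb R^n)}^2 \;+\; \int_{\mathbb R^n}|\xi|^{2s}\,|\widehat{\tilde v}(\xi)|^2\,d\xi,
\]
where the second term is precisely the quantity just computed. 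It then only remains to absorb the $L^2$ contribution into the seminorm on the right.

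This last step is the main technical obstacle and requires a fractional Poincar\'e inequality
\[
\|\tilde v\|_{L^2(\mathbb R^n)} \;\lesssim\; \Bigl(\int_{\mathbb R^n}|\xi|^{2s}\,|\widehat{\tilde v}(\xi)|^2\,d\xi\Bigr)^{1/2}, \qquad v\in H_0^1(\Omega),
\]
whose validity rests essentially on $\tilde v$ being supported in the bounded set $\overline\Omega$. I would derive it from the continuous (and in fact compact) embedding $\widetilde H^s(\Omega)\hookrightarrow L^2(\Omega)$ together with the observation that $\tilde v\mapsto |\xi|^s\widehat{\tilde v}$ has trivial kernel on $\widetilde H^s(\Omega)$, a standard contradiction/compactness argument. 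Once this Poincar\'e bound is in place, combining it with the two inequalities above yields the stated equivalence $\|I_{1-s}\nabla\tilde v\|_{L^2(\mathbb R^n)}\eqsim\|\tilde v\|_{\widetilde H^s(\Omega)}$.
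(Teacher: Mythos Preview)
Your proposal is correct and follows essentially the same route as the paper: compute $\|I_{1-s}\nabla\tilde v\|_{L^2(\mathbb R^n)}^2$ via the Fourier symbol of the Riesz potential to obtain $\int_{\mathbb R^n}|\xi|^{2s}|\widehat{\tilde v}(\xi)|^2\,d\xi$, then close the equivalence using a fractional Poincar\'e inequality on $\widetilde H^s(\Omega)$. The only noteworthy difference is that the paper simply cites the Poincar\'e inequality from \cite{acosta2017fractional} (and reduces to $v\in C_0^\infty(\Omega)$ by density rather than extending the Fourier identity to compactly supported $L^2$ data), whereas you sketch a compactness/contradiction argument for it; both approaches are valid and lead to the same conclusion.
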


\begin{proof}
    By density argument, we only need to consider $v\in C_0^\infty(\Omega)$. Using \eqref{eq:Riesz-property} and Fourier transform of Riesz potential
    \[
    \|I_{1-s} \nabla \tilde{v}\|^2_{L^2(\mathbb{R}^n)} = (I_{2-2s}\nabla \tilde{v},\nabla \tilde{v}) = \frac{1}{(2\pi)^n} \int_{\mathbb{R}^n}|\xi|^{2s}|\mathcal{F}[\tilde{v}](\xi)|^2\mathrm{d}\xi \eqsim |\tilde{v}|_{H^s(\mathbb{R}^n)}^2.
    \]  
    Then, by the Poincaré inequality \cite[Proposition 2.4]{acosta2017fractional},  
    \[
    \|\tilde{v}\|_{L^2(\mathbb{R}^n)} = \|v\|_{L^2(\Omega)}\leq C(\Omega, n, s)|v|_{H^s(\mathbb{R}^n)}\quad \forall v\in \widetilde{H}^s(\Omega),
    \]
    we have  
    \[
    \|\tilde{v}\|_{\widetilde{H}^s(\Omega)}\lesssim \|I_{1-s} \nabla \tilde{v}\|_{L^2(\mathbb{R}^n)} \quad \forall v\in H_0^1(\Omega).
    \]
    The converse inequality is trivial by definition, which completes the proof.  
\end{proof}

\section{Sobolev regularity} \label{sc:regularity}
The regularity of the exact solution plays a fundamental role in the analysis of numerical errors. In this section, the Sobolev regularity theory for the integral fractional Laplacian is briefly reviewed, following the summary in \cite{acosta2017fractional, bonito2018numerical, borthagaray2021local}. Afterwards, we discuss the Sobolev regularity of $\bp := I_{2-2s}\nabla u$, which constitutes one of the main contributions of this work.

\subsection{Sobolev regularity of $u$ and $\bsigma$}
Since $\bsigma = \nabla u$, it suffices to present the regularity result for $u$. The following result is given in \cite[Eq.~(2.6)]{borthagaray2021local}.
\begin{theorem}[Sobolev regularity of $u$]\label{thm:sobolev_regularity}
Let $\Omega$ be a bounded Lipschitz domain, $\frac{1}{2} < s < 1$, and $f \in L^2(\Omega)$. Then the solution $u$ to problem~\eqref{eq:primal_fractional_problem} satisfies
	\begin{equation}\label{eq:sobolev-regularity-u}
		\|u\|_{H^{s+\frac12-\varepsilon}(\Omega)} \leq \frac{C(\Omega, n, s)}{\sqrt{\varepsilon}} \|f\|_{L^2(\Omega)}\quad \forall \;0< \varepsilon < s.
	\end{equation}
\end{theorem}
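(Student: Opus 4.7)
The plan is to deduce the shifted estimate \eqref{eq:sobolev-regularity-u} by combining the energy bound in $\widetilde H^s(\Omega)$ with sharp boundary behaviour, essentially reproducing the regularity argument of \cite{acosta2017fractional, borthagaray2021local}. The threshold $s+\tfrac12$ is the Sobolev index beyond which the boundary singularity $u(x)\sim d(x,\partial\Omega)^s$ of the solution is no longer accommodated, and the blow-up $1/\sqrt{\varepsilon}$ quantifies this critical failure, ultimately arising as $\bigl(\int_0^{\mathrm{diam}\,\Omega} r^{-1+2\varepsilon}\,\mathrm dr\bigr)^{1/2} \sim 1/\sqrt{\varepsilon}$.

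The first step is a standard energy estimate: I would apply Lax--Milgram to the bilinear form
\[
a(u,v) = \frac{C_{n,s}}{2}\iint_{\mathbb R^n\times\mathbb R^n}\frac{(u(x)-u(y))(v(x)-v(y))}{|x-y|^{n+2s}}\,\mathrm dx\,\mathrm dy,
\]
which is continuous and coercive on $\widetilde H^s(\Omega)$ (coercivity following from the fractional Poincar\'e inequality), and combine with the embedding $L^2(\Omega) \hookrightarrow (\widetilde H^s(\Omega))^\ast$ to obtain the baseline $\|u\|_{\widetilde H^s(\Omega)} \lesssim \|f\|_{L^2(\Omega)}$.

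Next I would upgrade this energy regularity by invoking two complementary ingredients: the interior ellipticity of $(-\Delta)^s$ as a pseudodifferential operator of order $2s$, which furnishes $u \in H^{2s}_{\mathrm{loc}}(\Omega)$ together with a distance-weighted $L^2$ control of $\nabla^2 u$; and the boundary H\"older/weighted regularity of Ros-Oton--Serra \cite{ros2014dirichlet}, which provides quantitative pointwise control of $u$ through powers of $d(\cdot,\partial\Omega)$. Armed with these, I would estimate the Slobodeckij seminorm
\[
|u|^2_{H^{s+1/2-\varepsilon}(\Omega)} = \iint_{\Omega\times\Omega}\frac{|u(x)-u(y)|^2}{|x-y|^{n+2s+1-2\varepsilon}}\,\mathrm dx\,\mathrm dy
\]
by a dyadic decomposition based on the distance to $\partial\Omega$: on each dyadic shell, the interior bound is used when $|x-y| \le c\,d(x,\partial\Omega)$ while the boundary bound handles the complementary regime. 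Summing the resulting geometric series against a fractional Hardy inequality then produces a divergent factor of exactly the form $\int_0^{\mathrm{diam}\,\Omega} r^{-1+2\varepsilon}\,\mathrm dr \sim 1/\varepsilon$, whose square root yields the claimed $1/\sqrt{\varepsilon}$.

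The main obstacle is the sharp tracking of the $\varepsilon$-dependence. A generic real-interpolation argument between $\widetilde H^s(\Omega)$ and a weighted version of $H^{2s}(\Omega)$ would furnish regularity of order $s+\tfrac12-\varepsilon$ but with a constant typically of order $1/\varepsilon$, not $1/\sqrt{\varepsilon}$; extracting the optimal prefactor requires the weighted $L^2$-Hardy computation indicated above, in which the square root is pulled outside the integral \emph{before} the critical singularity is exposed. This refined weighted estimate is the technical core of the argument, and is the step I would take directly from \cite{borthagaray2021local}.
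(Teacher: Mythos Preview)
The paper does not prove this statement itself; the result is simply quoted from \cite[Eq.~(2.6)]{borthagaray2021local}, so there is no in-paper argument to compare against.

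Your sketch, however, does not match the stated hypotheses. You propose to feed the pointwise boundary behaviour $u(x)\sim d(x,\partial\Omega)^s$ from Ros-Oton--Serra \cite{ros2014dirichlet} into a dyadic shell estimate of the Slobodeckij seminorm, but that input is not available at the generality required here: the results of \cite{ros2014dirichlet} need $\Omega$ to satisfy the exterior ball condition (the theorem assumes only Lipschitz) and $f$ to be at least bounded (here $f\in L^2(\Omega)$ only). The paper itself is careful about this distinction: Theorem~\ref{thm:weighted_sobolev_estimate}, which \emph{does} rest on \cite{ros2014dirichlet}, carries both the exterior-ball hypothesis and the requirement $f\in C^\beta(\overline{\Omega})$, whereas Theorem~\ref{thm:sobolev_regularity} is stated for bare Lipschitz domains and $L^2$ data precisely because its proof does not pass through pointwise H\"older control. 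Your route would recover the estimate only under strictly stronger assumptions on both the domain and the data, so as written it does not prove the theorem as stated.
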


In \cite{acosta2017fractional}, weighted Sobolev spaces were introduced to describe the higher regularity of fractional Laplacian solutions near the boundary, where the weight is given by a power of the distance to $\partial \Omega$.  
Let  
\[
\delta(x):=\text{dist}(x,\partial\Omega), \quad \delta(x,y):=\min\{\delta(x),\delta(y)\}.
\]
For $k\in\mathbb{N}\cup\{0\}$ and $\gamma\ge0$, define the weighted Sobolev norm
\begin{equation*}
	\|v\|_{H^k_\gamma(\Omega)}^2 := \int_\Omega 
	\Big(|v(x)|^2 + \sum_{|\beta|\le k}|\partial^\beta v(x)|^2\Big)\delta(x)^{2\gamma}\,\mathrm{d}x.
\end{equation*}
The spaces $H^k_\gamma(\Omega)$ and $\widetilde H^k_\gamma(\Omega)$ are defined as the closures of $C^\infty(\Omega)$ and $C_0^\infty(\Omega)$, respectively, with respect to this norm. For $t=k+t'$ with $k\in\mathbb{N}\cup\{0\}$, $t'\in(0,1)$, and $\gamma\ge0$, set
\begin{equation}\label{eq:weighted_norm}
\begin{aligned}
	\|v\|_{H^t_\gamma(\Omega)}^2 &:= \|v\|_{H^k_\gamma(\Omega)}^2 + |v|_{H^t_\gamma(\Omega)}^2,\\
	|v|_{H^t_\gamma(\Omega)}^2 &:= 
	\iint_{\Omega \times \Omega}
	\frac{|\partial^k v(x)-\partial^k v(y)|^2}{|x-y|^{n+2t'}}\,
	\delta(x,y)^{2\gamma}\,\mathrm{d}y\,\mathrm{d}x,
\end{aligned}
\end{equation}
and define $H^t_\gamma(\Omega):=\{v\in H^k_\gamma(\Omega):\|v\|_{H^t_\gamma(\Omega)}<\infty\}$.
Analogously to the unweighted case, the zero-extension weighted Sobolev space is given by
\begin{equation}\label{eq:weighted_zero_space}
	\widetilde H^t_\gamma(\Omega):=
	\{v\in H^t_\gamma(\mathbb{R}^d): v=0 \text{ a.e. in } \Omega^c\},
\end{equation}
with the norm $\|v\|_{\widetilde H^t_\gamma(\Omega)}^2
:=\|v\|_{\widetilde H^k_\gamma(\Omega)}^2
+|v|_{H^t_\gamma(\mathbb{R}^d)}^2$. The following weighted regularity estimate then holds in $\widetilde H^t_\gamma(\Omega)$, see \cite[Theorem~2.3]{borthagaray2021local}.

\begin{theorem}[weighted Sobolev regularity of $u$]\label{thm:weighted_sobolev_estimate}
	Let $\Omega$ be a bounded, Lipschitz domain satisfying the exterior ball condition, $s\in(0,1)$, $f\in C^\beta(\overline{\Omega})$ for some $\beta  \in(0,2-2s),\; \gamma \geq0$, $t< \min\{ \beta +2s, \gamma + s +\frac{1}{2}\}$, and $u $ be the solution of (\ref{eq:primal_fractional_problem}). Then, it holds that $u\in \widetilde{H}_{\gamma}^t(\Omega)$ and 
	\begin{equation} \label{eq:weighted-regularity-u}
	\|u\|_{\widetilde{H}_\gamma^t(\Omega)} \leq \frac{C(\Omega, n, s)}{\sqrt{(\beta+2s-t)(\frac12+s + \gamma-t)}}\|f\|_{C^\beta(\overline{\Omega})} .
	\end{equation}
\end{theorem}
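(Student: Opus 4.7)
The plan is to couple the weighted H\"older regularity of Ros-Oton-Serra with a dyadic decomposition relative to $\partial\Omega$, converting pointwise derivative bounds into the weighted Sobolev norm \eqref{eq:weighted_norm}. Under the exterior ball condition, the Schauder-type theory for the fractional Laplacian yields both the boundary behavior $u \sim \delta^s$ and, by rescaling interior estimates on balls of radius $\delta(x)/2$, the pointwise bounds
\begin{equation*}
|\partial^\alpha u(x)| \lesssim \|f\|_{C^\beta(\overline{\Omega})}\, \delta(x)^{\beta + 2s - |\alpha|}, \qquad |\alpha| \ge 1,
\end{equation*}
together with a compatible H\"older modulus $|\partial^\alpha u(x)-\partial^\alpha u(y)|\lesssim |x-y|^\mu \delta(x,y)^{\beta+2s-|\alpha|-\mu}$ for pairs of points with $|x-y|\le \delta(x,y)/2$. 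This pointwise input is the only regularity information I would need.

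Writing $t = k + t'$ with $k \in \mathbb{N}\cup\{0\}$ and $t' \in (0,1)$, I would partition $\Omega$ into dyadic slabs $S_j := \{x \in \Omega : 2^{-j-1} < \delta(x) \le 2^{-j}\}$ for $j$ sufficiently large. Substituting the pointwise bound into $\int_\Omega |\partial^\alpha u|^2\, \delta^{2\gamma}\,\mathrm{d}x$ gives a contribution of order $2^{-j(2\gamma + 2(\beta + 2s - k) + 1)}$ per slab, and the resulting geometric series converges precisely when $t < \gamma + s + \frac12$, with common ratio approaching $1$ as $t$ approaches this threshold. The fractional seminorm $|u|_{H^t_\gamma(\Omega)}^2$ I would split into the near-diagonal region $\{|x-y|\le \delta(x,y)/2\}$, controlled by the local H\"older modulus and integrated over each $S_j$, and the far-diagonal region $\{|x-y|>\delta(x,y)/2\}$, where $|x-y|$ is comparable to $\delta(x,y)$ and one may bound $|\partial^\alpha u(x) - \partial^\alpha u(y)|$ by the pointwise estimates at $x$ and $y$ separately. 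Summing the dyadic contributions produces the factor $1/\sqrt{(\beta+2s-t)(s+\frac12+\gamma-t)}$ from the two borderline geometric sums.

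The main obstacle will be the sharp tracking of the boundary-layer threshold $t < \gamma + s + \frac12$. The pointwise estimate is only borderline square-integrable against $\delta^{2\gamma}$ in each slab, so the dyadic sum near $\partial\Omega$ must be handled with care to recover the explicit $1/\sqrt{s+\frac12+\gamma-t}$ dependence rather than a generic diverging bound; this requires tracking the slab-by-slab constants rather than bounding the tail crudely. The second threshold $t<\beta+2s$ is a purely interior constraint inherited from Schauder-type estimates, and combining it with the boundary analysis, together with the zero-extension compatibility in \eqref{eq:weighted_zero_space}, yields $u \in \widetilde{H}^t_\gamma(\Omega)$ with the stated norm bound.
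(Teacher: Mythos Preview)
The paper does not supply its own proof of this theorem; it is quoted from \cite[Theorem~2.3]{borthagaray2021local}, which in turn refines \cite{acosta2017fractional}. Your overall plan --- feed the Ros--Oton--Serra weighted H\"older bounds into a near-diagonal/far-diagonal splitting of the double integral in \eqref{eq:weighted_norm} and reduce everything to the integrability of powers of $\delta$ --- is exactly the argument used in those references, and it is the same mechanism the paper itself employs for the analogous result on $\bp$ (Theorem~\ref{thm:weighted_sobolev-p}, via the sets $A\setminus B$ and $B$ together with the identity \eqref{eq:delta-integral}). The dyadic slabs you propose are equivalent to that identity.

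There is, however, a genuine error in the pointwise input you write down. The boundary behavior of $u$ is governed by $u\sim\delta^s$, so the bounds that actually come out of \cite{ros2014dirichlet} (recorded in the paper as \eqref{eq:weighted_holder_w_all} for $\nabla u$) are
\[
|\partial^\alpha u(x)|\lesssim \delta(x)^{\,s-|\alpha|},\qquad
|\partial^k u(x)-\partial^k u(y)|\lesssim \delta(x,y)^{-(\beta+s)}\,|x-y|^{\beta+2s-k},
\]
not $|\partial^\alpha u|\lesssim\delta^{\beta+2s-|\alpha|}$ and not the H\"older weight $\delta^{\beta+2s-|\alpha|-\mu}$. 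Your version already fails for the explicit ball solution $u=(1-|x|^2)^s_+$, where $|\nabla u|\sim\delta^{s-1}$ while your bound would predict the milder $\delta^{\beta+2s-1}$. Consistently, with your exponent the slab contribution to $\int_\Omega|\partial^k u|^2\delta^{2\gamma}$ is of order $2^{-j(2\gamma+2(\beta+2s-k)+1)}$, which sums whenever $k<\gamma+\beta+2s+\tfrac12$ --- a strictly weaker constraint than the threshold $t<\gamma+s+\tfrac12$ you then assert, so your computation and your stated conclusion do not match. Once the correct weights $\delta^{s-|\alpha|}$ and $\delta^{-(\beta+s)}$ are used, the near-diagonal integral produces the factor $(\beta+2s-t)^{-1}$ from the inner radial integration and $(\tfrac12+s+\gamma-t)^{-1}$ from the outer $\delta$-integration via \eqref{eq:delta-integral}, and the argument closes as in the cited references.
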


\begin{remark}[differentiability orders of $u$ and $\bsigma$] \label{rmk:differentiability-u-sigma}
Roughly speaking, when the right-hand side $f$ is sufficiently smooth (that is, when $\beta$ approaches $2-2s$ in Theorem~\ref{thm:weighted_sobolev_estimate}), the Sobolev differentiability order of $u$ increases with the weight parameter~$\gamma$, reaching up to approximately $s+\frac{1}{2}+\gamma$. Nevertheless, the overall regularity of $u$ does not exceed order~$2$, and the differentiability of $\bsigma=\nabla u$ remains below~$1$, regardless of the choice of the weight.
\end{remark}


Now, we discuss the unweighted Sobolev regularity of $\bp := I_{2-2s}\nabla u$, which follows from standard potential estimates.  
Let $w := \partial_i u \in H^{s-\frac{1}{2}-\varepsilon}(\Omega)$ for any  $0 < \varepsilon < s-\frac12$. Then,
\[
\|I_{2-2s}w\|_{H^{\frac{3}{2}-s-\varepsilon}(\Omega)} 
\leq \|I_{2-2s}\tilde{w}\|_{H^{\frac{3}{2}-s-\varepsilon}(\mathbb{R}^n)} 
\lesssim \|\tilde{w}\|_{H^{s-\frac{1}{2}-\varepsilon}(\mathbb{R}^n)} 
\lesssim \|w\|_{H^{s-\frac{1}{2}-\varepsilon}(\Omega)}.
\]
The second inequality follows from standard Fourier analysis on $\mathbb{R}^n$ \cite{stein1970singular}, while the last one is a consequence of the Sobolev extension theorem (noting that $0 < s-\frac{1}{2} < \frac{1}{2}$). By combining the above estimate with \eqref{eq:sobolev-regularity-u}, we obtain that for any $0 < \varepsilon < s-\tfrac{1}{2}$,
\begin{equation} \label{eq:sobolev-regularity-all}
\|\bsigma\|_{H^{s-\frac{1}{2}-\varepsilon}(\Omega)} 
+ \|\bp\|_{H^{\frac{3}{2}-s-\varepsilon}(\Omega)} 
+ \|u\|_{H^{s+\frac{1}{2}-\varepsilon}(\Omega)} 
\leq \frac{C(\Omega, n, s)}{\sqrt{\varepsilon}} \|f\|_{L^2(\Omega)}.
\end{equation}

\subsection{Weighted H\"{o}lder regularity of $\bp$}
The study of the weighted regularity for $\bp := I_{2-2s}\nabla u$ follows a similar spirit to the weighted fractional regularity theory developed in \cite{acosta2017fractional}, on which Theorem~\ref{thm:weighted_sobolev_estimate} is also based. However, the analysis here involves additional technical challenges due to the interplay between the nonlocal operator $I_{2-2s}$ and the boundary singularity of $\nabla u$. As in the classical approach, the starting point is the weighted Hölder estimate for $u$, originally established in \cite{ros2014dirichlet}. Specifically, from \cite[Proposition 1.4]{ros2014dirichlet}, one has for any $\beta\in(0,2-2s)$
\begin{equation*}
\sup_{x\in\Omega}\delta(x)^{1-s}|\nabla u(x)| + 
\sup_{x,y\in\Omega}
\delta(x,y)^{\beta+s} \frac{|\nabla u(x)-\nabla u(y)|}{|x-y|^{\beta+2s-1}}
\leq C(\Omega,s,\beta) \|f\|_{C^{\beta}(\overline{\Omega})}.
\end{equation*}
This corollary is also explicitly stated in \cite[Remark~3.4]{acosta2017fractional}.  
In that reference, the right-hand side is measured in the weighted Hölder norm $\|f\|_{\beta}^{(s)}$.  
Here, for simplicity, we employ the unweighted norm $\|f\|_{C^\beta(\overline{\Omega})}$ instead, which yields a slightly stronger condition and avoids introducing additional notation.

We now set $w = \partial_i u$.  
Then, the above weighted H\"{o}lder estimate above can be expressed in the form: for any $\beta \in (0,2-2s)$
\begin{subequations}\label{eq:weighted_holder_w_all}
\begin{align}
|w(x)| &\leq C(\Omega,s,\beta)\, \|f\|_{C^\beta(\overline{\Omega})}\, \delta(x)^{s-1}
&& \forall\, x\in\Omega, \label{eq:weighted_holder_w_a}\\[4pt]
|w(x) - w(y)| &\leq C(\Omega,s,\beta)\, \|f\|_{C^\beta(\overline{\Omega})}\,
\delta(x,y)^{-\beta - s}\, |x - y|^{\beta + 2s - 1}
&& \forall\, x,y\in\Omega.
\label{eq:weighted_holder_w_b}
\end{align}
\end{subequations}

\begin{lemma}[differentiability of the Riesz potential]\label{lem:riesz_diff}
Let $w$ satisfy the weighted Hölder estimates in \eqref{eq:weighted_holder_w_all}.  
Then the gradient of the Riesz potential $I_{2-2s}w$ exists pointwise for all $x\in\Omega$ and is given by
\begin{equation}\label{eq:riesz_grad}
\nabla I_{2-2s}w(x)
 = \tilde{C}_{n,s} \mathrm{P.V.} \int_{\Omega} 
   \frac{w(z)(x-z)}{|x-z|^{n+2s}}\mathrm{d}z, ~\tilde{C}_{n,s} := \frac{(2-2s-n)\Gamma(\frac{n}{2} -  1 + s)}{\pi^{\frac{n}{2}} 2^{2-2s}\Gamma(1-s)}.
\end{equation} 
Moreover, $\nabla I_{2-2s}w$ satisfies the pointwise estimate
\begin{equation}\label{eq:riesz_grad_est}
|\nabla I_{2-2s}w(x)| \le C(\Omega,s,\beta)\|f\|_{C^\beta(\overline{\Omega})}\delta(x)^{-s}
\quad \forall\, x\in\Omega.
\end{equation}
\end{lemma}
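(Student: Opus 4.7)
The plan is to first establish that the principal-value integral on the right-hand side of \eqref{eq:riesz_grad} is well-defined and satisfies the bound \eqref{eq:riesz_grad_est}, and then to verify that it coincides with $\nabla I_{2-2s}w(x)$ via a difference-quotient argument. Throughout, let $\alpha := n-2+2s \in (0,n)$ and, for a fixed $x \in \Omega$, $d := \delta(x)/2$, so that $B_d(x) \subset \Omega$. Split $\int_{\Omega \setminus B_\epsilon(x)} = \int_{B_d(x)\setminus B_\epsilon(x)} + \int_{\Omega\setminus B_d(x)}$. Since $(x-z)/|x-z|^{n+2s}$ is antisymmetric under the reflection $z\mapsto 2x-z$, its integral over the centered annulus $B_d(x)\setminus B_\epsilon(x)$ vanishes, so adding and subtracting $w(x)$ in the inner piece leaves $\int_{B_d(x)\setminus B_\epsilon(x)} (w(z)-w(x))(x-z)/|x-z|^{n+2s}\,\mathrm{d}z$. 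The estimate \eqref{eq:weighted_holder_w_b} together with $\delta(x,z)\gtrsim \delta(x)$ for $z\in B_d(x)$ yields the absolutely integrable majorant $C\|f\|_{C^\beta(\overline{\Omega})}\delta(x)^{-\beta-s}|x-z|^{\beta-n}$ (integrable since $\beta>0$), so the P.V.\ limit exists.

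The estimate \eqref{eq:riesz_grad_est} then follows from two contributions. The inner one is bounded by $C\|f\|_{C^\beta(\overline{\Omega})}\delta(x)^{-\beta-s}\int_{B_d(x)}|x-z|^{\beta-n}\,\mathrm{d}z \lesssim \|f\|_{C^\beta(\overline{\Omega})}\delta(x)^{-s}$ using the majorant above. For the outer one, \eqref{eq:weighted_holder_w_a} gives $\int_{\Omega\setminus B_d(x)}|w|/|x-z|^{n+2s-1}\,\mathrm{d}z \lesssim \|f\|_{C^\beta(\overline{\Omega})}\int_{\Omega\setminus B_d(x)}\delta(z)^{s-1}/|x-z|^{n+2s-1}\,\mathrm{d}z$. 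Using dyadic annuli $\{2^k d \leq |x-z| < 2^{k+1}d\}$ together with a layer-cake computation based on the Lipschitz tubular-neighborhood estimate $|\{z\in\Omega:\delta(z)<\tau\}\cap B_r(x)|\lesssim r^{n-1}\tau$, one obtains $\int_{B_R(x)\cap\Omega}\delta^{s-1}\,\mathrm{d}z \lesssim R^{n+s-1}$ for $R\geq d$; summing the geometric series in $k$ produces the outer bound $\lesssim \|f\|_{C^\beta(\overline{\Omega})}\delta(x)^{-s}$.

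Finally, to identify the P.V.\ integral with $\nabla I_{2-2s}w(x)$, form the difference quotient in direction $e_i$ and split over $B_d(x)$ and $\Omega\setminus B_d(x)$. On the outer region the kernel $|x-z|^{-\alpha}$ is smooth in $x$, so the mean-value theorem plus dominated convergence (with the majorant from the outer bound) pushes the limit inside the integral. On the inner region, decompose $w(z) = w(x) + (w(z)-w(x))$: the H\"older-difference part is handled by an analogous DCT argument, with the contribution from $\{|x-z|\leq 2|h|\}$ vanishing like $|h|^\beta$; the $w(x)$ part reduces to the derivative at $v=0$ of the even function $H(v) := \int_{B_d(v)}|y|^{-\alpha}\,\mathrm{d}y$, which vanishes since Reynolds' transport theorem gives $\nabla H(0) = \int_{\partial B_d(0)}|y|^{-\alpha}\nu(y)\,\mathrm{d}\sigma(y) = 0$ by spherical symmetry. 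Combining all contributions and using antisymmetry once more to reinsert $w(x)$ recovers \eqref{eq:riesz_grad}. The chief technical obstacle is precisely this identification step: since $2s>1$, the formal derivative kernel $(x-z)/|x-z|^{n+2s}$ fails to be locally integrable near $z=x$, so one cannot differentiate under the integral sign directly, and the symmetry-plus-Reynolds argument on the $w(x)$ piece is essential to extract the derivative without assuming additional regularity of $w$.
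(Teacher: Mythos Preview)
Your argument is correct and follows the same overall strategy as the paper: split into an inner ball $B_{\delta(x)/2}(x)$ and its complement, handle the inner part by subtracting $w(x)$ and invoking \eqref{eq:weighted_holder_w_b}, and identify the derivative via a difference-quotient argument in which the contribution from the constant $w(x)$ piece vanishes by symmetry (your Reynolds-transport/evenness observation for $H(v)=\int_{B_d(v)}|y|^{-\alpha}\,\mathrm{d}y$ is precisely the paper's computation that its term $I_{2,1}$ is $O(|x-y|^2)$). The one genuine technical departure is your treatment of the complement $\Omega\setminus B_d(x)$: you control $\int\delta(z)^{s-1}|x-z|^{-(n+2s-1)}\,\mathrm{d}z$ in a single step via the Lipschitz tubular-neighborhood bound $|\{\delta<\tau\}\cap B_r|\lesssim r^{n-1}\tau$ combined with dyadic annuli, whereas the paper introduces a third region $R_1=\{z:\delta(z)\le\delta/2\}$ between the inner ball $R_2$ and the far interior $R_3$, and estimates the near-boundary integral over $R_1$ by a bi-Lipschitz flattening to the half-space $\mathbb{R}^n_+$, while on $R_3$ it simply uses $\delta(z)\gtrsim\delta$. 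The paper's three-region split is slightly sharper (it yields $\delta^{1-s}$ on $R_1$ instead of your global $\delta^{-s}$), which is immaterial for the present lemma since $R_3$ already contributes $\delta^{-s}$; the same decomposition is, however, reused verbatim in Lemma~\ref{lem:riesz-gradient-Holder}. Your two-region argument is more self-contained and avoids the half-space computation, at the cost of not isolating the sharper near-boundary behavior.
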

\begin{proof}
We denote $\alpha := 2 - 2s$ and the kernel $K(x,z) := \frac{1}{\gamma(\alpha)|x-z|^{n-\alpha}}$. For any fixed $x \in \Omega$, let $\delta := \delta(x)$.
It is straightforward to verify that the kernel satisfies the following Lipschitz-type estimate:
\begin{equation}\label{eq:kernel-Lipschitz}
|K(x,z) - K(y,z)| \leq C(n,\alpha)
\frac{|x-y|}{\min\{|x-z|, |y-z|\}^{n-\alpha+1}} \quad \forall x, y, z \in \Omega. 
\end{equation}
Since the singularity of $I_{\alpha}w(x)$ arises both at point $x$ and near the boundary $\partial\Omega$,  
the integration domain is decomposed into three regions (see, e.g., the similar decomposition in \cite{han2022monotone}):
\begin{equation} \label{eq:domain-decomposition}
R_1 := \{z \in \Omega : \delta(z) \leq \tfrac{\delta}{2}\}, \quad R_2 := B_{\frac{\delta}{2}}(x), \quad R_3 := \Omega \setminus (R_1 \cup R_2).
\end{equation}
Note that for any $z \in R_2$, we have $\delta(z) \eqsim \delta \eqsim \delta(x,z)$, since
\begin{equation*}
\delta(z) = \mathrm{dist}(z, \partial \Omega) \ge \tfrac{1}{2}\delta, \quad \text{and}
\quad
\delta = |x^* - x| \ge |x^* - z| - |x - z| \ge \delta(z) - \tfrac{1}{2}\delta,
\end{equation*}
where $x^*$ denotes the boundary point realizing $\delta = \mathrm{dist}(x, \partial \Omega)$.

Next, for any $y \in B_{\frac{\delta}{4}}(x)$, we estimate the difference 
$I_\alpha w(x) - I_\alpha w(y)$ by decomposing the integration domain into 
the three regions defined above. 

\fbox{Case 1: $z \in R_1$.} The singularity originates from the factor
  $\delta(z)$, and moreover $|x - z| \eqsim |y - z|$. Hence,
\begin{align*}
& \quad \left|
\int_{R_1} \big(K(x,z) - K(y,z) \big) w(z) \mathrm{d}z\right| \\
&\le C(\alpha, n) |x - y| 
\int_{R_1} \frac{|w(z)|}{|x - z|^{n - \alpha + 1}} \mathrm{d}z \\
&\le C(\Omega, s, \beta) \|f\|_{C^\beta(\overline{\Omega})} |x-y|
\int_{R_1} \frac{\delta(z)^{s - 1}}{|x - z|^{n - \alpha + 1}} \, \mathrm{d}z.
\end{align*}
To estimate the last integral term, we first consider the case in which
$\Omega = \mathbb{R}^n_+ := \{(z', z_n): z' \in \mathbb{R}^{n-1},\, z_n > 0\}$.  
Without loss of generality, let $x = (0,\ldots,0,\delta)$ and 
$R_1 = \mathbb{R}^{n-1} \times (0, C_1 \delta)$ for some $0 < C_1 < 1$.  
Then, we have
\begin{equation*}
\begin{aligned}
& \quad \int_{R_1} \frac{\delta(z)^{s-1}}{|x - z|^{n-\alpha+1}}\mathrm{d}z \\
&= \int_0^{C_1\delta} \int_{\mathbb{R}^{n-1}} z_n^{s-1} |x - z|^{\alpha - n - 1} \mathrm{d}z'\mathrm{d}z_n \\
&= |\mathbb{S}^{n-1}| \int_0^{C_1\delta} z_n^{s-1} \int_0^{\infty} \big((\delta-z_n)^2 + r^2\big)^{\frac{\alpha - n - 1}{2}} r^{n-1}\mathrm{d}r \mathrm{d}z_n \\
&\leq C(n) \int_0^{C_1\delta} z_n^{s-1} 
\Big( \int_0^{\delta - z_n} (\delta - z_n)^{\alpha - n - 1} r^{n-1}\,\mathrm{d}r 
+ \int_{\delta - z_n}^{\infty} r^{\alpha - 2}\,\mathrm{d}r \Big) \mathrm{d}z_n \\
&\leq C(n,\alpha) \int_0^{C_1\delta} z_n^{s-1} (\delta - z_n)^{\alpha - 1} \mathrm{d}z_n.
\end{aligned}
\end{equation*}
Here, $\mathbb{S}^{n-1}$ denotes the unit sphere in $\mathbb{R}^n$.
Since $\alpha = 2 - 2s < 1$, the integral $\int_{\delta - z_n}^{\infty} r^{\alpha - 2}\mathrm{d}r$ is convergent. Performing the transformation $t = z_n / \delta$ yields
\[
\int_0^{C_1\delta} z_n^{s-1} (\delta - z_n)^{\alpha -1} \mathrm{d}z_n
= \delta^{s+\alpha -1}\int_0^{C_1} t^{s-1}(1-t)^{\alpha -1} \mathrm{d}t
\lesssim \delta^{1-s}.
\]
For a general bounded Lipschitz domain $\Omega$, the same estimate follows by a standard partition-of-unity argument combined with a bi-Lipschitz coordinate transformation.  
Consequently, we obtain
\begin{equation}\label{eq:R1}
\left|
\int_{R_1} \big(K(x,z) - K(y,z)\big) w(z)\mathrm{d}z
\right|
\le C(\Omega, s, \beta) \|f\|_{C^\beta(\overline{\Omega})} \delta^{1-s} |x-y|.
\end{equation}

\fbox{Case 2: $z \in R_2$.} We decompose the integral into two parts:
\[
\begin{aligned}
& \int_{R_2} \big(K(x,z) - K(y,z)\big) w(z)\mathrm{d}z \\
=~& w(x) \underbrace{\int_{R_2}  K(x,z) - K(y,z) \mathrm{d}z}_{=: I_{2,1}}
+ \underbrace{\int_{R_2} \big(K(x,z) - K(y,z)\big) \big(w(z) - w(x)\big)\mathrm{d}z}_{=: I_{2,2}}.
\end{aligned}
\]
For the first term $I_{2,1}$, the symmetry of the kernel implies
\[
\begin{aligned}
I_{2,1}
&= \frac{1}{\gamma(\alpha)}
\left(
\int_{B_{\frac{\delta}{2}}(x)} \frac{1}{|x-z|^{n-\alpha}}\mathrm{d}z
- \int_{B_{\frac{\delta}{2}}(x)} \frac{1}{|y-z|^{n-\alpha}}\mathrm{d}z
\right) \\
&= \frac{1}{\gamma(\alpha)}
\left(
\int_{B_{\frac{\delta}{2}}(x)} 
- \int_{B_{\frac{\delta}{2}}(y)}
\right)
\frac{1}{|x-z|^{n-\alpha}}\mathrm{d}z \\
&= \frac{1}{\gamma(\alpha)} 
\int_{B_{\frac{\delta}{2}}(x) \setminus B_{\frac{\delta}{2}}(y)}
\frac{1}{|x-z|^{n-\alpha}} - \frac{1}{|y-z|^{n-\alpha}}\mathrm{d}z.
\end{aligned}
\]
\noindent
Since $|x - y| < \frac{\delta}{4}$, for any 
$z \in B_{\frac{\delta}{2}}(x) \setminus B_{\frac{\delta}{2}}(y)$, it holds that $\min\{|z - x|,\, |z - y|\} \ge \frac{\delta}{4}$.
Moreover, the measure of this region satisfies $|B_{\frac{\delta}{2}}(x) \setminus B_{\frac{\delta}{2}}(y)| 
\le C(n)\, \delta^{n-1} |x - y|$. Consequently, in light of \eqref{eq:kernel-Lipschitz} and \eqref{eq:weighted_holder_w_a}, we have $|I_{2,1}| \le C(n, s) \delta^{\alpha-2} |x - y|^2$ and hence
\begin{equation} \label{eq:R2-I1}
\left|w(x) \int_{R_2}  K(x,z) - K(y,z) \mathrm{d}z \right| \le C(\Omega, s, \beta) \delta^{-s-1} \|f\|_{C^\beta(\overline{\Omega})} |x-y|^2.
\end{equation}

For the second term $I_{2,2}$, we apply the weighted H\"{o}lder estimate \eqref{eq:weighted_holder_w_b} to obtain
$$
\begin{aligned}
|I_{2,2}| & \leq C(\Omega, s, \beta) \|f\|_{C^\beta(\overline{\Omega})} \delta^{-\beta -s} \int_{B_{\frac{\delta}{2}}(x)} |x-z|^{\beta+2s-1}|K(x,z) - K(y,z)| \mathrm{d}z.
\end{aligned}
$$
Based on the distinct singular behaviors of the kernel in different regions, we divide the ball $B_{\frac{\delta}{2}}(x)$ into two parts:
$B_{2|x-y|}(x)$ and $B_{\frac{\delta}{2}}(x) \setminus B_{2|x-y|}(x)$. First, we have
$$
\begin{aligned}
& \int_{B_{2|x-y|}(x)} |x-z|^{\beta + 2s - 1} |K(x,z) - K(y,z)| \mathrm{d}z \\
\lesssim & \int_{B_{2|x-y|}(x)} |x-z|^{\beta + 2s - 1} K(x,z) \mathrm{d}z  + |x-y|^{\beta + 2s - 1} \int_{B_{2|x-y|}(x)}  K(y,z) \mathrm{d}z \\
\lesssim & \int_{B_{2|x-y|}(x)} |x-z|^{\beta + 1 - n} \mathrm{d}z + |x-y|^{\beta + 2s - 1} \int_{B_{3|x-y|}(y)} |y-z|^{2-2s-n} \mathrm{d}z \\
\lesssim & \int_0^{2|x-y|} r^{\beta} \mathrm{d}r + |x-y|^{\beta + 2s - 1} \int_0^{3|x-y|} r^{1-2s} \mathrm{d}r \leq C(s) |x-y|^{\beta + 1}.
\end{aligned}
$$ 
Next, noting that for $z \in B_{\frac{\delta}{2}}(x) \setminus B_{2|x-y|}(x)$, we have $2|y-z| \ge |y-z| + |x-z| - |y-x| \ge |x-z|$. It then follows from \eqref{eq:kernel-Lipschitz} that
$$
\begin{aligned}
& \int_{B_{\frac{\delta}{2}}(x) \setminus B_{2|x-y|}(x)} |x-z|^{\beta + 2s - 1} |K(x,z) - K(y,z)| \mathrm{d}z \\
\lesssim & \int_{B_{\frac{\delta}{2}}(x) \setminus B_{2|x-y|}(x)}  |x-z|^{\beta+2s-1} \frac{|x-y|}{|x-z|^{n-\alpha+1}}  \mathrm{d}z  \\
\lesssim &\, |x-y|\int_{2|x-y|}^{\frac{\delta}{2}} r^{\beta -1} \mathrm{d}r \leq C(s,\beta) \delta^{\beta}|x-y|.
\end{aligned}
$$ 
It should be emphasized that the above bound is valid for any $\beta>0$.

Substituting the above two estimates into the bound for $I_{2,2}$ yields
\begin{equation} \label{eq:R2-I2}
\left|  \int_{R_2} \big(K(x,z) - K(y,z)\big) \big(w(z) - w(x)\big) \mathrm{d}z \right| \leq C(\Omega,s,\beta)\|f\|_{C^\beta(\overline{\Omega})} \delta^{-s} |x-y|.
\end{equation}

\fbox{Case 3: $z \in R_3$.} In this case, the integral is nonsingular, and it holds that
$\delta(z) \gtrsim \delta$ and $\min{|x-z|, |y-z|} \eqsim |x-z|$.
Therefore, by applying \eqref{eq:kernel-Lipschitz} and \eqref{eq:weighted_holder_w_a}, we obtain
\begin{equation} \label{eq:R3}
\begin{aligned}
& \quad \left|\int_{R_3} (K(x,z) - K(y,z))w(z) \mathrm{d}z\right| \\
&\leq C(\Omega,s,\beta)\|f\|_{C^\beta(\overline{\Omega})} \int_{R_3} \delta(z)^{s-1} \frac{|x-y|}{|x-z|^{n-\alpha+1}} \mathrm{d}z \\
& \leq C(\Omega,s,\beta)\|f\|_{C^\beta(\overline{\Omega})} \delta^{s-1}|x-y|\int_{\frac{\delta}{2}}^\infty  r^{-2s} \mathrm{d} r \\
& \leq C(\Omega,s,\beta)\|f\|_{C^\beta(\overline{\Omega})} \delta^{-s}|x-y|. \\
\end{aligned}
\end{equation}

Note that the kernel $K(x,z)$ is smooth for $z \neq x$, using the techniques from Case 2, it is straightforward to see that the truncated integral
\[
\begin{aligned}
& \quad \int_{\Omega \setminus B_\varepsilon(x)} \nabla_x K(x,z)w(z) \mathrm{d}z \\
&= w(x) \int_{\Omega \setminus B_\varepsilon(x)} \nabla_x K(x,z) \mathrm{d}z 
+ \int_{\Omega \setminus B_\varepsilon(x)} \nabla_x K(x,z)\, \big(w(z) - w(x)\big) \mathrm{d}z
\end{aligned}
\]  
admits a well-defined limit as $\varepsilon \to 0^+$, i.e., the Cauchy principal value integral on the right-hand side of \eqref{eq:riesz_grad}.
This is because the integral of the first term over the ball vanishes due to the symmetry of the kernel, while the second term can be estimated using the weighted H\"older estimate \eqref{eq:weighted_holder_w_b}: as $\varepsilon \to 0^+$,
\[
\Big|\int_{B_\varepsilon(x)} \nabla_x K(x,z)\, \big(w(z) - w(x)\big)\mathrm{d}z \Big|
\lesssim \delta^{-\beta - s} \int_0^{\varepsilon} r^{\beta - 1} \mathrm{d} r
\lesssim \delta^{-\beta - s} \varepsilon^\beta \to 0.
\]
Combining this with the estimates in \eqref{eq:R1}, \eqref{eq:R2-I1}, \eqref{eq:R2-I2}, and \eqref{eq:R3}, and applying the dominated convergence theorem, one immediately obtains the gradient representation \eqref{eq:riesz_grad} as well as the pointwise bound \eqref{eq:riesz_grad_est}.
\end{proof}

By using the analysis techniques developed in the above lemma, we can also obtain a weighted H\"{o}lder estimate for $\nabla I_{2-2s} w$.

\begin{lemma}[weighted H\"{o}lder estimate of $\nabla I_{2-2s}w$]  \label{lem:riesz-gradient-Holder}
Let $w$ satisfy the weighted Hölder estimates in \eqref{eq:weighted_holder_w_all}.  
For any $y \in B_{\frac{\delta(x)}{4}}(x)$, we have 
\begin{equation} \label{eq:riesz-gradient-Holder}
\left|\nabla I_{2-2s}w (x) - \nabla I_{2-2s}w (y) \right| \leq C(\Omega, s, \beta) \|f\|_{C^\beta(\overline{\Omega})} \delta(x)^{-s-\beta} |x-y|^{\beta}.
\end{equation}
\end{lemma}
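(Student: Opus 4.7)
The strategy closely parallels the proof of Lemma~\ref{lem:riesz_diff}: starting from the principal value representation \eqref{eq:riesz_grad} we write
\[
\nabla I_{2-2s}w(x)-\nabla I_{2-2s}w(y)=\tilde{C}_{n,s}\,\mathrm{P.V.}\!\int_\Omega\bigl(\tilde K(x,z)-\tilde K(y,z)\bigr)w(z)\,\mathrm{d}z,\quad \tilde K(x,z):=\frac{x-z}{|x-z|^{n+2s}},
\]
set $\delta:=\delta(x)$ (so $\delta(y)\eqsim\delta$ since $|x-y|\le\delta/4$), and decompose $\Omega=R_1\cup R_2\cup R_3$ with $R_2=B_{\delta/2}(x)$ as in \eqref{eq:domain-decomposition}. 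Away from the singularities the Lipschitz-type bound $|\tilde K(x,z)-\tilde K(y,z)|\lesssim|x-y|/\min\{|x-z|,|y-z|\}^{n+2s}$ is available, and the target in each region is $\delta^{-s-\beta}|x-y|^\beta$, for which we will repeatedly invoke the interpolation $|x-y|\le(\delta/4)^{1-\beta}|x-y|^\beta$ to upgrade Lipschitz gains into H\"older gains.

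On $R_1$ and $R_3$ both kernels are smooth, so we pair the Lipschitz bound with $|w(z)|\lesssim\delta(z)^{s-1}$ from \eqref{eq:weighted_holder_w_a}. The $R_1$ integral reduces, via a half-space change of variables analogous to Case~1 of Lemma~\ref{lem:riesz_diff}, to a weight of order $\delta^{-s-1}$, while $R_3$ produces the same factor using $|x-z|\gtrsim\delta$; together these yield $\delta^{-s-1}|x-y|$, converted to $\delta^{-s-\beta}|x-y|^\beta$ by interpolation. On $R_2$ we factor $w(x)$ out, writing the integrand as $w(x)\,I_{2,1}+I_{2,2}$ with $I_{2,1}:=\mathrm{P.V.}\!\int_{R_2}(\tilde K(x,z)-\tilde K(y,z))\,\mathrm{d}z$. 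Radial symmetry at $x$ gives $\mathrm{P.V.}\!\int_{R_2}\tilde K(x,z)\,\mathrm{d}z=0$, and the remaining $\int_{R_2}\tilde K(y,z)\,\mathrm{d}z$ becomes, after a change of variables, a vector integral $\int_{B_{\delta/2}(x-y)}w'/|w'|^{n+2s}\,\mathrm{d}w'$, whose norm we bound via the symmetric difference with $B_{\delta/2}(0)$: the measure of the difference is $\lesssim\delta^{n-1}|x-y|$ and the integrand is $\lesssim\delta^{1-n-2s}$ there, so $|I_{2,1}|\lesssim|x-y|\delta^{-2s}$ and hence $|w(x)I_{2,1}|\lesssim\delta^{-s-1}|x-y|$. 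For $I_{2,2}$ we split $R_2=A\cup(R_2\setminus A)$ with $A:=B_{2|x-y|}(x)$: on $R_2\setminus A$ the Lipschitz bound on $\tilde K$ combined with $|w(z)-w(x)|\lesssim\delta^{-\beta-s}|x-z|^{\beta+2s-1}$ from \eqref{eq:weighted_holder_w_b} produces a radial integral $\int_{2|x-y|}^{\delta/2}r^{\beta-2}\,\mathrm{d}r\lesssim|x-y|^{\beta-1}$ (using $\beta<1$); on $A$ the triangle inequality $|\tilde K(x,\cdot)-\tilde K(y,\cdot)|\le|\tilde K(x,\cdot)|+|\tilde K(y,\cdot)|$ paired with the same H\"older estimate and $A\subset B_{3|x-y|}(y)$ yields $|x-y|^\beta+|x-y|^{\beta+1}$. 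Multiplied by $\delta^{-\beta-s}$, all contributions match the claimed bound.

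The main obstacle is the cancellation estimate $|I_{2,1}|\lesssim|x-y|\delta^{-2s}$: unlike the scalar kernel $K$ in Lemma~\ref{lem:riesz_diff}, where translation symmetry gave a transparent cancellation, the odd vector kernel $\tilde K(y,\cdot)$ integrated over a ball not centered at $y$ does not vanish and must be handled by a direct symmetric-difference argument exploiting oddness at the origin. Once this is set up, the remainder of the proof reduces to integrating explicit power functions and carefully tracking the exponents in $\delta$ and $|x-y|$, with the trivial interpolation doing the final conversion from linear gains to H\"older gains of order $\beta$.
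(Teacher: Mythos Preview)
Your overall strategy is sound and matches the paper's, but there is a genuine gap in the treatment of $I_{2,2}$ on the inner ball $A=B_{2|x-y|}(x)$. You propose to bound
\[
\int_A \bigl|\tilde K(x,z)-\tilde K(y,z)\bigr|\,|w(z)-w(x)|\,\mathrm{d}z
\]
via the triangle inequality $|\tilde K(x,\cdot)|+|\tilde K(y,\cdot)|$. The $\tilde K(x,z)$ piece is fine, but the $\tilde K(y,z)$ piece is not: near $z=y$ one has $|\tilde K(y,z)|\eqsim|y-z|^{1-n-2s}$ while $|w(z)-w(x)|\to|w(y)-w(x)|\neq 0$, so the product behaves like $|y-z|^{1-n-2s}$, and $\int_0^{\text{small}} r^{-2s}\,\mathrm{d}r=\infty$ since $s>\tfrac12$. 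Thus the triangle-inequality bound you invoke is divergent, and the claimed estimate ``yields $|x-y|^\beta+|x-y|^{\beta+1}$'' is unsupported. The issue is that subtracting the \emph{same} value $w(x)$ for both kernels only cancels the singularity at $z=x$, not the one at $z=y$.

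The fix, which is precisely what the paper does, is to subtract $w(x)$ for the $\tilde K(x,\cdot)$ integral and $w(y)$ for the $\tilde K(y,\cdot)$ integral. This produces two absolutely convergent pieces of the form $\int_A \tilde K(x,z)(w(z)-w(x))\,\mathrm{d}z$ and $\int_A \tilde K(y,z)(w(z)-w(y))\,\mathrm{d}z$ (each $\lesssim \delta^{-\beta-s}|x-y|^\beta$ by \eqref{eq:weighted_holder_w_b}), together with an extra cross term $(w(y)-w(x))\int \tilde K(y,z)\,\mathrm{d}z$ that is handled by exactly the symmetric-difference cancellation you already set up for $I_{2,1}$. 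On the annulus $R_2\setminus A$ you then need a further splitting $(\tilde K(x,z)-\tilde K(y,z))(w(z)-w(x))+\tilde K(y,z)(w(y)-w(x))$ to apply the Lipschitz bound on the kernel; your proposed direct product $|x-y|\cdot|x-z|^{\beta-n-1}$ works for the first summand but the second contributes an additional $\delta^{-\beta-s}|x-y|^{\beta+2s-1}\int_{2|x-y|}^{\delta/2} r^{-2s}\,\mathrm{d}r\lesssim\delta^{-\beta-s}|x-y|^\beta$. With these adjustments the argument goes through; the remaining parts of your plan ($R_1$, $R_3$, $I_{2,1}$, and the interpolation $|x-y|\le C\delta^{1-\beta}|x-y|^\beta$) are correct.
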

\begin{proof}
We continue to use the notation introduced in Lemma \ref{lem:riesz_diff}, where $\alpha := 2 - 2s$ and $\delta := \delta(x)$. 
Denote
\[
\tilde{K}(x,z) := \nabla_x K(x,z) = \tilde{C}_{n,s} \frac{(x-z)}{|x-z|^{n-\alpha+2}}.
\]
It is straightforward to verify that
\begin{equation} \label{eq:tildeK-Lipschitz}
|\tilde{K}(x,z) - \tilde{K}(y,z)| \le C(n,s)\,
\frac{|x-y|}{\min\{|x-z|, |y-z|\}^{n-\alpha+2}}.
\end{equation}

The proof follows similar lines to that of Lemma \ref{lem:riesz_diff}, with several technical modifications.
We consider the difference $\nabla I_{2-2s}w(x) - \nabla I_{2-2s}w(y)$ over the different integration regions. 
The only minor modification is that, in the domain decomposition \eqref{eq:domain-decomposition}, 
the set $R_2$ is adjusted by removing $B_\varepsilon(x)$ (and $B_\varepsilon(y)$ for the part associated with $y$), 
for sufficiently small $\varepsilon>0$.

\fbox{Case 1}: For $z \in R_1$, applying \eqref{eq:tildeK-Lipschitz} yields
\begin{equation} \label{eq:grad-R1}
\begin{aligned}
& \left|
\int_{R_1} \big(\tilde{K}(x,z) - \tilde{K}(y,z) \big) w(z) \mathrm{d}z\right|  \\
\le&~  C(\Omega, s, \beta) \|f\|_{C^\beta(\overline{\Omega})} |x-y|
\int_{R_1} \frac{\delta(z)^{s - 1}}{|x - z|^{n - \alpha + 2}} \mathrm{d}z \\
\le&~ C(\Omega, s, \beta) \|f\|_{C^\beta(\overline{\Omega})} \delta^{-s} |x-y|.
\end{aligned}
\end{equation} 

\fbox{Case 2}: For any $\varepsilon < \frac{|x-y|}{2}$, it holds that 
$$
\begin{aligned}
& \quad \int_{B_{\frac{\delta}{2}}(x) \setminus B_\varepsilon(x)} \tilde{K}(x,z)w(z) \mathrm{d}z - 
\int_{B_{\frac{\delta}{2}}(x) \setminus B_\varepsilon(y)} \tilde{K}(y,z)w(z) \mathrm{d}z \\
= &~ \int_{B_{\frac{\delta}{2}}(x) \setminus B_\varepsilon(x)} \tilde{K}(x,z) \big(w(z) - w(x)\big) \mathrm{d}z 
- \int_{B_{\frac{\delta}{2}}(x) \setminus B_\varepsilon(y)} \tilde{K}(y,z) \big(w(z) - w(y)\big) \mathrm{d}z \\
& + w(y) \left( \int_{B_{\frac{\delta}{2}}(y) \setminus B_\varepsilon(y)} - \int_{B_{\frac{\delta}{2}}(x) \setminus B_\varepsilon(y)}\right) \tilde{K}(y,z) \mathrm{d}z \\
= &~ \int_{B_{2|x-y|}(x) \setminus B_\varepsilon(x)} \tilde{K}(x,z) \big(w(z) - w(x)\big) \mathrm{d}z \\
   &~ - \int_{B_{2|x-y|}(x) \setminus B_\varepsilon(y)} \tilde{K}(y,z) \big(w(z) - w(y)\big) \mathrm{d}z \\
   & + \int_{B_{\frac{\delta}{2}}(x) \setminus B_{2|x-y|}(x)} \tilde{K}(x,z) \big(w(z) - w(x)\big) - \tilde{K}(y,z) \big(w(z) - w(y)\big) \mathrm{d}z \\
   & + w(y) \left( \int_{B_{\frac{\delta}{2}}(y) \setminus B_\varepsilon(y)} - \int_{B_{\frac{\delta}{2}}(x) \setminus B_\varepsilon(y)}\right) \tilde{K}(y,z) \mathrm{d}z := J_1 + J_2 + J_3 +J_4.
\end{aligned}
$$ 
\begin{itemize}
\item Using \eqref{eq:weighted_holder_w_a}, we obtain
$$
\begin{aligned}
|J_1| & \leq C(\Omega,s,\beta)\|f\|_{C^\beta(\overline{\Omega})} \delta^{-\beta-s} \int_{B_{2|x-y|}(x) \setminus B_\varepsilon(x) }  |x-z|^{\beta - n} \mathrm{d}z\\
& \leq C(\Omega,s,\beta)\|f\|_{C^\beta(\overline{\Omega})} \delta^{-\beta-s} |x-y|^{\beta}.
\end{aligned}
$$ 
The same bound applies to $J_2$, as its estimate follows identically.

\item For $z \in B_{\frac{\delta}{2}}(x) \setminus B_{2|x-y|}(x)$, we have
$$
\begin{aligned}
& \left|\tilde{K}(x,z) \big(w(z) - w(x)\big) - \tilde{K}(y,z) \big(w(z) - w(y)\big)\right| \\
\leq &~ |\tilde{K}(x,z) - \tilde{K}(y,z)| \cdot |w(z) - w(x)| + |\tilde{K}(y,z)|\cdot |w(x) - w(y)| \\
\leq &~ C(\Omega,s,\beta)\|f\|_{C^\beta(\overline{\Omega})} \delta^{-\beta-s} \big( |x-y| |x-z|^{\beta -1 -n}  \\
&~\qquad \qquad \qquad \qquad\qquad+  |x-y|^{\beta + 2s -1}  |x-z|^{1-2s-n}\big).
\end{aligned}
$$ 
Therefore, 
$$
\begin{aligned}
|J_3| & \leq C(\Omega,s,\beta)\|f\|_{C^\beta(\overline{\Omega})}\delta^{-\beta-s} \Big(|x-y| \int_{2|x-y|}^{\frac{\delta}{2}} r^{\beta -2} \mathrm{d}r \\
& \qquad\qquad\qquad\qquad\qquad + |x-y|^{\beta + 2s -1} \int_{2|x-y|}^{\frac{\delta}{2}} r^{-2s}\mathrm{d}r \Big) \\
& \leq C(\Omega,s,\beta)\|f\|_{C^\beta(\overline{\Omega})}\delta^{-\beta-s} |x-y|^{\beta}.
\end{aligned}
$$ 
Here, we use the fact that $\beta -2 < -1$ and $-2s < -1$.

\item By again using the estimate 
$|B_{\frac{\delta}{2}}(y) \setminus B_{\frac{\delta}{2}}(x)| \le C(n)\delta^{n-1}|x-y|$ 
and applying \eqref{eq:weighted_holder_w_a}, we obtain $|J_4| \leq C(\Omega,s,\beta)\|f\|_{C^\beta(\overline{\Omega})} \delta^{-s-1}|x-y|$.
\end{itemize}

Combining the estimates for $J_1$–$J_4$, we arrive at the following bound,
which is uniform with respect to $\varepsilon$:
\begin{equation} \label{eq:grad-R2}
\begin{aligned}
& \left|\int_{B_{\frac{\delta}{2}}(x) \setminus B_\varepsilon(x)} \tilde{K}(x,z)w(z) \mathrm{d}z - 
\int_{B_{\frac{\delta}{2}}(x) \setminus B_\varepsilon(y)} \tilde{K}(y,z)w(z) \mathrm{d}z\right| \\
\leq &~ C(\Omega,s,\beta)\|f\|_{C^\beta(\overline{\Omega})}\delta^{-\beta-s} |x-y|^{\beta}.
\end{aligned}
\end{equation}

\fbox{Case 3}: For $z \in R_3$, the argument is similar to that of Lemma \ref{lem:riesz_diff}, which yields
\begin{equation} \label{eq:grad-R3}
 \left|\int_{R_3} \big(\tilde{K}(x,z) - \tilde{K}(y,z) \big)w(z) \mathrm{d}z\right| 
 \leq C(\Omega,s,\beta)\|f\|_{C^\beta(\overline{\Omega})} \delta^{-s-1}|x-y|.
\end{equation}

Combining the estimates \eqref{eq:grad-R1}, \eqref{eq:grad-R2}, and \eqref{eq:grad-R3}, 
and letting $\varepsilon \to 0^+$, we conclude the desired result \eqref{eq:riesz-gradient-Holder}.
\end{proof}

\subsection{Weighted Sobolev regularity of $\bp$}

Since $w = \partial_i u$, it follows that $p_i = I_{2-2s}w$.  
Based on the pointwise estimate for $\nabla I_{2-2s}w$ (Lemma \ref{lem:riesz_diff}) and its weighted Hölder regularity (Lemma \ref{lem:riesz-gradient-Holder}), 
we proceed by adopting the analytical framework developed in \cite{acosta2017fractional}.  
For this purpose, we define the subsets
\begin{equation*}
A := \{(x,y) \in \Omega \times \Omega : \delta(x,y) = \delta(x)\}, \quad 
B := \{(x,y) \in A : |x-y| \ge \tfrac{1}{4}\delta(x)\}.
\end{equation*}
We also recall a useful identity concerning the integrability of powers of the distance function $\delta(x)$; see, for instance, \cite[Remark 3.5]{acosta2017fractional}:
\begin{equation} \label{eq:delta-integral}
\int_{\Omega} \delta(x)^{-\alpha} \mathrm{d}x = \mathcal{O}(\frac{1}{1-\alpha}) \quad \forall \alpha < 1.
\end{equation}

\begin{theorem}[weighted Sobolev regularity of $\bp$]\label{thm:weighted_sobolev-p}
	Let $\Omega$ be a bounded, Lipschitz domain satisfying the exterior ball condition, $s\in(\frac12,1)$, $f\in C^\beta(\overline{\Omega})$ for some $\beta  \in(0,2-2s), \gamma \geq0$, $t< \min\{ \beta + 1, \gamma + \frac{3}{2}-s\}$, and $u $ be the solution of (\ref{eq:primal_fractional_problem}). Then, it holds that $\bp := I_{2-2s}\nabla u \in \boldsymbol{H}_{\gamma}^t(\Omega)$ and 
\begin{equation} \label{eq:weighted-regularity-p}
\|\bp\|_{H_\gamma^t(\Omega)} \leq 
\dfrac{C(\Omega, s, \beta)}{\sqrt{(\beta+1-t)\left(\tfrac{3}{2}-s+\gamma-t\right)|t-1|}}\|f\|_{C^\beta(\overline{\Omega})}.
\end{equation}
\end{theorem}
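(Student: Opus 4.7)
My plan is to apply Lemmas \ref{lem:riesz_diff} and \ref{lem:riesz-gradient-Holder} componentwise to $p_i = I_{2-2s}(\partial_i u)$ and to mimic the weighted integration framework used in \cite{acosta2017fractional} for Theorem \ref{thm:weighted_sobolev_estimate}. Since the admissibility condition forces $t < \beta + 1 < 3 - 2s < 2$, only $k \in \{0, 1\}$ arise in the decomposition $t = k + t'$ with $t' \in (0, 1)$. The components $\|\bp\|_{L^2_\gamma}$, $\|\nabla\bp\|_{L^2_\gamma}$ (when $k = 1$), and the fractional seminorm $|\partial^k\bp|^2_{H^{t'}_\gamma}$ are treated separately.

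\textbf{Preliminary bound.} A preparatory step is to upgrade the pointwise estimate \eqref{eq:riesz_grad_est} to a global bound $\|\bp\|_{L^\infty(\Omega)} \lesssim \|f\|_{C^\beta(\overline{\Omega})}$. At any fixed interior bulk point $x_0$ (with $\delta(x_0) \gtrsim 1$), the value $|\bp(x_0)|$ is controlled by splitting \eqref{eq:Riesz} inside and outside a ball around $x_0$ and using $\int_\Omega \delta^{s-1}\,\mathrm{d}z < \infty$; integrating $|\nabla\bp| \lesssim \delta^{-s}$ (with $s < 1$) along an appropriate path from $x_0$ to any $x \in \Omega$ extends the bound globally. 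This immediately yields $\|\bp\|_{L^2_\gamma}^2 \lesssim 1$ and, in the case $k = 1$, $\|\nabla\bp\|^2_{L^2_\gamma} \lesssim \int_\Omega \delta^{2\gamma - 2s}\,\mathrm{d}x$, which is finite by \eqref{eq:delta-integral} exactly when $\gamma > s - \tfrac12$, a condition forced by $t > 1$ together with $t < \gamma + \tfrac32 - s$.

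\textbf{Seminorm decomposition.} For the fractional seminorm, I reduce the integration by symmetry to $A := \{(x, y): \delta(x) \leq \delta(y)\}$ and split $A = (A \cap B) \cup (A \cap B^c)$ with $B := \{|x-y| < \delta(x)/4\}$. On $A \cap B$, I invoke the weighted H\"older estimate \eqref{eq:riesz-gradient-Holder} for $k = 1$ and the mean-value theorem combined with \eqref{eq:riesz_grad_est} on $[x, y] \subset B_{\delta(x)/4}(x)$ (where $\delta \geq 3\delta(x)/4$) for $k = 0$; integrating in polar coordinates on $|x-y| \in (0, \delta(x)/4)$ and then applying \eqref{eq:delta-integral} in $x$ produces the factors $(\beta+1-t)^{-1}$ (from the polar convergence condition $t' < \beta$) and $(\tfrac32 - s + \gamma - t)^{-1}$ (from the outer weighted integral). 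On $A \cap B^c$, I use the crude bound $|\nabla\bp(x) - \nabla\bp(y)| \lesssim \delta(x)^{-s}$ (for $k = 1$, using $\delta(y) \geq \delta(x)$) or the global H\"older estimate $|\bp(x) - \bp(y)| \lesssim |x-y|^{1-s}$ (for $k = 0$, derived from a two-hop bulk-detour path integration of $|\nabla\bp| \lesssim \delta^{-s}$); polar integration over $|x-y| \geq \delta(x)/4$ and a second application of \eqref{eq:delta-integral} contribute the factor $|t-1|^{-1}$ and another copy of $(\tfrac32 - s + \gamma - t)^{-1}$.

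\textbf{Main obstacle.} The principal technical difficulty is organising the far-point region $A \cap B^c$ so that the three blowup factors $(\beta+1-t)$, $(\tfrac32 - s + \gamma - t)$, and $|t-1|$ assemble into the single square-root form of \eqref{eq:weighted-regularity-p}. In particular, for $k = 0$ the far-field integral $\int_{\delta(x)/4}^D r^{1-2t-2s}\,\mathrm{d}r$ changes convergence regime near $t = 1 - s$, so an auxiliary case split at $|x-y| \sim 1$ that combines the $L^\infty$ and $C^{0, 1-s}$ bounds on $\bp$ is required to keep the constant uniform in the admissible range; the individually non-optimal constants from each sub-region are then absorbed into the square-root bound after elementary AM-GM-type manipulations.
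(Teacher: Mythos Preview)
Your overall framework---splitting by $k\in\{0,1\}$, the near/far decomposition $|x-y|\lessgtr\delta(x)/4$, and invoking Lemmas~\ref{lem:riesz_diff} and~\ref{lem:riesz-gradient-Holder} on the near region---matches the paper's proof. For $k=1$ (i.e.\ $t>1$) your treatment of both regions coincides with the paper's Case~2 essentially verbatim.

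The substantive difference is in the far region for $k=0$. The paper does \emph{not} use any pointwise H\"older bound on $\bp$ there: instead it observes that on $B=\{|x-y|\ge\tfrac14\delta(x)\}$ one has $\delta(x)^{2\gamma}\le(4|x-y|)^{2\gamma}$, so
\[
\iint_B\frac{|\bp(x)-\bp(y)|^2}{|x-y|^{n+2t}}\delta(x)^{2\gamma}\,\mathrm{d}x\,\mathrm{d}y
\;\lesssim\;|\bp|^2_{H^{t-\gamma}(\Omega)},
\]
and then invokes the \emph{unweighted} Sobolev regularity $\bp\in H^{3/2-s-\varepsilon}(\Omega)$ already established in \eqref{eq:sobolev-regularity-all}. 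This gives the factor $(\tfrac32-s+\gamma-t)^{-1}$ directly, with no regime change and no extra work. Your route via a global $C^{0,1-s}$ bound on $\bp$ (obtained by a bulk-detour integration of $|\nabla\bp|\lesssim\delta^{-s}$) is correct in principle, but your bookkeeping slips: the far-field polar integral $\int_{\delta/4}^D r^{1-2s-2t}\,\mathrm{d}r$ yields a factor $|t-(1-s)|^{-1}$, not the $|t-1|^{-1}$ you record. You partly notice this (``changes convergence regime near $t=1-s$''), but then still list $|t-1|^{-1}$ as the output; the spurious blow-up at $t=1-s$ is \emph{not} present in \eqref{eq:weighted-regularity-p} and must be removed, e.g.\ by optimising the H\"older exponent $\alpha\in[0,1-s]$ used in the bound $|\bp(x)-\bp(y)|\lesssim|x-y|^\alpha$. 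This is doable but is genuinely more work than the paper's one-line weight-absorption trick, and your ``auxiliary case split'' sketch is too vague to confirm that the final constant actually collapses to the stated form.

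In short: your $k=1$ argument and your $k=0$ near-region argument are correct and match the paper. For the $k=0$ far region, replace your pointwise H\"older strategy by the paper's much simpler device of trading $\delta(x)^{2\gamma}$ for $|x-y|^{2\gamma}$ and appealing to \eqref{eq:sobolev-regularity-all}; this eliminates the artificial singularity at $t=1-s$ and gives the constant in \eqref{eq:weighted-regularity-p} without further case analysis.
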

\begin{proof}
\fbox{Case 1: $\gamma \leq s - \frac12$}. We examine the condition under which $|\boldsymbol{p}|_{H^t_\gamma} < \infty$, 
namely, the integrability of the following quantity:
$$ 
\begin{aligned}
I &= \iint_{\Omega \times \Omega} \frac{|\bp(x) - \bp(y)|^2}{|x-y|^{n+2t}} \delta(x,y)^{2\gamma} \mathrm{d}x\mathrm{d}y \\
& = 2\left(\iint_B + \iint_{A \setminus B}\right) \frac{|\bp(x) - \bp(y)|^2}{|x-y|^{n+2t}} \delta(x)^{2\gamma} \mathrm{d}x\mathrm{d}y.
\end{aligned}
$$ 

First, by the gradient estimate of the Riesz potential in Lemma \ref{lem:riesz_diff}, 
it follows that  $|\nabla \boldsymbol{p}(x)| \le C(\Omega, s, \beta)\|f\|_{C^\beta(\overline{\Omega})}\, \delta(x)^{-s}$.
For simplicity, in what follows all constants hidden in the notation $\lesssim$ 
are understood to depend on the square of $C(\Omega,s,\beta) \|f\|_{C^\beta(\overline{\Omega})}$. 
Then, by the mean value theorem,
$$
\begin{aligned}
& ~\iint_{A \setminus B} \frac{|\bp(x) - \bp(y)|^2}{|x-y|^{n+2t}} \delta(x)^{2\gamma} \mathrm{d}x\mathrm{d}y  \\
\lesssim&  \int_{\Omega} \delta(x)^{-2s+2\gamma} \int_{|y-x| \leq \frac{1}{4}\delta(x)} |x-y|^{2-2t-n} \mathrm{d}y \mathrm{d}x \quad (\text{when } t < 1)\\
\lesssim&~ \frac{1}{1-t}\int_{\Omega} \delta(x)^{-2s + 2\gamma + 2 - 2t} \mathrm{d}x \lesssim \frac{1}{(1-t)(\frac32 - s + \gamma - t)},
\end{aligned}
$$ 
where \eqref{eq:delta-integral} is used in the last step, which requires $t < \tfrac{3}{2} - s + \gamma$.
Note that \eqref{eq:sobolev-regularity-all} implies $\bp \in H^{\frac{3}{2}-s-\varepsilon}(\Omega)$. Without loss of generality, we may assume $t \ge \gamma$ since $\frac32 - s > \frac12 -s \geq \gamma$. Hence, for the far-field integral, we have
$$
\begin{aligned}
\iint_{B} \frac{|\bp(x) - \bp(y)|^2}{|x-y|^{n+2t}} \delta(x)^{2\gamma} \mathrm{d}x\mathrm{d}y &\leq C \iint_B \frac{|\bp(x) - \bp(y)|^2}{|x-y|^{n+2(t-\gamma)}} \mathrm{d}x\mathrm{d}y \\
& \leq C |\bp|_{H^{t-\gamma}(\Omega)}^2 \leq C \frac{\|f\|_{L^2(\Omega)}^2}{\frac32 - s + \gamma - t}.
\end{aligned}
$$ 

\fbox{Case 2: $\gamma > s - \frac12$.} By using the gradient estimate of the Riesz potential \eqref{eq:riesz_grad_est} and \eqref{eq:delta-integral}, we have 
$$
|\bp|_{H^1_\gamma(\Omega)}^2 \lesssim \int_{\Omega} \delta(x)^{-2s + 2\gamma} \mathrm{d}x \lesssim \frac{1}{\frac12 -s + \gamma}.
$$

Finally, it remains to examine the condition for $|\nabla \bp|_{H_\gamma^{t-1}(\Omega)} < \infty$ for $t \in (1,2)$. 
First, by Lemma \ref{lem:riesz-gradient-Holder} (weighted H\"{o}lder estimate for $\nabla I_{2-2s}w$), 
we obtain
$$
\begin{aligned}
& ~\iint_{A \setminus B} \frac{|\nabla\bp(x) - \nabla\bp(y)|^2}{|x-y|^{n+2(t-1)}} \delta(x)^{2\gamma} \mathrm{d}x\mathrm{d}y  \\
\lesssim&  \int_{\Omega} \delta(x)^{-2s-2\beta+2\gamma} \int_{|y-x| \leq \frac{1}{4}\delta(x)} |x-y|^{2-2t + 2\beta-n} \mathrm{d}y \mathrm{d}x \quad (\text{when } t < \beta+1)\\
\lesssim&~ \frac{1}{\beta+1-t}\int_{\Omega} \delta(x)^{-2s + 2\gamma + 2 - 2t} \mathrm{d}x \lesssim \frac{1}{(\beta+1-t)(\frac32 - s + \gamma - t)},
\end{aligned}
$$ 
where the last step requires $t < \tfrac{3}{2} - s + \gamma$. 
$$
\begin{aligned}
\iint_{B} \frac{|\nabla\bp(x) - \nabla\bp(y)|^2}{|x-y|^{n+2(t-1)}} \delta(x)^{2\gamma} \mathrm{d}x\mathrm{d}y\leq&~C \iint_B \frac{|\nabla \bp(x)|^2 + |\nabla\bp(y)|^2}{|x-y|^{n+2(t-1)}} \delta(x)^{2\gamma} \mathrm{d}x\mathrm{d}y \\
\lesssim&  \int_{\Omega} \delta(x)^{-2s+2\gamma} \int_{|y-x| > \frac{1}{4}\delta(x)} |x-y|^{2-2t -n} \mathrm{d}y \mathrm{d}x \\
\lesssim&~ \frac{1}{(t-1)(\frac32 - s + \gamma - t)}.
\end{aligned} 
$$ 
Combining the two cases yields the estimate \eqref{eq:weighted-regularity-p}.
\end{proof}

\begin{remark}[differentiability order of $\bp$] \label{rmk:differentiability-p}
Unlike the differentiability of $u$ or $\bsigma$, the differentiability order of $\bp$ depends on the choice of the weight parameter $\gamma$. It can be less than $1$ when $\gamma \le s - \frac12$, or lies between $1$ and $2$ when $\gamma > s - \frac12$. In any case, the differentiability order never exceeds $2$, since $\beta + 1 < 3 - 2s < 2$.
\end{remark}

\section{Discretization and main results}\label{sc:discretization}
In this section, we introduce a local discontinuous Galerkin (LDG) discretization for the mixed formulation of the integral fractional Laplacian \eqref{eq:mixed_fractional_problem}. Inspired by the LDG scheme for second-order elliptic equations \cite{castillo2000priori}, the method is developed based on numerical fluxes. We refer to the survey \cite{arnold2002unified} for a unified analysis of various DG methods. In addition, in this section, we will also present the main results of this work and provide a discussion on the findings.

\subsection{LDG method}
Suppose $\{\mathcal{T}_h\}$ is a family of polygonal partitions of domain $\Omega$. Specifically, $\mathcal{T}_h$ represents a partition of $\Omega$ into polygon $T$, each with a diameter denoted by $h_T$. We define $\mathcal{F}_h$ as the set of all $(n-1)$-dimensional faces, $\mathcal{F}_h^0$ as the set of all interior faces, and $\mathcal{F}_h^\partial := \mathcal{F}_h \setminus \mathcal{F}_h^0$ as the set of boundary faces.

To obtain the a priori error estimates, some assumptions for the partition need to be proposed. We assume the family $\mathcal{T}_h$ to be shape regular, namely, 
\[
\sigma: = \sup\limits_{h>0} \max_{T\in \mathcal{T}_h} \frac{h_T}{\rho_T} < \infty,
\]
where $h_T = \text{diam}(T)$ and $\rho_T$ is the diameter of the largest ball contained in $T$. Furthermore, we assume that the partition is local quasi-uniform, which means $\exists\, \lambda > 0 $ such that
\[
h_T \leq \lambda h_{T'} \; \forall T, T' \in \mathcal{T}_h \text{ with } \overline{T} \cap \overline{T}' \neq \varnothing.
\]

To establish the weak formulation that defines the DG method, we multiply \eqref{eq:mixed_fractional_problem} by arbitrary smooth test functions $\btau, \bq$, and $v$, respectively, and apply integration by parts over each element $T \in \mathcal{T}_h$. This leads to the following system:  
\begin{equation} \label{eq:mixed_fractional_problem1}
	\left\{
	\begin{aligned}
		\int_{T} I_{2-2s} \bsigma \cdot \btau\,\mathrm{d}x - \int_{T} \bp \cdot \btau\,\mathrm{d}x &= 0,\\			
		\int_{T} \bsigma \cdot \bq\,\mathrm{d}x + \int_{T} u\,\div \bq\,\mathrm{d}x - \int_{\partial T} u\,\bq \cdot \bn\,\mathrm{d}s &= 0,\\		
		\int_{T} \bp \cdot \nabla v\,\mathrm{d}x - \int_{\partial T} \bp \cdot \bn\,v\,\mathrm{d}s &= \int_{T} f v\,\mathrm{d}x,
	\end{aligned}
	\right.	
\end{equation}
where $\bn$ denotes the outward unit normal vector on $\partial T$. 
These equations are well-defined for arbitrary functions $(\bsigma, \bp, u)$ and $(\btau, \bq, v)$ in $\bSigma \times \bQ \times V$, where  
\[
\begin{aligned}
	\bSigma &:= \left\{\btau \in \boldsymbol{L}^2(\mathbb{R}^n) : \btau = 0 \text{ in }  \Omega^c \right\},\\
	\bQ &:= \left\{\bq \in \boldsymbol{L}^2(\mathbb{R}^n) : \bq|_T \in \boldsymbol{H}^1(T), \; \forall\, T \in \mathcal{T}_h \right\},\\
	V &:= \left\{v \in L^2(\mathbb{R}^n) : v = 0 \text{ in } \Omega^c, v|_T \in H^1(T), \; \forall\, T \in \mathcal{T}_h \right\}.
\end{aligned}
\]  

Noting that \eqref{eq:mixed_fractional_problem1} involves only the information of $\bp$ within $\Omega$, we therefore seek a scalar approximation $u_h$ for $u$, along with vector approximations $(\bsigma_h, \bp_h)$ for $(\bsigma, \bp|_{\Omega})$, in the finite element space $\bSigma_h \times \bQ_h \times V_h \subset \bSigma \times \bQ|_{\Omega} \times V$, where  
\begin{equation} \label{eq:DG-spaces}
	\begin{aligned}
		\bSigma_h & := \{\btau \in \boldsymbol{L}^2(\mathbb{R}^n) : \btau = 0 \text{ in }  \Omega^c, \btau\vert_T \in \bSigma(T), \forall\, T \in \mathcal{T}_h \},\\
		\bQ_h & := \{\bq \in \boldsymbol{L}^2(\Omega): \bq\vert_T \in \bQ(T), \forall\, T \in \mathcal{T}_h \},\\
		V_h & := \{v \in L^2(\mathbb{R}^n) : v = 0 \text{ in } \Omega^c, v\vert_T \in V(T), \forall\, T \in \mathcal{T}_h \},
	\end{aligned}
\end{equation}
respectively. Here, $\bSigma(T)$, $\bQ(T)$, and $V(T)$ denote local finite element spaces, which typically consist of polynomials. The approximate solution $(\bsigma_h, \bp_h, u_h)$ is then defined using the weak formulation described above. Since the test functions belong to discontinuous finite element spaces, the boundary values on each element are represented by the \emph{numerical fluxes} $\widehat{u}_h$ and $\widehat{\bp}_h$, which are introduced to ensure the stability of the method and to improve its accuracy.


\paragraph{LDG numerical fluxes} We first introduce some classical notation. Let $T^+$ and $T^-$ be two adjacent elements of $\mathcal{T}_h$, and let $F = \partial T^+ \cap \partial T^-$, which is assumed to have a nonzero $(n-1)$-dimensional measure. The outward unit normals on $F$ are denoted by $\bn^+$ and $\bn^-$, corresponding to $T^+$ and $T^-$, respectively.
Let $(\bq, v)$ be functions that are smooth within each element $T^\pm$. We denote by $(\bq^\pm, v^\pm)$ the traces of $(\bq, v)$ on $F$ from the interiors of $T^+$ and $T^-$, respectively. Using these, we define the mean values $\{\cdot\}$ (or $\vavg{\cdot}$) and the jumps $[\cdot]$ (or $\llbracket \cdot \rrbracket$) on $F \in \mathcal{F}_h^0$ as follows:
\[
\begin{alignedat}{2}
  &\{v\} := \tfrac12(v^+ + v^-), \quad &\vavg{\bq} :=& \tfrac12(\bq^+ + \bq^-),\\
  &\llbracket v\rrbracket := v^+\bn^+ + v^-\bn^-, \quad &[\bq] :=& \bq^+\!\cdot\!\bn^+ + \bq^-\!\cdot\!\bn^-.
\end{alignedat}
\]
And for the face $F\in \mathcal{F}_h^\partial $, define
\[
\{v\} := v, \quad \vavg{\bq} := \bq, \quad 
\llbracket v\rrbracket := v\bn, \quad [\bq] := \bq\!\cdot\!\bn.
\]

Following \cite{castillo2000priori}, we present the general form of LDG numerical fluxes as follows. 
	\begin{equation} \label{eq:numer-flux}
	\left\{
	\begin{aligned}
		\widehat{\bp}_h\vert_{F} &= \vavg{\bp_h} - \bbeta [\bp_h]- C_{11}\llbracket u_h \rrbracket, \\
		\widehat{u}_h\vert_{F} &= \{u_h\} + \bbeta \cdot \llbracket u_h \rrbracket,
	\end{aligned} 
	\qquad \forall F \in \mathcal{F}_h^0.
	\right.
	\end{equation}
where the parameters $C_{11}$, $\bbeta$ depend on $F$.  
Since $u$ vanishes outside the domain, and $\bp$ has no prescribed boundary condition on $\partial\Omega$ (thus being {\it natural} in a certain sense), we set:  
\begin{equation} \label{eq:numer-flux-boundary}
\left\{
\begin{aligned}
	\widehat{\bp}_h\vert_{F} &= \bp_h - C_{11} u_h \bn, \\
	\widehat{u}_h\vert_{F} &= 0, 
\end{aligned}
\right.
\qquad \forall F \in \mathcal{F}_h^\partial.
\end{equation}

\paragraph{Stabilization for vector variables} In the present formulation, two vector-valued variables, $\bsigma$ and $\bp$, are introduced, which distinguishes the method from the standard formulation of second-order elliptic problems. At the discrete level, these variables are approximated in possibly different finite element spaces, $\bSigma_h$ and $\bQ_h$. This discrepancy reflects the distinct differentiability properties of the two variables, as discussed earlier in Remark \ref{rmk:differentiability-u-sigma} and Remark \ref{rmk:differentiability-p}. To accommodate such a difference, a stabilization mechanism is incorporated by adding the term
\begin{equation}\label{eq:stabilization}
  \sum_{T \in \mathcal{T}_h} C_{\rm s} \int_{T} (\bI - \bPi_{\bSigma}^0)\bp_h \cdot (\bI - \bPi_{\bSigma}^0)\bq_h \mathrm{d}x,
\end{equation}
where $C_{\rm s} > 0$ is a stabilization parameter, and $\bPi_{\bSigma}^0$ denotes the $L^2$-projection onto $\bSigma_h$. Note that when $\bQ(T) = \bSigma(T)$, the above stabilization term automatically vanishes.

\paragraph{LDG method} 
Using the above notation in \eqref{eq:mixed_fractional_problem1} yields the following LDG method: 
Find $(\bsigma_h, \bp_h, u_h) \in \bSigma_h \times \bQ_h \times V_h$ such that, 
for all $(\btau_h, \bq_h, v_h) \in \bSigma_h \times \bQ_h \times V_h$,
\begin{equation}\label{eq:mixed_scheme}
\left\{
\begin{aligned}
\int_{\Omega} I_{2-2s} \bsigma_h \cdot \btau_h \, \mathrm{d}x
  - \int_{\Omega} \bp_h \cdot \btau_h \, \mathrm{d}x 
  &= 0,\\[1mm]
\int_{\Omega} \bsigma_h \cdot \bq_h \, \mathrm{d}x
  + \sum_{T \in \mathcal{T}_h} C_{\rm s} 
    \int_{T} (\bI - \bPi_{\bSigma}^0)\bp_h \cdot (\bI - \bPi_{\bSigma}^0)\bq_h \, \mathrm{d}x
  &\\[-1mm]
  + \sum_{T \in \mathcal{T}_h} \int_{T} u_h \div \bq_h \, \mathrm{d}x
  - \sum_{F \in \mathcal{F}_h} \int_{F} \widehat{u}_h [\bq_h] \, \mathrm{d}s
  &= 0,\\[1mm]
\sum_{T \in \mathcal{T}_h} \int_{T} \bp_h \cdot \nabla v_h \, \mathrm{d}x
  - \sum_{F \in \mathcal{F}_h} \int_{F} \widehat{\bp}_h \cdot \llbracket v_h \rrbracket \, \mathrm{d}s
  &= \int_{\Omega} f v_h \, \mathrm{d}x,
\end{aligned}
\right.
\end{equation}
where the numerical fluxes $\widehat{u}_h$ and $\widehat{\bp}_h$ are given in \eqref{eq:numer-flux} and \eqref{eq:numer-flux-boundary}.


\begin{remark}[nonlocal coupling]
	One of the key ideas behind the LDG method for second-order problems is to introduce auxiliary variables, rewriting the problem into a lower-order system. Local numerical fluxes are then defined to maintain local conservativity. The scheme \eqref{eq:mixed_scheme} follow this approach. However, due to the introduction of the Riesz potential, the coupling of $\bsigma_h$ is inherently nonlocal, which corresponds to the nature of the fractional Laplacian.
	On the other hand, since \(\bsigma_h, \btau_h \in \bSigma_h\) are supported in \(\Omega\), we have  
\[
\int_{\Omega}(I_{2-2s}\bsigma_h)(x) \cdot \btau_h(x)\mathrm{d}x 
= \frac{1}{\gamma(2-2s)}\int_{\Omega} \int_\Omega \frac{\bsigma_h(y) \cdot \btau_h(x)}{|x-y|^{n-(2-2s)}} \mathrm{d}y\mathrm{d}x.
\]
Thus, computation only needs to be performed within the domain $\Omega$, which is notably different from the primal formulation of the fractional Laplacian, where integrals over the entire $\mathbb{R}^n$ must be considered (see \cite{acosta2017short}). 
\end{remark}

\subsection{The classical mixed setting}
The study of DG method is greatly facilitated if we recast its formulation in a classical mixed finite element setting. Above and throughout, we use the notation
\[
(v,w)_{\mathcal{T}_h} = \sum_{T\in \mathcal{T}_h} (v,w)_T\qquad\text{and}\qquad \langle v,w\rangle_{\partial\mathcal{T}_h}=\sum_{T\in \mathcal{T}_h}\langle v,w\rangle_{\partial T}, 
\]
where we write $(u,v)_D = \int_D uv\mathrm{d}x$ whenever $D$ is a domain of $\mathbb{R}^n$, and $\langle u,v\rangle_D = \int_D uv\mathrm{d}x$ whenever $D$ is a domain of $\mathbb{R}^{n-1}$. For the vector functions $\boldsymbol{v}$ and $\boldsymbol{w}$, the notation is similarly defined with the integrand being the dot product $\boldsymbol{v}\cdot \bw$.
In particular, for notational simplicity, we denote $(\cdot,\cdot):=(\cdot,\cdot)_{\Omega}$.

We take the numerical fluxes $\widehat{u}_h$ and $\widehat{\bp}_h$ as defined in \eqref{eq:numer-flux} and \eqref{eq:numer-flux-boundary} substitute them into the scheme \eqref{eq:mixed_scheme}. 
The resulting LDG scheme reads: Find $(\bsigma_h, \bp_h, u_h) \in \bSigma_h \times \bQ_h \times V_h$ such that
\begin{equation}\label{eq:short_LDG_scheme}
\left\{
\begin{array}{r@{\;}c@{\;}l @{\;}c@{\;}l @{\;}c@{\;}l @{\quad}l}
  a(\bsigma_h, \btau_h) & -&  (\bp_h, \btau_h) &   &              & = & 0       & \forall \btau_h \in \bSigma_h,\\
  (\bsigma_h, \bq_h)    & + & a_{\rm s}(\bp_h, \bq_h)  & + & b(u_h, \bq_h) & = & 0       & \forall \bq_h \in \bQ_h,\\
                         &  - &  b(v_h, \bp_h) & + & c(u_h, v_h)   & = & (f, v_h) & \forall v_h \in V_h.
\end{array}
\right.
\end{equation}
where
\begin{equation}\label{eq:definition_abc}
	\begin{aligned}
		a(\bsigma_h,\btau_h)& := \int_{\Omega}I_{2-2s}\bsigma_h \cdot \btau_h \mathrm{d}x, \\
		a_{\rm s}(\bp_h, \bq_h) &:= (C_{\rm s} ( \bI - \bPi_\bSigma^0)\bp_h, ( \bI - \bPi_\bSigma^0)\bq_h)_{\mathcal{T}_h}, \\
		b(u_h,\bq_h) &:= ( u_h, \div \bq_h)_{\mathcal{T}_h} -\int_{\mathcal{F}_h^0} (\{u_h\}+\bbeta\cdot \llbracket u_h \rrbracket) [ \bq_h ]\mathrm{d}s\\
		& = -(\nabla u_h,\, \bq_h)_{\mathcal{T}_h} + \int_{\mathcal{F}^0_h} (\vavg{\bq_h} - \bbeta[\bq_h]) \cdot \llbracket u_h\rrbracket\mathrm{d}s + \int_{\partial \Omega}u_h \bq_h\cdot \bn \mathrm{d}s, \\
		c(u_h,v_h) &:= \int_{\mathcal{F}_h} C_{11} \llbracket u_h \rrbracket\cdot \llbracket v_h\rrbracket \mathrm{d}s.
	\end{aligned}
\end{equation}

\begin{lemma}[well-posedness of the LDG method]
For the LDG method \eqref{eq:short_LDG_scheme}, assume that the finite element spaces satisfy $\nabla V(T) \subset \bQ(T)$. 
Moreover, let the coefficients in the numerical fluxes \eqref{eq:numer-flux} satisfy $C_{11} > 0$, 
and let the stabilization parameter in \eqref{eq:stabilization} satisfy $C_s > 0$. 
Then, the LDG method admits a unique solution $(\bsigma_h, \bp_h, u_h) \in \bSigma_h \times \bQ_h \times V_h$.
\end{lemma}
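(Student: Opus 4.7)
The plan is to reduce well-posedness of the square linear system to uniqueness, i.e. to show that if $f=0$ then $(\bsigma_h,\bp_h,u_h)=0$. To that end, I would test the three equations of \eqref{eq:short_LDG_scheme} with $(\btau_h,\bq_h,v_h)=(\bsigma_h,\bp_h,u_h)$ and add them, taking advantage of the cancellations $(\bp_h,\bsigma_h)=(\bsigma_h,\bp_h)$ and $b(u_h,\bp_h)-b(u_h,\bp_h)=0$. This yields the energy identity
\begin{equation*}
  a(\bsigma_h,\bsigma_h)+a_{\rm s}(\bp_h,\bp_h)+c(u_h,u_h)=(f,u_h).
\end{equation*}
With $f=0$, each of the three terms on the left is nonnegative and must therefore vanish.

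From $c(u_h,u_h)=C_{11}\int_{\mathcal{F}_h}|\llbracket u_h\rrbracket|^2\mathrm{d}s=0$ together with $C_{11}>0$, I would conclude that $\llbracket u_h\rrbracket=0$ on every face; combined with the definition of $V_h$ (vanishing outside $\Omega$), this makes $u_h$ globally continuous and zero on $\partial\Omega$. From $a(\bsigma_h,\bsigma_h)=0$ and the Fourier/Riesz identity (cf.\ the computation in the proof of Lemma~\ref{lem:riesz_grad}) $a(\bsigma_h,\bsigma_h)=\|I_{1-s}\bsigma_h\|_{L^2(\mathbb{R}^n)}^2$, I get $\bsigma_h=0$ by positivity of the Riesz symbol. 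The first equation of \eqref{eq:short_LDG_scheme} then forces $(\bp_h,\btau_h)=0$ for every $\btau_h\in\bSigma_h$, i.e.\ $\bPi^0_{\bSigma}\bp_h=0$. On the other hand $a_{\rm s}(\bp_h,\bp_h)=C_{\rm s}\|(\bI-\bPi^0_{\bSigma})\bp_h\|^2_{L^2(\Omega)}=0$ combined with $C_{\rm s}>0$ gives $\bp_h=\bPi^0_{\bSigma}\bp_h$. Together, these imply $\bp_h=0$.

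It remains to extract $u_h=0$, and this is where the assumption $\nabla V(T)\subset\bQ(T)$ enters decisively. Having already set $\bsigma_h=\bp_h=0$, the second equation collapses to $b(u_h,\bq_h)=0$ for all $\bq_h\in\bQ_h$. Using the integration-by-parts form of $b$ in \eqref{eq:definition_abc} together with the continuity of $u_h$ across interior faces and $u_h|_{\partial\Omega}=0$ established above, this simplifies to
\begin{equation*}
  (\nabla u_h,\bq_h)_{\mathcal{T}_h}=0\qquad\forall\,\bq_h\in\bQ_h.
\end{equation*}
Choosing $\bq_h|_T:=\nabla u_h|_T\in\bQ(T)$, which is admissible by the space inclusion hypothesis, yields $\nabla u_h=0$ elementwise. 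Combined with the global continuity and the zero boundary trace of $u_h$, this finally gives $u_h=0$.

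The main obstacle is conceptual rather than computational: the stabilization $a_{\rm s}$ alone only controls the component of $\bp_h$ orthogonal to $\bSigma_h$, while $(\bp_h,\btau_h)$-testing against $\bSigma_h$ only controls its $\bSigma_h$-projection. The key point to identify is that the first equation and the stabilization are \emph{complementary}: the former kills $\bPi^0_{\bSigma}\bp_h$, the latter kills $(\bI-\bPi^0_{\bSigma})\bp_h$, and it is exactly this interplay that guarantees uniqueness when $\bQ_h\neq\bSigma_h$. Once this is observed, the remaining steps for $u_h$ are a standard DG recovery argument relying on $\nabla V(T)\subset\bQ(T)$.
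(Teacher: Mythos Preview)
Your proposal is correct and follows essentially the same route as the paper: derive the energy identity by testing with $(\bsigma_h,\bp_h,u_h)$, use Riesz/Fourier positivity to kill $\bsigma_h$, combine the first equation with the stabilization to kill $\bp_h$ via the complementary projections, and finish with the standard DG recovery of $u_h$ using $\nabla V(T)\subset\bQ(T)$. The only cosmetic differences are the order in which you treat $a$ and $c$ and your writing $C_{11},C_{\rm s}$ as global constants rather than facewise/elementwise parameters, neither of which affects the argument.
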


\begin{proof}
	Due to the linearity and finite dimensionality of the problem, it suffices to show that the only solution to \eqref{eq:short_LDG_scheme} with $f = 0$ is $\bsigma_h = 0$, $\bp_h = 0$, and $u_h = 0$. By choosing the test functions $\btau_h = \bsigma_h$, $\bq_h = \bp_h$, and $v_h = u_h$, and summing the three equations, we obtain
	\[
	a(\bsigma_h,\bsigma_h) + a_{\rm s}(\bp_h, \bp_h) + c(u_h,u_h) = 0.
	\]
	Noting that each term in the above equation is nonnegative, it follows that they must all vanish individually.
	
	Using the Fourier transform of Riesz potential \cite[Lemma 1, Chapter V]{stein1970singular}, we have 
	$$
	0 = a(\bsigma_h, \bsigma_h) = (I_{2-2s}\tilde{\bsigma}_h, \tilde{\bsigma}_h)_{L^2(\mathbb{R}^n)} = (2\pi)^{2s-2} \int_{\mathbb{R}^n}|\xi|^{2s -2}|\mathcal{F}[\tilde{\bsigma}_h](\xi)|^2\mathrm{d}\xi,
	$$ 
	we conclude that $\bsigma_h = 0$. Consequently, the first equation reduces to
	$(\bp_h,\btau_h) = 0, \forall \btau_h \in \bSigma_h$, which implies $\bPi_{\bSigma}^0 \bp_h = 0$.
Moreover, since $a_{\rm s}(\bp_h, \bp_h) = 0$ and $C_s > 0$, we also have $(\bI - \bPi_{\bSigma}^0)\bp_h = 0$.
Combining these results yields $\bp_h = 0$.
	
Subsequently, since $C_{11} > 0$ and $c(u_h, u_h) = 0$, it follows that $\llbracket u_h \rrbracket = 0$ on $F \in \mathcal{F}_h^0$ and $u_h = 0$ on $F \in \mathcal{F}_h^\partial$. At this point, the second equation reduces to $b(u_h, \bq_h) = 0,  \forall \bq_h \in \bQ_h$. Therefore, we have  
\[
0 = b(u_h, \bq_h) = -(\nabla u_h, \bq_h)_{\mathcal{T}_h} \quad \forall \bq_h \in \bQ_h.  
\]  
Since $\nabla V(T) \subset \bQ(T)$, it follows that $\nabla u_h = 0$ on each $T \in \mathcal{T}_h$, which implies that $u_h$ is piecewise constant. Given that $\llbracket u_h \rrbracket = 0$ on $\mathcal{F}_h^0$ and $u_h = 0$ on the boundary, we conclude that $u_h = 0$, which completes the proof.
\end{proof}

\begin{remark}[reduced linear system] \label{rmk:LDG}
As in the LDG methods for second-order elliptic problems, the flux variables in the fractional Laplacian LDG scheme \eqref{eq:short_LDG_scheme} can be locally eliminated, so that the dimension of the resulting algebraic system reduces to $\dim V_h$. To illustrate this, consider the case $\boldsymbol{\Sigma}(T)\subset \boldsymbol{Q}(T)$. Then the  $L^2$-orthogonal decomposition holds $\bQ_h = \bSigma_h \oplus (\bI - \bPi_{\bSigma}^0) \bQ_h$.

Let $S_h$, $P_h^{0}$, $P_h^{\perp}$, and $U_h$ denote the vector representations of $\bsigma_h$, $\bPi_{\bSigma}^0 \bp_h$, $(\bI - \bPi_{\bSigma}^0)\bp_h$, and $u_h$, respectively. The algebraic system associated with \eqref{eq:short_LDG_scheme} then takes the form
$$
\begin{pmatrix}
A & -M_0 & 0 & 0 \\
M_0 & 0 & 0 & B_0^T \\
0 & 0 & M_{{\rm s},\perp} & B_{\perp}^T \\
0 & -B_0 & - B_{\perp} & C
\end{pmatrix}
\begin{pmatrix}
S_h \\ P_h^0 \\ P_h^\perp \\ U_h
\end{pmatrix} = 
\begin{pmatrix}
0 \\ 0 \\ 0 \\ F
\end{pmatrix}. 
$$ 
Here, $A$ and $C$ denote the matrix representations of $a(\cdot,\cdot)$ and $c(\cdot,\cdot)$, respectively. The matrices $M_{0}$ and $M_{\mathrm{s},\perp}$ are the mass matrix on $\bSigma_h$ and the matrix representation of $a_{\mathrm{s}}(\cdot,\cdot)$ restricted to $(\bI - \bPi_{\bSigma}^0) \bQ_h$, respectively --- both have the block-diagonal structure. Moreover, $B_{0}$ and $B_{\perp}$ denote the matrix representations of $b(\cdot,\cdot)$ on $V_h\times \boldsymbol{\Sigma}_h$ and $V_h\times (\bI - \bPi_{\bSigma}^0) \bQ_h$, respectively.
Owing to the block structure of $M_{0}$ and $M_{\mathrm{s},\perp}$, the above linear system can be reduced to a single equation for $U_h$ of the form 
\begin{equation} \label{eq:reduced-system}
A_{\rm LDG} U_h = F, \quad \text{where} \quad A_{\rm LDG} := B_0M_0^{-1} A M_0^{-1} B_0^T + B_\perp M_{{\rm s}, \perp}^{-1} B_\perp^T + C.
\end{equation}
\end{remark}

\subsection{Main results based on regularity theory} \label{subsec:main}
We first introduce a naturally arising {\it energy} seminorm that emerges in error estimation for the LDG method: $\forall \tau\in \bSigma+\bSigma_h, v\in V+ V_h$, define
\begin{equation} \label{eq:energy-norm}
|(\btau, v)|^2_{\mathcal{A}} :=(I_{2-2s}\btau,\btau) + c(v,v).
\end{equation}
 Since $\bsigma=\nabla u$, Lemma \ref{lem:riesz_grad} (equivalent form
 of $\widetilde{H}^s$-norm) leads to $(I_{2-2s}\bsigma,\bsigma)\eqsim \|u\|_{\widetilde{H}^s(\Omega)}^2$, which justifies calling it the energy seminorm.

\subsubsection{Results on quasi-uniform meshes}
By the Sobolev regularity result in \eqref{eq:sobolev-regularity-all}, $\bsigma \in \boldsymbol{H}^{s-\frac12-\varepsilon}(\Omega)$ and $\bp \in \boldsymbol{H}^{\frac{3}{2}-s-\varepsilon}(\Omega)$. In this case, it is sufficient to choose $\bSigma(T)=\bQ(T)=\boldsymbol{\mathcal{P}}_{0}(T)$, and no stabilization term such as \eqref{eq:stabilization} is required. 
Equivalently, one may set $C_{\rm s}=0$.
	
\begin{theorem}[error estimate on quasi-uniform meshes]\label{tm:uniform_estimate}
Let $\Omega\subset \mathbb{R}^n$ be a bounded Lipschitz domain, $s\in(\frac12,1)$, $f\in L^2(\Omega)$, and 
$(\bsigma,\bp,u)\in \bSigma\times \bQ \times V$ be the solution of (\ref{eq:mixed_fractional_problem}), $(\bsigma_h,\bp_h,u_h)\in \bSigma_h\times \bQ_h\times V_h$ be the approximation solution of \eqref{eq:short_LDG_scheme} on shape-regular triangulation, with 
$$
\bSigma(T) = \boldsymbol{\mathcal{P}}_0(T), \quad \bQ(T) = \boldsymbol{\mathcal{P}}_0(T), \quad V(T) = \mathcal{P}_1(T).
$$
Then, under the condition $C_{11} \eqsim h^{1-2s}$, we have
        \begin{subequations}
	\begin{align}
		|(\bsigma-\bsigma_h, u-u_h)|_{\mathcal{A}} & \leq C(\Omega,n,s,\sigma) 
		h^{\frac{1}{2}} |\log h|^{\frac12} \|f\|_{L^2(\Omega)}, \label{eq:energy-result-uniform} \\
		\|u - u_h\|_{L^2(\Omega)} & \leq C(\Omega,n,s,\sigma)  h |\log h| \|f\|_{L^2(\Omega)}. \label{eq:L2-result-uniform}
		\end{align}
	\end{subequations}
\end{theorem}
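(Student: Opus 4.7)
The plan is to adapt the standard LDG energy-projection analysis (cf.\ \cite{castillo2000priori}) to the nonlocal setting, the key new ingredient being a negative-Sobolev-norm interpretation of the Riesz-potential bilinear form $a(\cdot,\cdot)$. First, since the exact $u$ is continuous and vanishes on $\partial\Omega$ while $\bp$ has continuous normal traces, substitution into \eqref{eq:short_LDG_scheme} (with $C_{\rm s}=0$, permissible here because $\bSigma(T)=\bQ(T)$) yields consistency, hence Galerkin orthogonality
\begin{align*}
a(\bsigma-\bsigma_h,\btau_h) - (\bp-\bp_h,\btau_h) &= 0, \\
(\bsigma-\bsigma_h,\bq_h) + b(u-u_h,\bq_h) &= 0, \\
-b(v_h,\bp-\bp_h) + c(u-u_h,v_h) &= 0
\end{align*}
for all $(\btau_h,\bq_h,v_h)\in\bSigma_h\times\bQ_h\times V_h$.

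Next, I would introduce the local $L^2$-projection $\bPi^0$ (whose formula projects onto $\bSigma_h$ and $\bQ_h$ alike) and a Scott--Zhang interpolant $\Pi_u$ into $V_h$, and decompose each error into interpolation and discrete parts: $\bsigma-\bsigma_h=\eta_\sigma-e_\sigma$, $\bp-\bp_h=\eta_p-e_p$, $u-u_h=\eta_u-e_u$. Testing the error equations with $\btau_h=e_\sigma$, $\bq_h=e_p$, $v_h=e_u$ and summing, the mixed $b$-terms between the $e$'s cancel by design and one obtains the energy identity
$$|(e_\sigma,e_u)|^2_{\mathcal{A}} = a(\eta_\sigma,e_\sigma) - (\eta_p,e_\sigma) + (\eta_\sigma,e_p) + b(\eta_u,e_p) - b(e_u,\eta_p) + c(\eta_u,e_u).$$
Young's inequality absorbs the $e$-factors into the left-hand side, so that the convergence rate is controlled by the interpolation quantities $a(\eta_\sigma,\eta_\sigma)$, $\|\eta_p\|_{L^2}^2$, $c(\eta_u,\eta_u)$, and weighted jump-trace seminorms of $\eta_u$, $\eta_p$.

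The crucial estimate is for $a(\eta_\sigma,\eta_\sigma)$. By the Fourier identity $a(\eta_\sigma,\eta_\sigma) = \|I_{1-s}\eta_\sigma\|^2_{L^2(\mathbb{R}^n)} \eqsim \|\eta_\sigma\|^2_{H^{s-1}(\mathbb{R}^n)}$, together with duality and the $L^2$-orthogonality of $\bPi^0$ against piecewise constants,
$$\|\eta_\sigma\|_{H^{s-1}(\mathbb{R}^n)} = \sup_{\phi\in H^{1-s}(\mathbb{R}^n)}\frac{(\eta_\sigma,\phi-\bPi^0\phi)_{\Omega}}{\|\phi\|_{H^{1-s}}} \lesssim h^{s-\tfrac12-\varepsilon}\|\bsigma\|_{H^{s-\tfrac12-\varepsilon}(\Omega)}\cdot h^{1-s} = h^{\tfrac12-\varepsilon}\|\bsigma\|_{H^{s-\tfrac12-\varepsilon}(\Omega)},$$
where I use the standard $L^2$-projection rates on $H^{s-\tfrac12-\varepsilon}$ and on $H^{1-s}$ (noting $1-s\in(0,\tfrac12)$). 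Analogously $\|\eta_p\|_{L^2}\lesssim h^{\tfrac32-s-\varepsilon}\|\bp\|_{H^{\tfrac32-s-\varepsilon}}$, and the $C_{11}\eqsim h^{1-2s}$-weighted face jumps of $\eta_u$ give $h^{\tfrac12-\varepsilon}\|u\|_{H^{s+\tfrac12-\varepsilon}}$ by a standard trace inequality. Plugging in the regularity bound \eqref{eq:sobolev-regularity-all}, which carries the factor $\varepsilon^{-1/2}$, and choosing $\varepsilon=|\log h|^{-1}$ to balance $h^{-\varepsilon}$ against $\varepsilon^{-1/2}$ yields \eqref{eq:energy-result-uniform}. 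For the $L^2$ bound \eqref{eq:L2-result-uniform}, I would invoke an Aubin--Nitsche duality with the dual problem $(-\Delta)^s u^*=u-u_h$ in $\Omega$, whose solution enjoys the same regularity \eqref{eq:sobolev-regularity-all} with datum $u-u_h\in L^2(\Omega)$; testing the primal error equations against an interpolant of the dual triple and applying the energy estimate to the dual errors produces $\|u-u_h\|^2_{L^2}\lesssim h|\log h|\,\|f\|_{L^2}\|u-u_h\|_{L^2}$, whence \eqref{eq:L2-result-uniform}.

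The principal obstacle is precisely the nonlocal term $a(\eta_\sigma,\eta_\sigma)$: since $\eta_\sigma$ is merely an $L^2$-projection error and not the gradient of anything, Lemma \ref{lem:riesz_grad} cannot be invoked directly, and the crude bound $(I_{2-2s}\eta_\sigma,\eta_\sigma)\le\|I_{2-2s}\eta_\sigma\|_{L^2}\|\eta_\sigma\|_{L^2}\lesssim\|\eta_\sigma\|_{L^2}^2$ coming from \eqref{eq:Riesz-L2boundedness} would only yield an $h^{s-\tfrac12}$ rate. The sharp $h^{\tfrac12}$ rate is recovered through the $H^{s-1}(\mathbb{R}^n)$ duality characterization combined with the extra order of cancellation afforded by orthogonality of $\bPi^0$ against the coarser $H^{1-s}$ test scale --- an interplay that has no counterpart in the classical second-order LDG analysis.
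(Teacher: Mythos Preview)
Your overall structure (consistency, energy identity, duality) matches the paper, but two concrete gaps prevent the argument from closing, and both stem from your choice of interpolant for $\bsigma$.

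First, the equivalence $a(\eta_\sigma,\eta_\sigma)\eqsim\|\eta_\sigma\|_{H^{s-1}(\mathbb{R}^n)}^2$ is false. Since $a(\eta_\sigma,\eta_\sigma)=\int_{\mathbb{R}^n}|\xi|^{2(s-1)}|\widehat{\eta_\sigma}(\xi)|^2\,\mathrm{d}\xi$ is the \emph{homogeneous} $\dot H^{s-1}$ seminorm, it dominates the inhomogeneous $H^{s-1}$ norm but is not controlled by it at low frequencies ($|\xi|^{2s-2}\to\infty$ as $\xi\to 0$). Your duality argument correctly bounds $\|\eta_\sigma\|_{H^{s-1}}$ by $h^{1/2-\varepsilon}$, but this does not bound $a(\eta_\sigma,\eta_\sigma)^{1/2}$. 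Using the mean-zero property of $\eta_\sigma$ one can show the low-frequency contribution is $\lesssim\|\eta_\sigma\|_{L^2}^2\lesssim h^{2s-1-2\varepsilon}$, which is the dominant term and yields only the suboptimal rate $h^{s-1/2}$ that you yourself flag as insufficient.

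Second, in your energy identity the term $b(\eta_u,e_p)$ does not vanish: with $\eta_u$ continuous its jump contributions are zero, but the volume part $-(\nabla\eta_u,e_p)_{\mathcal{T}_h}$ remains, and there is no $e_p$-term on the left-hand side to absorb it. Young's inequality cannot help here.

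The paper circumvents both issues simultaneously by choosing $\bPi\bsigma:=\nabla\Pi^{\rm SZ}u$ rather than $\bPi^0\bsigma$. Then $\be_\sigma=\nabla e_u$ is a genuine gradient of an $\widetilde H^s$ function, so Lemma~\ref{lem:riesz_grad} gives $a(\be_\sigma,\be_\sigma)\eqsim\|e_u\|_{\widetilde H^s(\Omega)}^2$, reducing the crucial term directly to a Scott--Zhang interpolation error in $\widetilde H^s$. The same choice makes $(\be_\sigma,\bE_\bp)+b(e_u,\bE_\bp)=0$ identically, eliminating any uncontrolled $\bE_\bp$ term. Your $H^{s-1}$ duality idea is creative, but it is the gradient structure of the interpolation error, not negative-norm cancellation, that delivers the sharp $h^{1/2}$ rate.
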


\subsubsection{Results on graded meshes}
Based on the weighted Sobolev regularity result (Theorem \ref{thm:weighted_sobolev_estimate} and Theorem \ref{thm:weighted_sobolev-p}), we could improve the convergence rates by employing the following \emph{graded meshes}, which is widely used for problems with solution singularities: A shape regular and local quasi-uniform partition $\mathcal{T}_h$ is called a graded mesh, if there is a number $\mu \geq 1$ such that for any $T \in\mathcal{T}_h$,
\begin{equation}\label{eq:graded_mesh}
		h_T \eqsim
		\left\{
		\begin{aligned}
			&h^\mu \quad &\text{ if } T\cap \partial \Omega \neq \varnothing,\\
			& h\text{dist}(T,\partial \Omega)^{\frac{\mu-1}{\mu}}\quad & \text{ if } T\cap \partial \Omega = \varnothing.
		\end{aligned}
	\right.
\end{equation}

According to Remark \ref{rmk:LDG}, the dimension of the resulting algebraic system equals $\dim V_h$, and the total number of degrees of freedom is therefore set to $N := \dim V_h$. The construction \eqref{eq:graded_mesh} yields \cite{babuvska1979direct}:
\begin{equation}\label{eq:graded_number}
	N \eqsim \#\mathcal{T}_h \eqsim 
	\begin{cases}
		h^{-n} & \text{if } \mu <  \frac{n}{n-1},\\
		h^{-n} |\log h|  & \text{if } \mu = \frac{n}{n-1},\\
		h^{(1-n)\mu} & \text{if } \mu > \frac{n}{n-1}.
	\end{cases}
\end{equation}
The relationship restricts the graded parameter $\mu \leq \frac{n}{n-1}$, since the larger parameter will not be benefit for improving the convergence rate, see \cite{acosta2017fractional,borthagaray2021local,han2022monotone}.
\begin{theorem}[error estimate on graded meshes]\label{tm:graded_estimate}
Let $\Omega \subset \mathbb{R}^n$ be a bounded Lipschitz domain satisfying the exterior ball condition.
Let $\beta>0$ and a family of graded triangulations 
$\{\mathcal{T}_h\}$ with grading parameter $\mu$ be chosen such that
	\begin{equation}\label{eq:parameter_condition}
	\beta\geq 
	\begin{cases}
		2-2s & \text{if } n=1,\\
		\frac{n}{2(n-1)} -s & \text{if } n\geq2,
	\end{cases}
	\quad \text{and} \quad
	\mu = \begin{cases}
		4-2s & \text{if } n=1,\\
		\frac{n}{n-1} & \text{if }n\geq 2.
	\end{cases}
	\end{equation}
   For $f \in C^\beta(\overline{\Omega})$, let $(\bsigma,\bp,u)\in \bSigma\times \bQ \times V$ be the solution of (\ref{eq:mixed_fractional_problem}).  Take the local spaces 
    $$
	\bSigma(T) = \boldsymbol{\mathcal{P}}_0(T), \quad \bQ(T) = \boldsymbol{\mathcal{P}}_1(T), \quad V(T) = \mathcal{P}_1(T), 
    $$
   and choose the parameters as
   \begin{equation} \label{eq:parameter-C}
   \begin{aligned}
   C_{11}|_F &\eqsim h_{T}^{1-2s} \text{ for } F \subset \partial T, \\
   C_{\rm s}|_T & \eqsim 
   \begin{cases}
   h_T^{2-2s} & \text{if } n=1,\\
   h_T^{2\theta} & \text{if } n \geq 2, \theta \geq \max\{\frac{n}{2(n-1)}, s\} - 1.
   \end{cases}
   \end{aligned}
   \end{equation}
   With these choices, let $(\bsigma_h,\bp_h,u_h)\in \bSigma_h\times \bQ_h\times V_h$ be the approximation solution of \eqref{eq:short_LDG_scheme} on $\mathcal{T}_h$.
   Then, we have
    \begin{subequations} \label{eq:result-graded-h}
\begin{align} 
	|(\bsigma-\bsigma_h,u-u_h)|_{\mathcal{A}} & \lesssim
	\begin{cases}
	h^{2-s} |\log h|  \|f\|_{C^\beta(\overline{\Omega})} & \text{if } n=1, \\
	h^{\frac{n}{2(n-1)}} |\log h| \|f\|_{C^\beta(\overline{\Omega})} & \text{if } n\geq 2,
	\end{cases}  \label{eq:energy-result-graded} \\
	\|u - u_h\|_{L^2(\Omega)} & \lesssim
	\begin{cases}
	h^{\frac{5}{2}-s} |\log h|^{\frac32}  \|f\|_{C^\beta(\overline{\Omega})} & \text{if } n=1, \\
	h^{\frac{n}{2(n-1)} + \frac12} |\log h|^{\frac32} \|f\|_{C^\beta(\overline{\Omega})} & \text{if } n\geq 2.
	\end{cases} 
	 \label{eq:L2-result-graded}
\end{align}
\end{subequations}
Here, the hidden constant $C = C(\Omega, n, s, \beta, \sigma)$.
In terms of number of degrees of freedom $N$, the estimates \eqref{eq:result-graded-h} read
    \begin{subequations} \label{eq:result-graded-N}
\begin{align} 
	|(\bsigma-\bsigma_h,u-u_h)|_{\mathcal{A}} & \lesssim
	\begin{cases}
	N^{-(2-s)} (\log N) \|f\|_{C^\beta(\overline{\Omega})} & \text{if } n=1, \\
	N^{-\frac{1}{2(n-1)}} (\log N)^{\frac{1}{2(n-1)} + 1} \|f\|_{C^\beta(\overline{\Omega})} & \text{if } n\geq 2,
	\end{cases}  \label{eq:energy-result-graded2}  \\
	\|u - u_h\|_{L^2(\Omega)} & \lesssim 
	\begin{cases}
	N^{-(\frac52-s)} (\log N)^{\frac32} \|f\|_{C^\beta(\overline{\Omega})} & \text{if } n=1, \\
	N^{-\frac{1}{2(n-1)} - \frac{1}{2n}} (\log N)^{\frac{1}{2(n-1)} + \frac32} \|f\|_{C^\beta(\overline{\Omega})} & \text{if } n\geq 2.
	\end{cases} 
	 \label{eq:L2-result-graded2}
\end{align}
\end{subequations}
\end{theorem}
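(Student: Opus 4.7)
My plan is to follow the architecture of the proof of Theorem~\ref{tm:uniform_estimate}, but with the unweighted Sobolev regularity \eqref{eq:sobolev-regularity-all} replaced by the weighted regularity results of Theorems~\ref{thm:weighted_sobolev_estimate} and~\ref{thm:weighted_sobolev-p}, and with the interpolation analysis performed element by element on the graded mesh \eqref{eq:graded_mesh}. First I introduce the $L^2$ projections $\bPi_{\bSigma}^0$ and $\bPi_{\bQ}^0$ onto $\bSigma_h$ and $\bQ_h$, together with a Scott--Zhang-type interpolant $\Pi_V$ onto $V_h$ preserving homogeneous boundary data. Subtracting \eqref{eq:mixed_fractional_problem1} from \eqref{eq:short_LDG_scheme} yields the Galerkin orthogonalities for $(\bsigma-\bsigma_h,\bp-\bp_h,u-u_h)$; testing them with the discrete triple $(\bsigma_h-\bPi_{\bSigma}^0\bsigma,\,\bp_h-\bPi_{\bQ}^0\bp,\,u_h-\Pi_V u)$ and summing produces a discrete energy identity of the form
\[
|(\bsigma-\bsigma_h,u-u_h)|_{\mathcal{A}}^2 + a_{\rm s}(\bp-\bp_h,\bp-\bp_h) \lesssim \mathcal{E}_h(u,\bsigma,\bp),
\]
where $\mathcal{E}_h$ collects interpolation errors for $u$ in the DG-energy norm (including the $C_{11}$-weighted jumps), for $\bsigma$ in the $(I_{2-2s}\cdot,\cdot)^{1/2}$ seminorm---controlled by the $\widetilde H^s$-norm via Lemma~\ref{lem:riesz_grad}---and for $\bp$ in the $C_{\rm s}$-weighted stabilization seminorm.

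The core of the argument is the bound on $\mathcal{E}_h$. On each shape-regular element $T$, standard interpolation yields local rates of the form $h_T^{t}|v|_{H^t(T)}$. The grading \eqref{eq:graded_mesh} gives $h_T\eqsim h^\mu$ on boundary elements and $h_T\lesssim h\,\text{dist}(T,\partial\Omega)^{(\mu-1)/\mu}$ on interior ones, which, combined with the integrability identity \eqref{eq:delta-integral}, converts the weighted regularity from Theorems~\ref{thm:weighted_sobolev_estimate} and~\ref{thm:weighted_sobolev-p} into global approximation rates. For $u$ and $\bsigma$ I will let $\gamma$ approach $s+\tfrac12$ from below to exploit the maximal $\widetilde H^{s+\frac12+\gamma-\varepsilon}_\gamma$ regularity; for $\bp$ I will take $\gamma$ slightly above $s-\tfrac12$ so that Theorem~\ref{thm:weighted_sobolev-p} yields weighted $H^{1+\beta-\varepsilon}_\gamma$ regularity, which justifies the use of $\boldsymbol{\mathcal{P}}_1$ for $\bQ(T)$. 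The grading exponents in \eqref{eq:parameter_condition} are exactly those that balance the boundary-element and interior-element contributions, and the stabilization scale $C_{\rm s}\eqsim h_T^{2\theta}$ in \eqref{eq:parameter-C} is tuned so that the $(\bI-\bPi_{\bSigma}^0)\bp$ interpolation error contributes at most at the leading rate. The $|\log h|$ factors will arise from the vanishing denominators in \eqref{eq:weighted-regularity-u}--\eqref{eq:weighted-regularity-p} as $t$ approaches its critical threshold and, in dimensions $n\geq 2$, from the logarithmic factor in $N\eqsim h^{-n}|\log h|$.

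For the $L^2$ estimate I will invoke a duality argument. Letting $z$ solve the self-adjoint dual problem $(-\Delta)^s z=u-u_h$ in $\Omega$, $z=0$ in $\Omega^c$, with LDG approximation $(\bsigma^z_h,\bp^z_h,z_h)$, the Galerkin orthogonalities for both the primal and dual errors will yield
\[
\|u-u_h\|_{L^2(\Omega)}^2 \lesssim |(\bsigma-\bsigma_h,u-u_h)|_{\mathcal{A}}\,|(\bsigma^z-\bsigma^z_h,z-z_h)|_{\mathcal{A}}.
\]
Since only $u-u_h\in L^2(\Omega)$ appears as the dual datum, $z$ enjoys only the unweighted regularity \eqref{eq:sobolev-regularity-all}, so its LDG energy error is of order $h^{1/2}|\log h|^{1/2}\|u-u_h\|_{L^2(\Omega)}$; combined with the primal energy bound this yields \eqref{eq:L2-result-graded}. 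The conversion \eqref{eq:result-graded-N} is then a direct substitution of \eqref{eq:graded_number}: for $n=1$ one has $N\eqsim h^{-1}$, while for $n\geq 2$ with $\mu=n/(n-1)$ one has $N\eqsim h^{-n}|\log h|$, so that $h\eqsim (N/\log N)^{-1/n}$ and the logarithmic powers combine as stated.

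The hard part will be the element-wise weighted interpolation analysis underpinning the second paragraph. The distinct maximal differentiability orders of the two vector variables---$\bsigma$ never reaches $H^1$, whereas $\bp$ is nearly $H^2_\gamma$ for $\gamma>s-\tfrac12$---force different polynomial degrees in $\bSigma(T)$ and $\bQ(T)$ and hence the stabilization term $a_{\rm s}$. Choosing $C_{\rm s}$ large enough to control $(\bI-\bPi_{\bSigma}^0)\bp_h$ in the discrete energy identity while keeping the interpolation error of $\bp$ from dominating the target rate is the most delicate balance; it is precisely what dictates the exponents prescribed in \eqref{eq:parameter-C}, and along with the weighted summation on the graded mesh constitutes the main technical content of the proof.
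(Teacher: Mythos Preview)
Your overall architecture (energy identity plus duality, weighted regularity pushed through graded-mesh interpolation) is right, but there is a genuine gap in the choice of interpolant for $\bsigma$. You propose to test with $\bsigma_h-\bPi_{\bSigma}^0\bsigma$, i.e.\ to set $\bPi\bsigma=\bPi_{\bSigma}^0\bsigma$. With this choice the interpolation defect $\be_{\bsigma}=\bPi_{\bSigma}^0\bsigma-\bsigma$ is \emph{not} a gradient, so Lemma~\ref{lem:riesz_grad} does not apply to it and the promised control of $a(\be_{\bsigma},\be_{\bsigma})^{1/2}$ by an $\widetilde H^s$-norm fails. Equally damaging, the cross term $(\be_{\bsigma},\bE_{\bp})+b(e_u,\bE_{\bp})$ arising from the second error equation no longer vanishes; after integrating by parts (using that $e_u=\Pi^{\rm SZ}u-u$ is continuous) you are left with $(\bPi_{\bSigma}^0\bsigma-\nabla\Pi^{\rm SZ}u,\,\bE_{\bp})$, which forces you to control $\|\bE_{\bp}\|_{L^2(\Omega)}$---a quantity the discrete energy on the left-hand side does not see, since $a_{\rm s}(\bE_{\bp},\bE_{\bp})$ only measures $(\bI-\bPi_{\bSigma}^0)\bE_{\bp}$. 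The paper fixes both problems in one stroke by taking $\bPi\bsigma:=\nabla\Pi^{\rm SZ}u$ instead of the $L^2$ projection; then $\be_{\bsigma}=\nabla e_u$, Lemma~\ref{lem:riesz_grad} gives $a(\be_{\bsigma},\be_{\bsigma})^{1/2}\eqsim\|e_u\|_{\widetilde H^s(\Omega)}$ directly, and the cross term is identically zero. (Since $\Pi^{\rm SZ}u$ is continuous, there are also no $C_{11}$-weighted jump terms coming from $u$; the jump contribution in $\mathcal E_h$ arises solely from $\bp$.)

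For the duality step, the clean Aubin--Nitsche product $|(\bsigma-\bsigma_h,u-u_h)|_{\mathcal A}\cdot|(\bsigma^z-\bsigma^z_h,z-z_h)|_{\mathcal A}$ you write down is not what this mixed, stabilized scheme actually delivers. The paper does not introduce an LDG approximation of the dual; it tests the \emph{exact} dual triple $(\bPhi,\bPsi,\varphi)$ against $(\bsigma-\bsigma_h,\,-(\bp-\bp_h),\,u-u_h)$ and reuses the primal error equations, which leaves a residual stabilization piece $a_{\rm s}(\bp_h,\bPi_{\bQ}^0\bPsi)$ that must be estimated separately via the $a_{\rm s}(\bE_{\bp},\bE_{\bp})$ control already obtained in the energy step together with the unweighted regularity $\bPsi\in \boldsymbol{H}^{3/2-s-\varepsilon}(\Omega)$. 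The remaining terms then give exactly the $h^{1/2}|\log h|^{1/2}$ gain you expect.
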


\section{Proofs}\label{sc:triangle}
This section presents the proofs of the two main theorems. 

\subsection{Interpolations}
We begin by recalling several local $L^{2}$-projection estimates onto the
polynomial spaces $\mathcal{P}_{k}(T)$, $k=0,1$.  
Let $\Pi_{k}^{0}$ denote the corresponding $L^{2}$-projection. 
We begin by recalling the following classical result:
\begin{subequations} \label{eq:Pi-uniform}
\begin{align} 
\|w - \Pi_0^0 w\|_{L^2(T)} 
  &\leq C(n,\sigma, t) h_T^{\min\{t,1\}}\|w\|_{H^{t}(T)} \qquad t \ge 0, \label{eq:Pi-T} \\ 
\|w - \Pi_0^0 w\|_{L^2(\partial T)} 
  &\leq C(n,\sigma, t) h_T^{\min\{t,1\} - \frac12}\|w\|_{H^{t}(T)} ~~ t > \tfrac{1}{2}. \label{eq:Pi-partial-T}
\end{align}
\end{subequations}

We now present local interpolation estimates in the weighted Sobolev space \eqref{eq:weighted_norm}, which only needs to be carried out elementwise on each $T$.
By the weighted fractional Poincaré inequality (see \cite[Proposition 4.8]{acosta2017fractional}), it follows that
\begin{equation} \label{eq:Pi-0-weighted}
  \|w - \Pi_{0}^0 w\|_{L^{2}(T)} \le C(n, \sigma, \ell, \gamma) h_{T}^{\ell-\gamma} |w|_{H_{\gamma}^{\ell}(T)} \quad \forall \ell \in (0,1], \gamma \in [0, \ell).
\end{equation}
Further, using trace theorem, scaling argument and Bramble-Hilbert lemma, the following local interpolation estimate holds for $\Pi_{1}^{0}$ (cf. \cite{acosta2017fractional,borthagaray2019weighted,borthagaray2021local}): 
\begin{equation} \label{eq:Pi-1-weighted}
\|w - \Pi_1^0 w\|_{L^2(\partial T)} \leq C(n,\sigma, t, \gamma) h_T^{t-\frac12 - \gamma} |w|_{H^t_\gamma(T)}  \quad  \forall t \leq 2, \gamma \in [0, t - \tfrac12).
\end{equation}     

Since $V_h$ is the discontinuous piecewise $\mathcal{P}_1$ finite element space, we directly employ the Scott--Zhang interpolation operator $\Pi^{\rm SZ}$ as discussed in \cite[Section 4]{acosta2017fractional}, whose range is the conforming space $\mathbb{V}_h^{\rm c} := \{ v\in C^0(\Omega) : v|_T \in \mathcal{P}_1(T) \ \text{for all } T\in \mathcal{T}_h \}$.
For completeness, we briefly outline the interpolation estimates; see 
\cite{acosta2017fractional, borthagaray2021local} and the references therein for details. 
First, the fractional Hardy inequality allows us to replace the control of 
$\|\cdot\|_{\widetilde{H}^s(\Omega)}$ by that of $\|\cdot\|_{H^s(\Omega)}$. 
The latter can be localized by means of patchwise estimates, reducing to 
the collection of sets $\{T \times S_T\}_{T \in \mathcal{T}_h}$ together with local zero-order 
contributions ($S_T$ denotes the first ring of $T$). Establishing suitable local interpolation bounds on each patch then leads to the 
following interpolation estimates on quasi-uniform meshes:
\begin{equation} \label{eq:SZ-uniform}
\|w - \Pi^{\rm SZ}w\|_{\widetilde{H}^s(\Omega)} \leq C(\Omega, n,s,\sigma,t) h^{t-s}|w|_{H^t(\Omega)}, \quad \frac12<s<1 \text{ and } t\in(s,2]. 
\end{equation}

On the graded meshes specified in \eqref{eq:graded_mesh}, the local interpolation bounds need to be treated in two separate cases: ($S_{S_T}$ denotes the second ring of $T$)
$$
\begin{aligned}
S_{S_T} \text{ touches boundary}: \quad h_T^{t-s-\gamma}
  |u|_{H_\gamma^{t}(S_{S_T})} &\eqsim h^{\mu (t-s-\gamma)} |u|_{H_\gamma^{t}(S_{S_T})}, \\
\text{otherwise}: \qquad h_T^{\,t-s}\,|u|_{H^{t}(S_{S_T})} &\eqsim h^{t-s}\delta(T,\partial\Omega)^{(t-s)\frac{\mu-1}{\mu}}
        |u|_{H^{t}(S_{S_T})}.
\end{aligned}
$$
Balancing the contributions by choosing $\gamma = (t-s)\frac{\mu-1}{\mu}$, we obtain
\begin{equation} \label{eq:SZ-graded}
\|w - \Pi^{\rm SZ}w\|_{\widetilde{H}^s(\Omega)} \leq C(\Omega, n,s,\sigma,t)h^{t-s} |w|_{H_\gamma^t(\Omega)}, \quad \frac12<s<1 \text{ and } t\in(s,2]. 
\end{equation}
It is noted that the parameter $t$ must satisfy the conditions stated in Theorem \ref{thm:weighted_sobolev_estimate} (weighted Sobolev regularity of $u$), i.e., $t < s + \frac12 + \gamma$, which leads to $t-s < \frac{\mu}{2}$. Combined with the requirement $\mu \leq \frac{n}{n-1}$ in~\eqref{eq:graded_number}, an optimal choice of parameters is then obtained in the subsequent proofs.

\subsection{Error equations}
Firstly, we consider the error equations. For $\bsigma = \nabla u$ and $\bp = I_{2-2s}\nabla u$, we have the following consistency:
\begin{equation}\label{eq:fracDG_consistency}
\left\{
\begin{array}{r@{\;}c@{\;}l @{\;}c@{\;}l @{\;}c@{\;}l @{\quad}l}
  a(\bsigma,\btau_h) & - & (\bp,\btau_h) &   &             & = & 0        & \forall \btau_h \in \bSigma_h,\\
  (\bsigma,\bq_h)    &   &               & + & b(u,\bq_h)  & = & 0        & \forall \bq_h \in \bQ_h,\\
                     &   - & b(v_h,\bp)   & + & c(u,v_h)    & = & (f,v_h)  & \forall v_h \in V_h .
\end{array}
\right.
\end{equation}
Subtracting the LDG scheme \eqref{eq:short_LDG_scheme} from the consistency relation \eqref{eq:fracDG_consistency} yields the error equations:
\begin{equation}\label{eq:error_equation}
\left\{
\begin{array}{r@{\;}c@{\;}l @{\;}c@{\;}l @{\;}c@{\;}l @{\quad}l}
  a(\bsigma-\bsigma_h,\btau_h) & - & (\bp-\bp_h,\btau_h) &   &                  & = & 0 & \forall \btau_h \in \bSigma_h,\\
  (\bsigma-\bsigma_h,\bq_h)    & - & a_{\rm s}(\bp_h, \bq_h)                      & + & b(u-u_h,\bq_h)  & = & 0 & \forall \bq_h \in \bQ_h,\\
                               & - & b(v_h,\bp-\bp_h)     & + & c(u-u_h,v_h)    & = & 0 & \forall v_h \in V_h .
\end{array}
\right.
\end{equation}

In the following, we take $\bPi\bsigma:=\nabla\Pi^{\rm SZ}u \in \bSigma_h $, $\Pi u :=\Pi^{\rm SZ}u \in V_h $ and $\bPi\bp:=\bPi_{\bQ}^{0} \bp \in \bQ_h$, where $\bPi_{\bQ}^{0}$ denotes the $L^2$-projection onto $\bQ_h$. We also introduce the following abbreviated notation
\[
\begin{aligned}
	\bE_\bsigma &:= \bPi\bsigma - \bsigma_h, &  \bE_\bp &:= \bPi \bp - \bp_h, & E_u &:= \Pi u-u_h,\\
	\be_{\bsigma}&: = \bPi\bsigma - \bsigma, & \be_\bp&:= \bPi\bp- \bp, & e_u &:=\Pi u - u.
\end{aligned}
\]
From the error equations \eqref{eq:error_equation}, we obtain the following system: $\forall (\btau_h, \bq_h, v_h) \in \bSigma_h \times \bQ_h \times V_h$, 
\begin{subequations}\label{eq:energy_primal}
	\begin{align}
		a(\bE_\bsigma,\btau_h)-(\bE_\bp,\btau_h) &= a(\be_\bsigma,\btau_h)- (\be_\bp,\btau_h), \label{eq:energy_primal_1}\\
		(\bE_\bsigma ,\bq_h) + a_{\rm s}(\bE_\bp, \bq_h) + b(E_u ,\bq_h) &= (\be_\bsigma,\bq_h) + a_{\rm s}(\bPi_{\bQ}^0 \bp, \bq_h) + b(e_u ,\bq_h),\label{eq:energy_primal_2}\\
		-b(v_h,\bE_\bp) + c(E_u, v_h) &= -b(v_h,\be_\bp) + c(e_u, v_h) \label{eq:energy_primal_3}.
	\end{align}
\end{subequations} 

\subsection{Energy error estimates}
\begin{lemma}[energy error estimates] \label{lm:energy_estimate_triangle_LDG}
Under the condition in Theorem \ref{tm:uniform_estimate} or Theorem \ref{tm:graded_estimate}, we have the following estimate:
\begin{equation} \label{eq:energy-general}
\begin{aligned}
  &\quad  |(\bE_{\bsigma}, E_u)|_{\mathcal{A}} + a_{\rm s}(\bE_{\bp},\bE_{\bp})^{\frac12} \lesssim \|u - \Pi^{\rm SZ} u\|_{\widetilde{H}^s(\Omega)}  \\
         &+
         \left\{
         \begin{aligned}
         \Big(\sum_{T \in\mathcal{T}_h}& C_{11}^{-1}\|\bp - \bPi_{\bQ}^0 \bp\|^2_{L^2(\partial T)} \Big)^{\frac12}  \qquad\qquad\qquad\qquad\qquad\qquad~ \text{if } \bSigma_h = \bQ_h, \\
         \Big(\sum_{T \in\mathcal{T}_h}& C_{11}^{-1}\|\bp - \bPi_{\bQ}^0 \bp\|^2_{L^2(\partial T)} + \sum_{T\in \mathcal{T}_h} C_{\rm s}\|\bp - \bPi_\bSigma^0 \bp\|_{L^2(T)}^2 \Big)^{1/2} ~ \text{if }  \bSigma_h \subsetneq \bQ_h.
         \end{aligned}
         \right. 
\end{aligned}
\end{equation}
\end{lemma}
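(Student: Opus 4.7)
The plan is to apply the standard DG energy identity: test the error equations \eqref{eq:energy_primal_1}--\eqref{eq:energy_primal_3} with $(\btau_h, \bq_h, v_h) = (\bE_\bsigma, \bE_\bp, E_u)$ and add the three. The cross-terms $\pm(\bE_\bp, \bE_\bsigma)$ and $\pm b(E_u, \bE_\bp)$ telescope, so the left-hand side collapses to $|(\bE_\bsigma, E_u)|_{\mathcal{A}}^2 + a_{\rm s}(\bE_\bp, \bE_\bp)$, while the right-hand side consists of the seven interpolation-error terms
\[
a(\be_\bsigma,\bE_\bsigma) - (\be_\bp,\bE_\bsigma) + (\be_\bsigma,\bE_\bp) + a_{\rm s}(\bPi_\bQ^0\bp,\bE_\bp) + b(e_u,\bE_\bp) - b(E_u,\be_\bp) + c(e_u,E_u).
\]

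The heart of the argument is to recognize that most of these vanish by structural considerations. First, $(\be_\bp, \bE_\bsigma)=0$ by elementwise $L^2$-orthogonality of $\bPi_\bQ^0$, since $\bE_\bsigma \in \bSigma_h \subset \bQ_h$. Second, writing $(\be_\bsigma,\bE_\bp) = (\nabla e_u,\bE_\bp)_{\mathcal{T}_h}$ and integrating by parts elementwise, the continuity of $e_u=\Pi^{\rm SZ}u-u$ across interior faces and its vanishing on $\partial\Omega$ produce exactly $-b(e_u,\bE_\bp)$ (recognizing that $\vavg{e_u}=e_u$, $\llbracket e_u\rrbracket=0$ in the definition of $b$), hence $(\be_\bsigma,\bE_\bp)+b(e_u,\bE_\bp)=0$. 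Third, $c(e_u,E_u)=0$ for the same conformity reason. Fourth, for $b(E_u,\be_\bp)$ we use the second form of $b$ in \eqref{eq:definition_abc}: the volume contribution $(\nabla E_u,\be_\bp)_{\mathcal{T}_h}$ vanishes because $\nabla E_u|_T \in \nabla V(T)\subset \bQ(T)$ is $L^2$-orthogonal to $\be_\bp=\bPi_\bQ^0\bp-\bp$ on each element, and the $\partial\Omega$ integral vanishes since $E_u|_{\partial\Omega}=0$, leaving only the interior-face term $\int_{\mathcal{F}_h^0}(\vavg{\be_\bp}-\bbeta[\be_\bp])\cdot\llbracket E_u\rrbracket$.

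What remains is to estimate the three surviving contributions. For $a(\be_\bsigma,\bE_\bsigma)$, Cauchy--Schwarz in the positive bilinear form $a$ together with Lemma \ref{lem:riesz_grad} yields the bound $\|u-\Pi^{\rm SZ}u\|_{\widetilde{H}^s(\Omega)}\,|(\bE_\bsigma,E_u)|_{\mathcal{A}}$. The surviving face term from $b(E_u,\be_\bp)$ is handled by a weighted Cauchy--Schwarz with weight $C_{11}$, producing $(\sum_T C_{11}^{-1}\|\bp-\bPi_\bQ^0\bp\|_{L^2(\partial T)}^2)^{1/2}\, c(E_u,E_u)^{1/2}$ after translating from face-based to element-based trace norms. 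The stabilization term $a_{\rm s}(\bPi_\bQ^0\bp,\bE_\bp)$ vanishes outright when $\bSigma_h=\bQ_h$, since $(\bI-\bPi_\bSigma^0)\bPi_\bQ^0=0$ there; in the general case, the nesting identity $\bPi_\bSigma^0\bPi_\bQ^0=\bPi_\bSigma^0$ gives $(\bI-\bPi_\bSigma^0)\bPi_\bQ^0\bp=\bPi_\bQ^0\bp-\bPi_\bSigma^0\bp$, which is bounded in $L^2(T)$ by $2\|\bp-\bPi_\bSigma^0\bp\|_{L^2(T)}$ via the triangle inequality and the best-approximation property, so that Cauchy--Schwarz on $a_{\rm s}$ gives $a_{\rm s}(\bE_\bp,\bE_\bp)^{1/2}(\sum_T C_{\rm s}\|\bp-\bPi_\bSigma^0\bp\|_{L^2(T)}^2)^{1/2}$. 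Young's inequality then absorbs the $|(\bE_\bsigma,E_u)|_{\mathcal{A}}$ and $a_{\rm s}(\bE_\bp,\bE_\bp)^{1/2}$ factors into the left-hand side, yielding \eqref{eq:energy-general}. The main bookkeeping difficulty is confirming the cancellation $(\be_\bsigma,\bE_\bp)+b(e_u,\bE_\bp)=0$ and the vanishing volume contribution in $b(E_u,\be_\bp)$; both rely essentially on the conformity of $\Pi^{\rm SZ}$ and the inclusion $\nabla V(T)\subset\bQ(T)$, which are already built into the hypotheses.
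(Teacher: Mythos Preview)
Your proposal is correct and follows essentially the same approach as the paper's own proof: test the error equations with $(\bE_\bsigma,\bE_\bp,E_u)$, identify the same cancellations ($I_2=I_3=I_6=0$ in the paper's labeling), and estimate the three surviving terms $a(\be_\bsigma,\bE_\bsigma)$, $a_{\rm s}(\bPi_\bQ^0\bp,\bE_\bp)$, and the face contribution from $b(E_u,\be_\bp)$ exactly as you describe.

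One small correction: you claim the $\partial\Omega$ integral in $b(E_u,\be_\bp)$ vanishes because $E_u|_{\partial\Omega}=0$, but this is not true --- $\Pi^{\rm SZ}u$ vanishes on $\partial\Omega$, yet the DG function $u_h$ generally does not, so $E_u|_{\partial\Omega}=-u_h|_{\partial\Omega}\neq 0$. The paper does not claim this cancellation; it simply keeps the boundary contribution as part of the face sum over all of $\mathcal{F}_h$ and absorbs it into $c(E_u,E_u)^{1/2}$ (recall $c$ is defined over $\mathcal{F}_h$, including boundary faces). Your final weighted Cauchy--Schwarz bound $\big(\sum_T C_{11}^{-1}\|\bp-\bPi_\bQ^0\bp\|_{L^2(\partial T)}^2\big)^{1/2}\, c(E_u,E_u)^{1/2}$ is already element-based and therefore automatically captures the boundary piece, so this slip does not affect the outcome.
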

\begin{proof}
Taking \(\btau_h = \bE_\bsigma\), \(\bq_h = \bE_\bp\), and \(v_h = E_u\) in \eqref{eq:energy_primal}, and summing the three equations, we derive the following energy identity:
\begin{equation}\label{eq:energy_equality_ldg}
	\begin{aligned}
		|(\bE_{\bsigma},E_u)|^2_{\mathcal{A}} + a_{\rm s}(\bE_{\bp}, \bE_{\bp})  &= a(\bE_\bsigma ,\bE_\bsigma) + a_{\rm s}(\bE_\bp, \bE_\bp) + c(E_u,E_u)\\
		& = \underbrace{a(\be_\bsigma,\bE_\bsigma)}_{I_1} - \underbrace{(\be_\bp,\bE_\bsigma)}_{I_2}
		+\underbrace{(\be_\bsigma, \bE_\bp)
		+ b(e_u ,\bE_\bp)}_{I_3}  \\
		& \quad + \underbrace{a_{\rm s}(\bPi\bp, \bE_\bp)}_{I_4} - \underbrace{b(E_u,\be_\bp)}_{I_5}+ \underbrace{c(e_u, E_u)}_{I_6}.
	\end{aligned}
\end{equation}
Next, we estimate the terms $I_i$ for $i = 1,\ldots, 6$.
\begin{itemize}
    \item Estimate of $I_1$: Noting that $\be_{\bsigma} := \nabla (\Pi^{\rm SZ} u  - u) = \nabla e_u$, then by the H\"older inequality and and Lemma \ref{lem:riesz_grad} (equivalent form of $\widetilde{H}^s$-norm), we obtain
    \[
    \begin{aligned}
        |I_1| &= \vert a(\be_{\bsigma},\bE_\bsigma)\vert \leq a(\bE_\bsigma,\bE_\bsigma)^{\frac12} \|I_{1-s}\nabla e_u\|_{L^2(\mathbb{R}^n)}\\
        &\leq Ca(\bE_\bsigma,\bE_\bsigma)^{\frac12} \|e_u\|_{\widetilde{H}^s(\Omega)} 
        \leq \frac{1}{2}a(\bE_\bsigma,\bE_\bsigma) + \tilde{C} \|e_u\|^2_{\widetilde{H}^s(\Omega)}.
    \end{aligned}
    \]
    \item Estimate of $I_2$: Since $\be_{\bp}= \bPi^0 \bp - \bp$ and $\bSigma(T) \subset \bQ(T)$, we have $I_2 = 0$. 
    \item Estimate of $I_3$: Since $e_u = u - \Pi^{\rm SZ} u$ is continuous and vanishes on $\partial \Omega$, and $\be_{\bsigma} = \nabla e_u$, we have:
    \[
    \begin{aligned}
        I_3 &= (\be_\bsigma,\bE_\bp)+b(e_u,\bE_\bp)\\
        &= \int_{\mathcal{F}_h^0}(\vavg{\bE_\bp} - \bbeta[\bE_\bp])\cdot \llbracket e_u\rrbracket \mathrm{d}s +\int_{\partial \Omega}e_u\bE_\bp\cdot \bn\mathrm{d}s = 0.
    \end{aligned}
    \]
    \item Estimate of $I_4$: When $\bSigma_h(T)=\bQ(T)$, the term $I_4$ vanishes identically. In contrast, if $\bSigma_h(T)\subsetneq \bQ(T)$, we obtain
\[
\| \bp - \bPi_\bSigma^0 \bp \|_{L^2(T)}^2 
  = \| \bp - \bPi_\bQ^0 \bp \|_{L^2(T)}^2 
  + \| (\bI - \bPi_\bSigma^0) \bPi_\bQ^0 \bp \|_{L^2(T)}^2.
\]
Therefore,
\[
\begin{aligned}
  |I_4| 
    &\le \frac{1}{2} a_{\rm s}(\bE_p, \bE_p) 
         + C a_{\rm s}(\bPi_\bQ^0 \bp, \bPi_\bQ^0 \bp) \\
    &\le \frac{1}{2} a_{\rm s}(\bE_p, \bE_p) 
         + C \sum_{T \in \mathcal{T}_h} C_s \, \|\bp - \bPi_\bSigma^0 \bp\|_{L^2(T)}^2.
\end{aligned}
\]
    \item Estimate of $I_5$: Since $\be_{\bp}= \bPi \bp - \bp$ is locally orthogonal to $\nabla V_h$, we then have
        \[
        \begin{aligned}
            |I_5| &= |b(E_u,\be_\bp)| = |(\nabla E_u,\be_\bp) -\int_{\mathcal{F}_h} (\vavg{\be_\bp}-\bbeta[\be_\bp])\cdot\llbracket E_u\rrbracket\,\mathrm{d}s|\\
            &= \left|\int_{\mathcal{F}_h} (\vavg{\be_\bp}-\bbeta[\be_\bp])\cdot\llbracket E_u\rrbracket\,\mathrm{d}s\right|.
        \end{aligned}
        \]
        It follows directly from the definition of $c(\cdot,\cdot)$ that
        $$
        |I_5| \leq \frac{1}{2}c(E_u,E_u) + C\sum_{T\in\mathcal{T}_h}C_{11}^{-1}\|\be_\bp\|^2_{L^2(\partial T)}.
        $$ 
        \item Estimate of $I_6$: We easily see that $\llbracket e_u\rrbracket|_{\mathcal{F}_h} = \llbracket u - \Pi^{\rm SZ} u \rrbracket|_{\mathcal{F}_h} = 0$ hence $I_6 = 0$.
\end{itemize}

Combining these estimates and moving the remaining terms to the left-hand side yields the desired estimates. 
\end{proof}

\begin{proof}[Proof of energy estimates in Theorem \ref{tm:uniform_estimate} and Theorem \ref{tm:graded_estimate}] 
We always take \\
$V(T) = \mathcal{P}_1(T)$ and $\boldsymbol{\Sigma}(T) = \boldsymbol{\mathcal{P}}_0(T)$.

\underline{On quasi-uniform meshes}, we take $C_{11} \eqsim h^{1-2s}$ and $\bQ(T)=\boldsymbol{\mathcal{P}}_0(T)$. 
Combining the Sobolev regularity estimate \eqref{eq:sobolev-regularity-all} with the approximation 
properties of the interpolation operators \eqref{eq:SZ-uniform} and \eqref{eq:Pi-partial-T} yields
\begin{equation} \label{eq:energy-uniform}
\begin{aligned}
\|u - \Pi^{\rm SZ}u \|_{\widetilde{H}^s(\Omega)} &\lesssim h^{\frac12-\varepsilon} \|u\|_{H^{s+\frac12 -\varepsilon}(\Omega)} \lesssim \frac{h^{\frac12-\varepsilon}} {\sqrt{\varepsilon}} \|f\|_{L^2(\Omega)}. \\
\Big(\sum_{T\in\mathcal{T}_h} C_{11}^{-1}\|\bp - \bPi_{\bQ}^0 \bp\|^2_{L^2(\partial T)} \Big)^{\frac12} & \lesssim h^{\frac12-\varepsilon} \|\bp \|_{H^{\frac32-s -\varepsilon}(\Omega)}
\lesssim \frac{h^{\frac12-\varepsilon}} {\sqrt{\varepsilon}} \|f\|_{L^2(\Omega)}.
\end{aligned}
\end{equation}
We further note that, in this case, $\bQ(T)=\boldsymbol{\mathcal{P}}_0(T)$ already makes full use of the Sobolev regularity of $\bp$ since $\frac32 - s < 1$.

\underline{On graded meshes}, we take $\bQ(T)=\boldsymbol{\mathcal{P}}_1(T)$, and the condition on $\beta$ together with the choice of $\mu$ is given by \eqref{eq:parameter_condition}.
We now select the regularity parameters optimally, leading to the following error estimate.
\begin{itemize}
\item In view of Theorem \ref{thm:weighted_sobolev_estimate} (weighted Sobolev regularity of $u$),  
the parameters are chosen as $t = 2 - \varepsilon$ and $\gamma = \frac32 - s - \frac{3-2s}{4-2s}\varepsilon$ for $n = 1$,  
while for $n \ge 2$ the choice $t = s + \frac{n}{2(n-1)} - n\varepsilon$ and 
$\gamma = \frac{1}{2(n-1)} - \varepsilon$ is adopted.  
With these choices, \eqref{eq:SZ-graded} implies that
\begin{equation} \label{eq:energy-u-graded}
\|u - \Pi^{\rm SZ}u\|_{\widetilde{H}^s(\Omega)} \lesssim 
\begin{cases}
\frac{1}{\varepsilon}h^{2-s-\varepsilon} \|f\|_{C^\beta(\overline{\Omega})} & \text{if }n=1,\\
\frac{1}{\varepsilon} h^{\frac{n}{2(n-1)} - n\varepsilon} \|f\|_{C^\beta(\overline{\Omega})} & \text{if }n\geq 2. 
\end{cases}
\end{equation}

\item Recall that $\bPi_{\bQ}^{0}$ is the local $L^{2}$-projection onto $\boldsymbol{\mathcal{P}}_{1}(T)$, and that
$C_{11}^{-1/2}|_F \eqsim h_{T}^{s-\frac12}$ for $F \subset \partial T$.  
We distinguish two types of elements in \eqref{eq:energy-general}. If $T\cap\partial\Omega\neq\varnothing$ (elements touching the boundary), then \eqref{eq:Pi-1-weighted} yields
$$
C_{11}^{-1/2}\|\bp - \bPi_{\bQ}^0 \bp\|_{L^2(\partial T)}  \lesssim
    h_T^{s -1 + t - \gamma} |\bp|_{H_{\gamma}^{t}(T)} \eqsim h^{\mu(s+t-1-\gamma)}|\bp|_{H_{\gamma}^{t}(T)}.
$$ 
If $T\cap\partial\Omega=\varnothing$ (interior elements), then \eqref{eq:Pi-1-weighted} yields
$$
C_{11}^{-1/2}\|\bp - \bPi_{\bQ}^0 \bp\|_{L^2(\partial T)}  \lesssim  h^{s+t-1} \delta(T, \partial \Omega)^{ \frac{\mu - 1}{\mu}(s+t-1)}|\bp|_{H^t(T)}.
$$ 
From the above estimates, the balancing condition yields $\gamma = \frac{\mu - 1}{\mu}(s+t-1)$.
In view of Theorem \ref{thm:weighted_sobolev-p} (weighted Sobolev regularity of $\bp$), the constraint $t < \gamma + \tfrac{3}{2} - s$
leads to the following optimal choices of parameters:
 $t = 3-2s-\varepsilon$, $\gamma = \frac32 -s -\frac{3-2s}{4-2s}\varepsilon$ for $n=1$, whereas for $n\ge 2$ we choose $t = 1 - s + \frac{n}{2(n-1)} - n\varepsilon$ and 
 $\gamma = \frac{1}{2(n-1)} -\varepsilon$. With these choices, we have
\begin{equation} \label{eq:energy-p-graded}
\begin{aligned}
 \Big( \sum_{T \in \mathcal{T}_h}  C_{11}^{-1}\|\bp - \bPi_{\bQ}^0 \bp\|_{L^2(\partial T)}^2 \Big)^{\frac12} &
 \lesssim 
\begin{cases}
h^{2-s-\varepsilon} |\bp|_{H^t_\gamma} & \text{if }n=1,\\
h^{\frac{n}{2(n-1)} - n \varepsilon} |\bp|_{H^t_\gamma} & \text{if }n\geq 2,
\end{cases} \\
(\text{by } \eqref{eq:weighted-regularity-p}) \quad
& \lesssim \begin{cases}
\frac{1}{\varepsilon}h^{2-s-\varepsilon} \|f\|_{C^\beta(\overline{\Omega})} & \text{if }n=1,\\
\frac{1}{\varepsilon} h^{\frac{n}{2(n-1)} - n \varepsilon} \|f\|_{C^\beta(\overline{\Omega})} & \text{if }n\geq 2. 
\end{cases}
\end{aligned}
\end{equation}
\item  For the last term, note that
  $\bSigma(T)=\boldsymbol{\mathcal{P}}_{0}(T)$. Moreover, recall that
    $C_s|_T \eqsim h_T^{2-2s}$ for $n=1$, while
 $C_s|_T \eqsim h_T^{2\theta}$ with $\theta \geq \max\{\frac{n}{2(n-1)}, s\} -1$ for $n\ge2$. We invoke \eqref{eq:Pi-0-weighted} directly and choose the parameters as follows:
 $\ell = 1-\varepsilon$,  $\gamma = \frac{3}{2} - s - \frac{3-2s}{4-2s}\,\varepsilon$ for $n=1$,
 and $\ell = 1 - \max\{0, s-\frac{n}{2(n-1)}\} - n\varepsilon$, 
 $\gamma = \frac{1}{2(n-1)} -\varepsilon$ for $n\ge2$. With these choices, we obtain
 \begin{equation} \label{eq:energy-sigma-graded}
\begin{aligned}
 \Big(\sum_{T \in \mathcal{T}_h} C_{s} \|\bp - \bPi_{\bSigma}^0 \bp\|_{L^2(T)}^2 \Big)^{\frac12}
 & \lesssim 
\begin{cases}
h^{2-s-\varepsilon} |\bp|_{H^\ell_\gamma} & \text{if }n=1,\\
h^{\frac{n}{2(n-1)} - n\varepsilon} |\bp|_{H^\ell_\gamma} & \text{if }n\geq 2,
\end{cases} \\
(\text{by } \eqref{eq:weighted-regularity-p}) \quad \lesssim &
\begin{cases}
\frac{1}{\varepsilon}h^{2-s-\varepsilon} \|f\|_{C^\beta(\overline{\Omega})} & \text{if }n=1,\\
\frac{1}{\varepsilon}h^{\frac{n}{2(n-1)} - n\varepsilon} \|f\|_{C^\beta(\overline{\Omega})} & \text{if }n\geq 2.
\end{cases}
\end{aligned}
\end{equation}
\end{itemize}

Upon taking $\varepsilon = |\log h|^{-1}$, Lemma \ref{lm:energy_estimate_triangle_LDG} and \eqref{eq:energy-uniform} immediately yields \eqref{eq:energy-result-uniform}. On the other hand, \eqref{eq:energy-u-graded}, \eqref{eq:energy-p-graded} and \eqref{eq:energy-sigma-graded} lead to \eqref{eq:energy-result-graded}.
\end{proof}

\subsection{$L^2$ error estimate} To derive the error estimate for \(\|u - u_h\|_{L^2(\Omega)}\), we employ a duality argument by introducing the following adjoint problem: Find \(\varphi\) such that
\begin{equation}\label{eq:adjoint_problem}
    \begin{aligned}
        (-\Delta)^s\varphi &= u - u_h \quad \text{in } \Omega,\\
        \varphi &= 0 \quad \text{on } \Omega^c.
    \end{aligned}
\end{equation}
This adjoint problem will serve as the foundation for the \(L^2\)-error analysis through duality techniques, allowing us to relate the approximation error to known regularity results for fractional elliptic problems. Let \(\bPhi = \nabla\varphi\) and \(\bPsi = I_{2-2s}\bPhi\).

For the dual problem, we again employ the Theorem \ref{thm:sobolev_regularity} (Sobolev regularity on Lipschitz domains) with the right-hand side $u-u_h \in L^2(\Omega)$, yielding
\begin{equation}\label{eq:epsilon_right_hand_side_adjoint}  
\|\varphi\|_{H^{s+\frac12-\varepsilon}(\Omega)} + \|\bPhi\|_{H^{s-\frac{1}{2}-\varepsilon}(\Omega)} + \|\bPsi\|_{H^{\frac{3}{2}-s-\varepsilon}(\Omega)} \leq \frac{1}{\sqrt{\varepsilon}} \|u-u_h\|_{L^2(\Omega)}.  
\end{equation}

 Following consistency results analogous to \eqref{eq:mixed_fractional_problem} and \eqref{eq:fracDG_consistency}, we obtain the auxiliary problem:
\begin{equation}\label{eq:auxiliary_problem}
\left\{
\begin{array}{r@{\;}c@{\;}l @{\;}c@{\;}l @{\;}c@{\;}l @{\quad}l}
  a(\bPhi,\btau) & - & (\bPsi,\btau) &   &                  & = & 0          & \forall \btau \in \bSigma_h+\bSigma,\\
  (\bPhi,\bq)    &   &               & + & b(\varphi,\bq)   & = & 0          & \forall \bq \in \bQ_h+\bQ,\\
                 & - & b(v,\bPsi)    & + & c(\varphi,v)     & = & (u-u_h,v)  & \forall v \in V_h+V .
\end{array}
\right.
\end{equation}
Next, we take \(\Pi \varphi = \Pi^{\rm SZ}\varphi\), \(\bPi \bPhi = \nabla \Pi^{\rm SZ} u\), and \(\bPi \bPsi = \bPi_\bQ^0\bPsi\). Additionally, to simplify notation, we introduce the following abbreviations:  
\[
\be_{\bPhi} := \nabla (\Pi^{\rm SZ}\varphi - \varphi),  ,\quad\be_{\bPsi} := \bPi_{\bQ}^0\bPsi - \bPsi, \quad e_{\varphi} := \Pi^{\rm SZ}\varphi - \varphi.
\]

For the auxiliary problem \eqref{eq:auxiliary_problem}, taking test functions $\btau = \bsigma-\bsigma_h\), \(\bq = -(\bp-\bp_h)$, and $v = u-u_h$, then summing the three equations and applying the error equation \eqref{eq:error_equation} yields the $L^2$-error identity:
\begin{equation}\label{eq:L2_estimate_identity}
    \begin{aligned}
        \|u-u_h\|_{L^2(\Omega)}^2 
        &= -b(\varphi, \bp-\bp_h) + c(\varphi,u-u_h) + a(\bPhi, \bsigma - \bsigma_h) - \left(\bPhi,\bp - \bp_h\right) \\
        &\quad - \left[(\bsigma-\bsigma_h,\bPsi) + b(u-u_h,\bPsi)\right] \\
        &= \underbrace{b(e_{\varphi}, \bp-\bp_h)}_{J_1} - \underbrace{c(e_{\varphi},u-u_h)}_{J_2} 
        - \underbrace{a(\bsigma - \bsigma_h, \be_{\bPhi})}_{J_5} + \underbrace{\left(\bp - \bp_h, \be_{\bPhi}\right)}_{J_6} \\
        &\quad +\underbrace{(\bsigma-\bsigma_h,\be_{\bPsi})}_{J_3} + \underbrace{b(u-u_h,\be_{\bPsi})}_{J_4} - \underbrace{a_{\rm s}(\bp_h, \bPi_\bQ^0 \bPsi)}_{J_7}.
    \end{aligned}
\end{equation}

    Starting from the identity \eqref{eq:L2_estimate_identity}, we only need to estimate the terms $\{J_i\}_{i=1}^{7}$:
    \begin{itemize}
        \item Estimates of $J_1+J_6$ and $J_2$: Since $ \be_{\bPhi} = \nabla e_\varphi$, and $e_\varphi = \Pi^{\rm SZ} \varphi - \varphi$ is continuous and vanishes on \(\partial \Omega\), we have
        \[
        \begin{aligned}
        J_1+J_6 &= - \int_{\mathcal{F}_h}\llbracket \varphi - \Pi^{SZ} \varphi \rrbracket \cdot \left(\vavg{\bp-\bp_h} - \bbeta[\bp-\bp_h] \right)\mathrm{d}s = 0,\\
        J_2 &=  c(u-u_h, e_\varphi) = 0.
        \end{aligned}        
        \]
        \item Estimate of $J_5$: Note that $a(\bsigma-\bsigma_h,\be_{\bPhi}) = (I_{2-2s}(\bsigma - \bsigma_h), \nabla e_\varphi)$. By H\"older's inequality and Lemma \ref{lem:riesz_grad} (equivalent form of $\widetilde{H}^s$-norm), approximation property \eqref{eq:SZ-uniform} and Sobolev regularity 
        \eqref{eq:epsilon_right_hand_side_adjoint}, we obtain
        \[
        \begin{aligned}
            |J_5|
            & \leq a(\bsigma - \bsigma_h,\bsigma - \bsigma_h)^{\frac12} \|I_{1-s}\nabla e_\varphi \|_{\boldsymbol{L}^2(\mathbb{R}^n)} \\
            & \leq  Ca(\bsigma - \bsigma_h, \bsigma - \bsigma_h)^{\frac12}\|e_{\varphi}\|_{\widetilde{H}^s(\Omega)}\\
            & \leq  \frac{C}{\sqrt{\varepsilon}}h^{\frac12-\varepsilon}a(\bsigma - \bsigma_h, \bsigma - \bsigma_h)^{1/2}\|u-u_h\|_{L^2(\Omega)}.
        \end{aligned}
        \]
        \item Estimate of $J_3+J_4$: Note that $\bsigma = \nabla u$, and $(\be_{\bPsi}, \bsigma_h) = (\be_{\bPsi}, \nabla_h u_h) = 0$ since $\bPi_Q^0$ is the local $L^2$ projection, we obtain:
        \[
        J_3+J_4 =  \int_{\mathcal{F}_h} \left(\vavg{\be_{\bPsi}} - \bbeta[\be_{\bPsi}]\right) \cdot \llbracket u-u_h \rrbracket \mathrm{d}s.
        \]
        Thus, by the approximation property \eqref{eq:Pi-uniform} and Sobolev regularity \eqref{eq:epsilon_right_hand_side_adjoint},
        \[
        \begin{aligned}
        |J_3+J_4| &\lesssim c(u-u_h,u-u_h)^{\frac12}\Big(\sum_{T\in \mathcal{T}_h} C_{11}^{-1} \|\be_{\bPsi}\|^2_{L^2(\partial T)}\Big)^{\frac12}\\
        &\leq \frac{C}{\sqrt{\varepsilon}} h^{\frac12-\varepsilon} c(u-u_h,u-u_h)^{1/2}\|u-u_h\|_{L^2(\Omega)}.
        \end{aligned} 
        \]
        \item Estimate of $J_7$: When $\bSigma_h=\bQ_h$, the term $J_7$ vanishes identically. In the case $\bSigma_h \subsetneq \bQ_h$, we have
        $$
        \begin{aligned}
        a_{\rm s}(\bPi_Q^0 \bPsi, \bPi_Q^0 \bPsi)^{\frac12} \leq& \Big( \sum_{T\in \mathcal{T}_h}C_{\rm s} \|\bPsi - \bPi_{0}^0 \bPsi\|_{L^2(T)}^2 \Big)^{\frac12}\\
        \lesssim & 
        \begin{cases}
        h^{1-s} h^{\frac32 - s -\varepsilon} \|\bPsi\|_{H^{\frac32 - s -\varepsilon}(\Omega)} & \text{if } n=1 \\
        h^{-1 + \max\{\frac{n}{2(n-1)}, s\}} h^{\frac32 - s -\varepsilon} \|\bPsi\|_{H^{\frac32 - s -\varepsilon}(\Omega)} & \text{if } n\geq 2
        \end{cases} \\
        \lesssim& \frac{1}{\sqrt{\varepsilon}} h^{\frac12-\varepsilon} \|u-u_h\|_{L^2(\Omega)}.
        \end{aligned}
        $$ 
        Therefore, when $\bSigma_h \subsetneq \bQ_h$, it follows that
        $$
        |J_7| \lesssim a_{\rm s}(\bp_h, \bp_h)^{\frac12} \frac{1}{\sqrt{\varepsilon}} h^{\frac12-\varepsilon} \|u-u_h\|_{L^2(\Omega)}.
        $$    
    \end{itemize} 
    
    Combining all the above bounds, it follows that, on both quasi-uniform and graded meshes,  
the $L^{2}$-error exhibits an improvement of order $h^{\frac12}|\log h|^{\frac12}$ over the energy error.  
Consequently, the $L^{2}$-error estimates stated in  
Theorem~\ref{tm:uniform_estimate} and Theorem~\ref{tm:graded_estimate}, namely  
\eqref{eq:L2-result-uniform} and \eqref{eq:L2-result-graded}, are obtained.

\begin{remark}[sharpness of $L^2$ error estimate]
As in the discussion in \cite[Section 3]{borthagaray2021local}, the duality argument still relies on the unweighted estimate \eqref{eq:epsilon_right_hand_side_adjoint}.  
This is due to the fact that only $L^2$ data estimates are currently available for the adjoint problem.  
The numerical results in Section \ref{sc:fracDG-numerical} indicate that the $L^2$ error estimate on quasi-uniform meshes is sharp, whereas for graded meshes the observed $L^2$-convergence rate appears to be better than the theoretical prediction.
\end{remark}

\section{Numerical experiments}\label{sc:fracDG-numerical}
In this section, we present numerical experiments demonstrating the performance of the LDG method.
For the convergence rate tests, problems posed on the one- and two-dimensional unit ball
$\Omega = B_1(0)$ are considered, with the right-hand side chosen as $f=1$. In this setting, the exact solution admits the expression:
\begin{equation} \label{eq:unit-circle}
u = K_{n,s} (1-|x|^2)^s_+, \quad
\bsigma = -2 sK_{n,s}  x (1-|x|^2)^{s-1}_+, \quad 
\bp = I_{2-2s}\bsigma= -\tfrac{1}{n} x,
\end{equation}
where $K_{n,s}=\frac{2^{-2s}\Gamma(n/2)}{\Gamma(n/2+s)\Gamma(1+s)}$. Note that, due to the high symmetry of the computational domain, the $\bp$ does not exhibit any boundary singularity in this setting. Singular behavior of $\bp$ will be observed in the final test. 

In the LDG numerical fluxes \eqref{eq:numer-flux}, we set $\bbeta = \boldsymbol{0}$ and $C_{11}|_F = h_F^{1-2s}$, where $F \in \mathcal{F}_h$ denotes an $(n-1)$-dimensional face. 
For $n=1$, $h_F$ is the averaged size of the adjacent cells, and for $n=2$, it is the edge length.
Since the energy error $|(\bsigma-\bsigma_h,\,u-u_h)|_{\mathcal{A}}$ is not directly accessible, an identity inspired by \cite{acosta2017fractional} is invoked to obtain a computable expression:
\[
    |(\bsigma-\bsigma_h,u-u_h)|^2_{\mathcal{A}} 
    = -2(\bp,\bsigma_h) + (f,u+u_h) - a_{\rm s}(\bp_h, \bp_h),
\]
where \eqref{eq:error_equation} and \eqref{eq:short_LDG_scheme} are employed. For the convergence rate tests, the energy and $L^2$ errors for $s = 0.6, 0.7, 0.8, 0.9$ are presented.

\subsection{One-dimensional convergence rate test}
The exact solution \eqref{eq:unit-circle} on the one-dimensional unit ball is employed in this test. For uniform meshes, the observed convergence rates of the energy and $L^{2}$ errors, displayed in Figure~\ref{fig:uniform1D_001}, are $\mathcal{O}(h^{1/2})$ and $\mathcal{O}(h)$, respectively, in agreement with the Theorem \ref{tm:uniform_estimate}.
\begin{figure}[!htbp]
    \centering
    \includegraphics[width=0.49\textwidth]{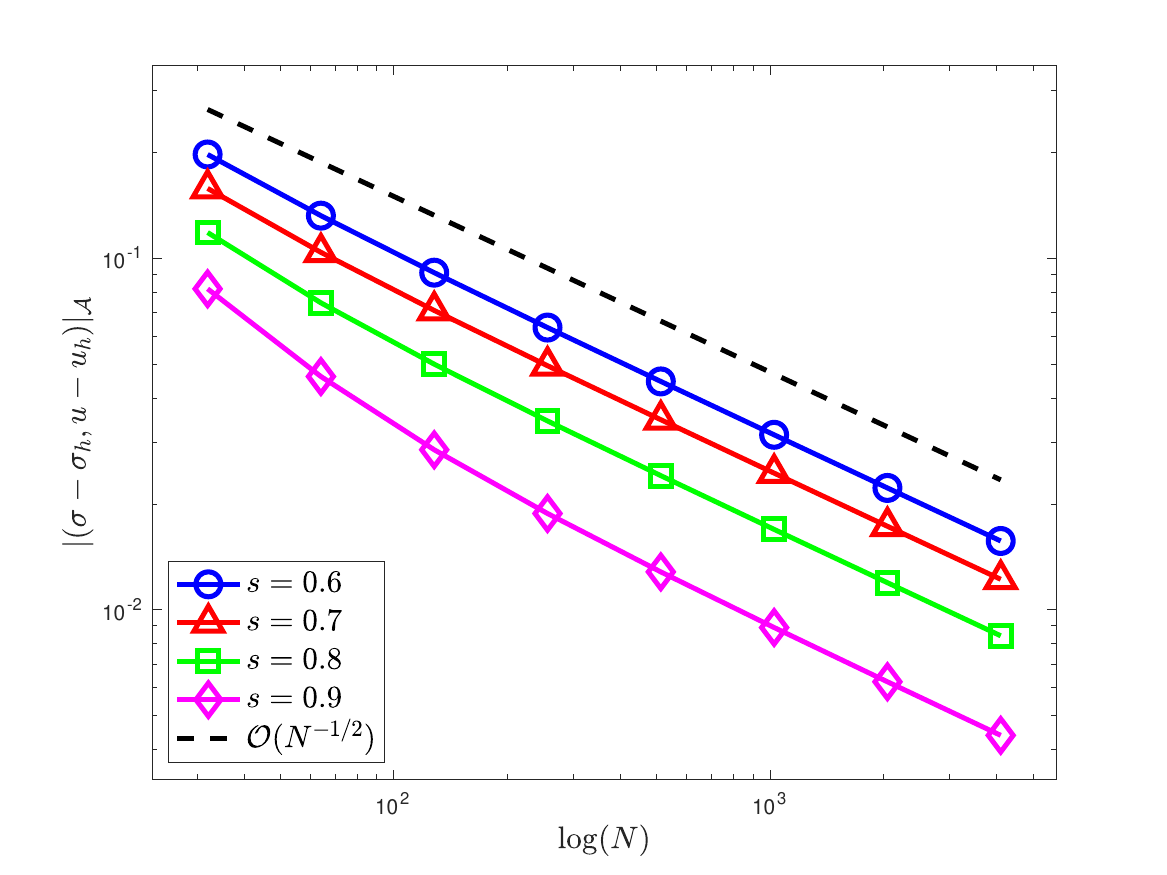}
    \includegraphics[width=0.49\textwidth]{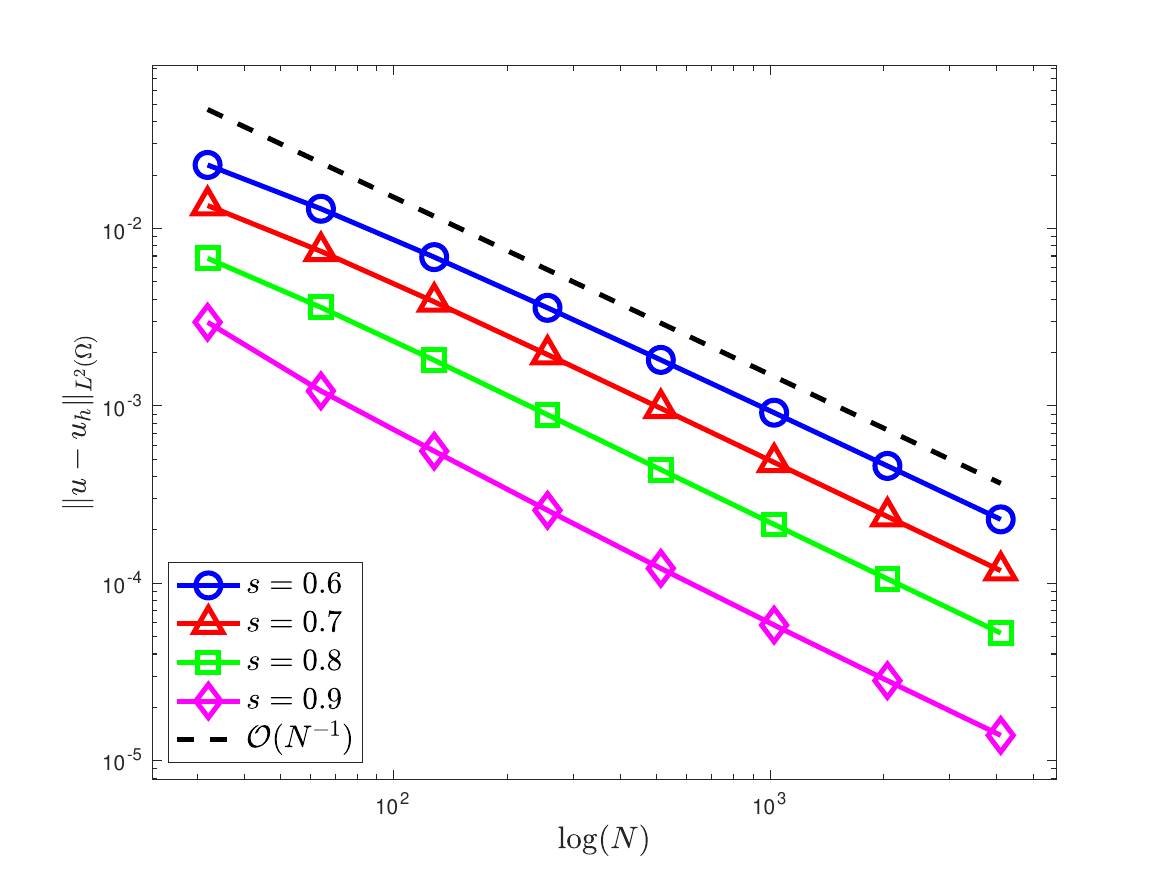}
    \vspace{-5mm}
    \caption{Convergence rates of $|(\bsigma-\bsigma_h,u-u_h)|_{\mathcal{A}}$ (left) and $\|u-u_h\|_{L^2(\Omega)}$ (right), 1D uniform meshes, $\bSigma(T) = \bQ(T)=\boldsymbol{\mathcal{P}}_0(T)$, $V(T) = \mathcal{P}_1(T)$.}
    \label{fig:uniform1D_001}
\end{figure}

For graded meshes, the condition of Theorem~\ref{tm:graded_estimate}
is adopted, with grading parameter $\mu = 4-2s$ (implying $h \eqsim N^{-1}$) and $C_{\rm s}|_T = h_T^{2-2s}$. The expected rates are $\mathcal{O}(N^{-2+s})$ for the energy error and $\mathcal{O}(N^{-5/2+s})$ for the $L^{2}$ error. As shown in Figure~\ref{fig:graded1D_001}, the energy error attains the predicted rate $\mathcal{O}(N^{-2+s})$, whereas the $L^2$ error achieves the unexpectedly sharp rate $\mathcal{O}(N^{-2})$, with indications of an even higher empirical rate.
\begin{figure}[!htbp]
    \centering
    \includegraphics[width=0.49\textwidth]{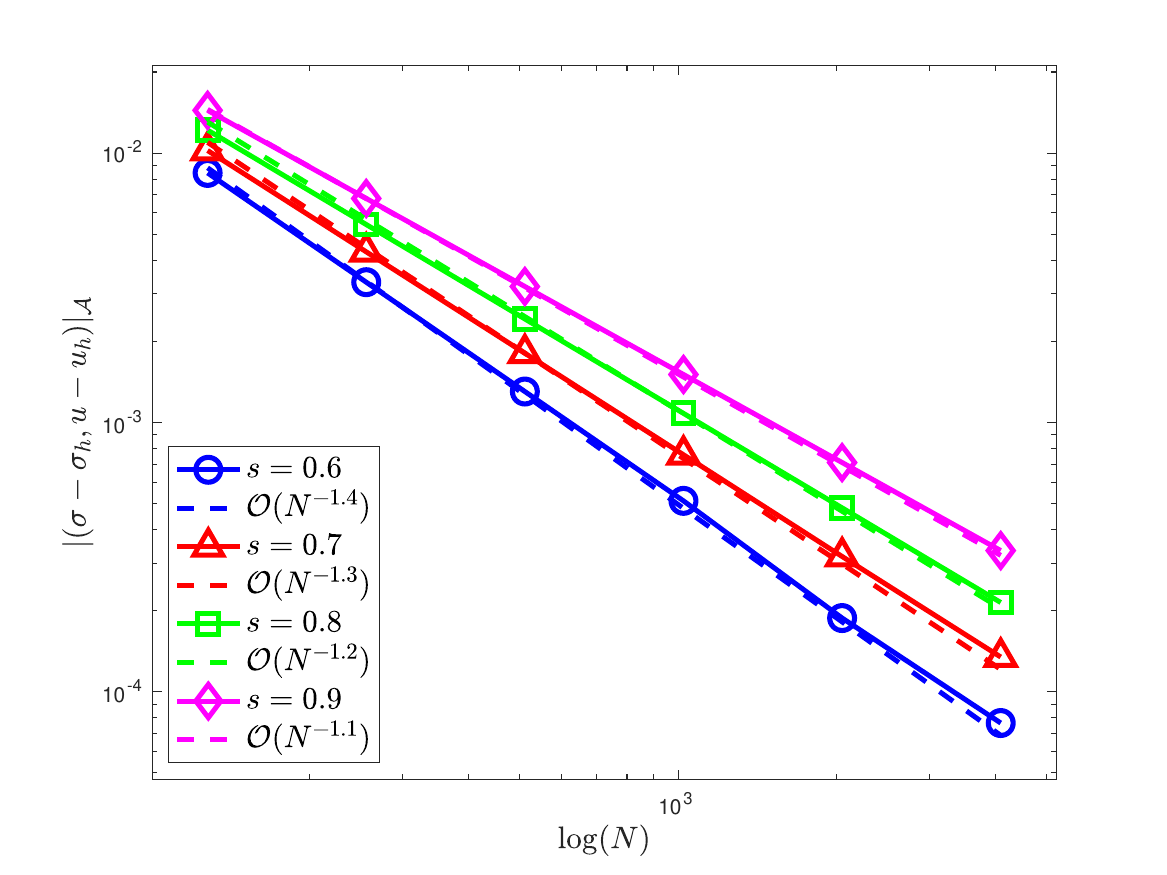}
    \includegraphics[width=0.49\textwidth]{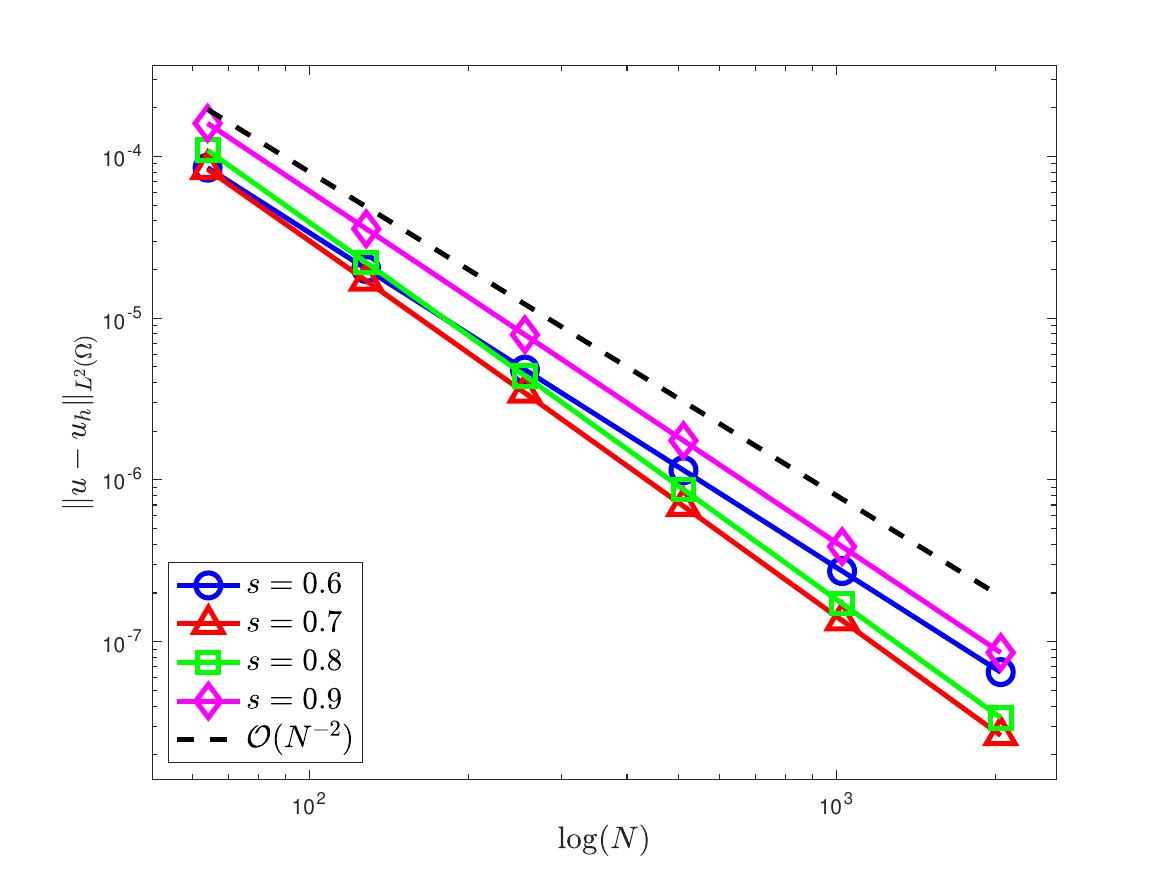}
    \vspace{-5mm}
    \caption{Convergence rates of $|(\bsigma-\bsigma_h,u-u_h)|_{\mathcal{A}}$ (left) and $\|u-u_h\|_{L^2(\Omega)}$ (right),
    1D graded meshes with $\mu = 4-2s$, $\bSigma(T) = \boldsymbol{\mathcal{P}}_0(T)$, $\bQ(T) = \boldsymbol{\mathcal{P}}_1(T)$, $V(T) = \mathcal{P}_1(T)$, $C_{\rm s}|_T = h_T^{2-2s}$.}
    \label{fig:graded1D_001}
\end{figure}

\subsection{Two-dimensional convergence rate test}
The exact solution \eqref{eq:unit-circle} on the two-dimensional unit ball is employed in this test. Figure \ref{fig:uniform2D_001} illustrates the results on uniform meshes. The observed convergence rates of the energy and $L^2$ errors are approximately $\mathcal{O}(N^{-1/4})$ and $\mathcal{O}(N^{-1/2})$, respectively, with slightly better performance in some cases (in particular for $s = 0.9$). Since $h = \mathcal{O}(N^{-1/2})$ on uniform meshes, these numerical results are essentially consistent with Theorem \ref{tm:uniform_estimate}.
\begin{figure}[htbp!]
    \centering
    \includegraphics[width=0.49\textwidth]{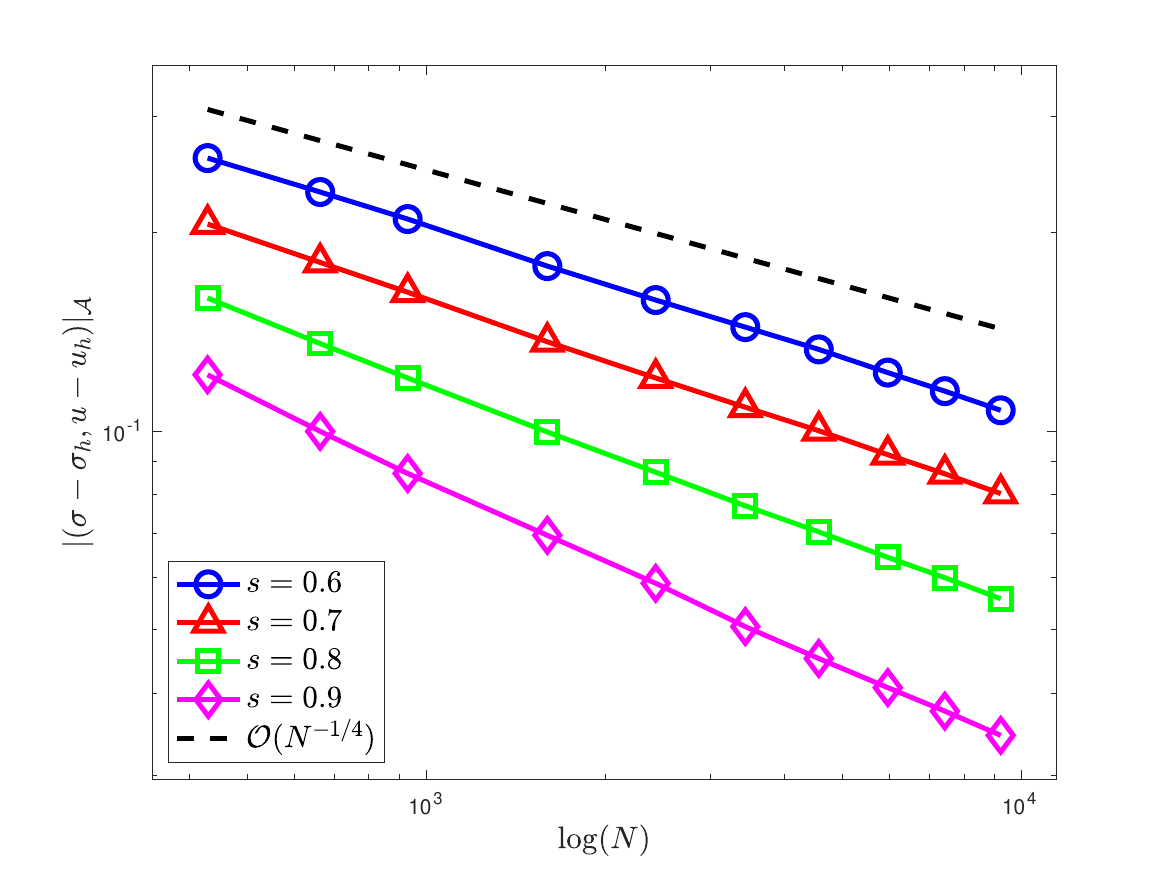}
    \includegraphics[width=0.49\textwidth]{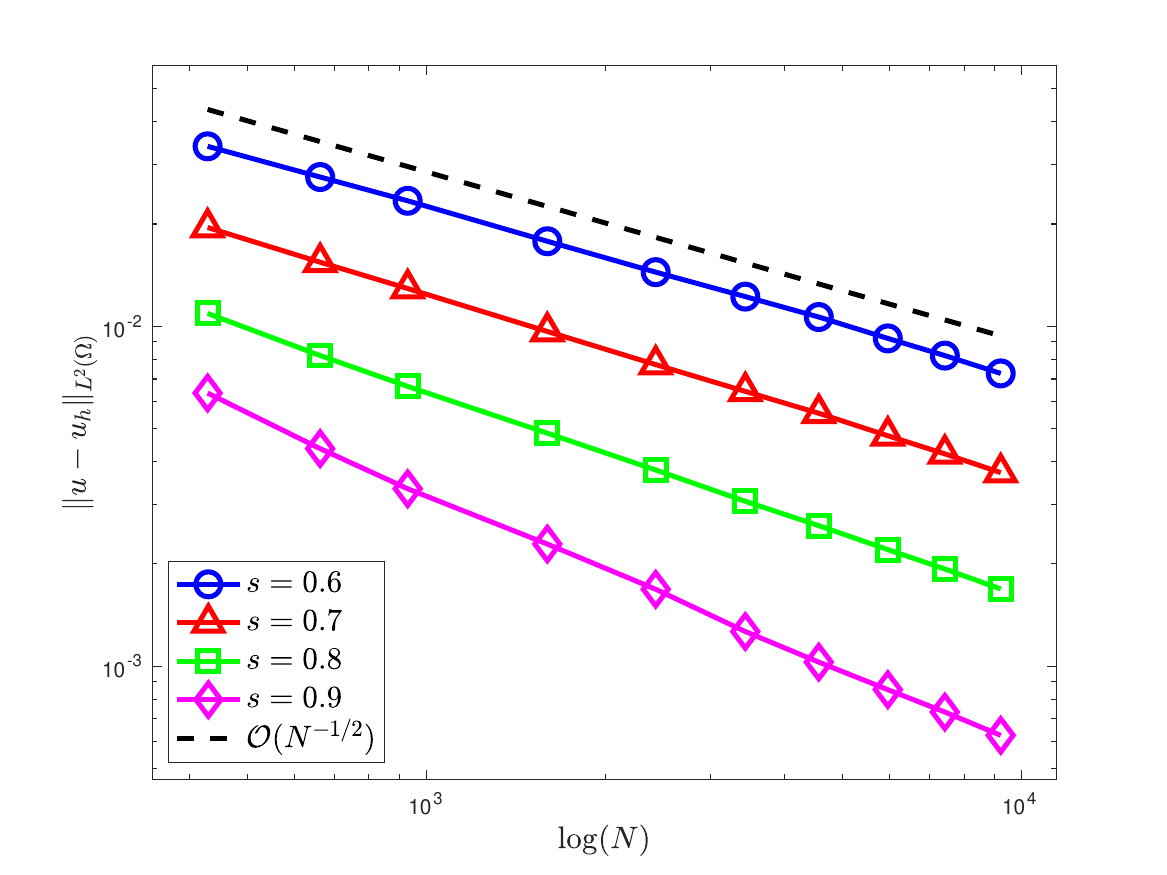}
        \vspace{-5mm}
    \caption{Convergence rates of $|(\bsigma-\bsigma_h,u-u_h)|_{\mathcal{A}}$ (left) and $\|u-u_h\|_{L^2(\Omega)}$ (right), 2D quasi-uniform meshes, $\bSigma(T) = \bQ(T)=\boldsymbol{\mathcal{P}}_0(T)$, $V(T) = \mathcal{P}_1(T)$.}
    \label{fig:uniform2D_001}
\end{figure}

For graded meshes, in accordance with conditions \eqref{eq:parameter_condition} and \eqref{eq:parameter-C}, we take the parameters $\mu = 2$ and $C_{\rm s} = 1$.
Theorem \ref{tm:graded_estimate} shows that the convergence rate of the energy error is $\mathcal{O}(N^{-1/2})$ (up to a logarithmic factor), while the rate of $\|u - u_h\|_{L^2(\Omega)}$ is $\mathcal{O}(N^{-3/4})$. From the numerical results in Figure \ref{fig:graded2D_001}, it can be observed that the energy error essentially attains the predicted convergence rate, whereas the performance of the $L^2$ error appears slightly better than the theoretical estimate, lying between $\mathcal{O}(N^{-3/4})$ and $\mathcal{O}(N^{-1})$.
\begin{figure}[!htbp]
    \centering
    \includegraphics[width=0.49\textwidth]{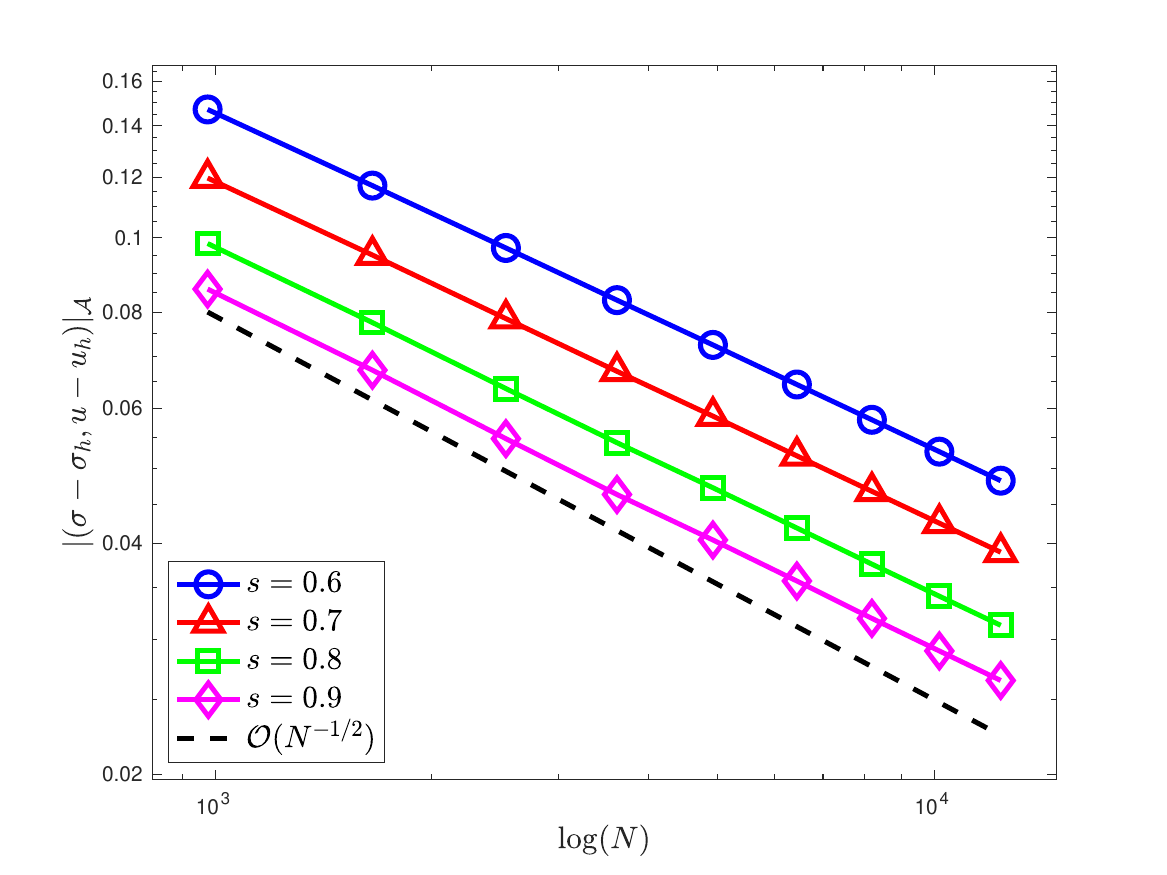}
    \includegraphics[width=0.49\textwidth]{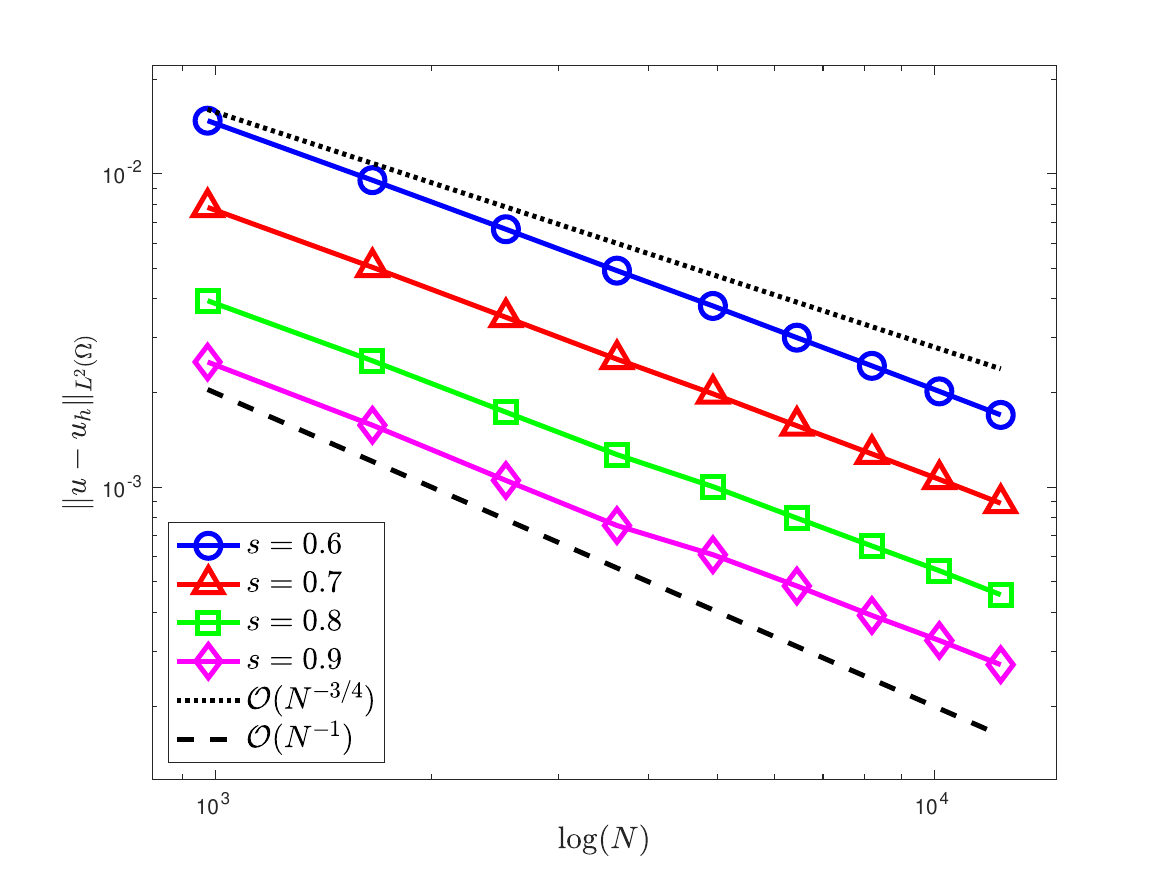}
    \vspace{-5mm}
    \caption{Convergence rates of $|(\bsigma-\bsigma_h,u-u_h)|_{\mathcal{A}}$ (left) and $\|u-u_h\|_{L^2(\Omega)}$ (right),
    2D graded meshes with $\mu = 2$, $\bSigma(T) = \boldsymbol{\mathcal{P}}_0(T)$, $\bQ(T) = \boldsymbol{\mathcal{P}}_1(T)$, $V(T) = \mathcal{P}_1(T)$, $C_{\rm s}|_T = 1$.}
    \label{fig:graded2D_001}
\end{figure}

\subsection{Performance of Riesz potential on square domain}
The final numerical experiment is performed on the two-dimensional computational domain $\Omega = (-1,1)^2$, where the geometry is no longer isotropic. The mesh is generated by applying newest vertex bisection to an initial coarse triangulation, and the marking criterion follows \cite{borthagaray2023robust}: a cell $T$ is marked whenever
\begin{equation*} \label{eq:marking}
|T| > \theta N^{-1} \log N \cdot d(x_T,\partial\Omega)^{2(\mu-1) / \mu},
\end{equation*}
where $x_T$ denotes the barycenter of $T$, with parameters $\theta = 24$ and $\mu = 2$. It is further recalled that $N := \dim V_h$ equals three times the number of elements.

The two-dimensional graded-mesh configuration is employed in this experiment, with
$\boldsymbol{\Sigma}(T)=\boldsymbol{\mathcal{P}}_{0}(T)$,
$\boldsymbol{Q}(T)=\boldsymbol{\mathcal{P}}_{1}(T)$,
$V(T)=\mathcal{P}_{1}(T)$,
and $C_{\mathrm{s}}|_{T}=1$.
After eleven bisection steps, resulting in $5,776$ elements, the first component of the computed discrete Riesz potential $\boldsymbol{p}_h$ is shown in Figure~\ref{fig:ph-square}. From Figure~\ref{fig:ph-sub1}, it is observed that, due to the non-spherical geometry, the first component of the Riesz potential behaves almost linearly in the $x$-direction, whereas its behavior in the $y$-direction resembles that of the typical one-dimensional fractional Laplacian profile. In particular, Figure~\ref{fig:ph-sub2} presents the slice at $x=-1$ for $s=0.6, 0.7, 0.8, 0.9$. The variation of the first component of $\boldsymbol{p}_h$ along the $y$-axis becomes increasingly pronounced near the boundary as $s$ increases, which is consistent with the estimate~\eqref{eq:riesz_grad_est} on the differentiability of the Riesz potential.

\begin{figure}[!htbp]
    \centering
    \begin{subfigure}[t]{0.49\textwidth} 
        \centering
        \includegraphics[width=\textwidth]{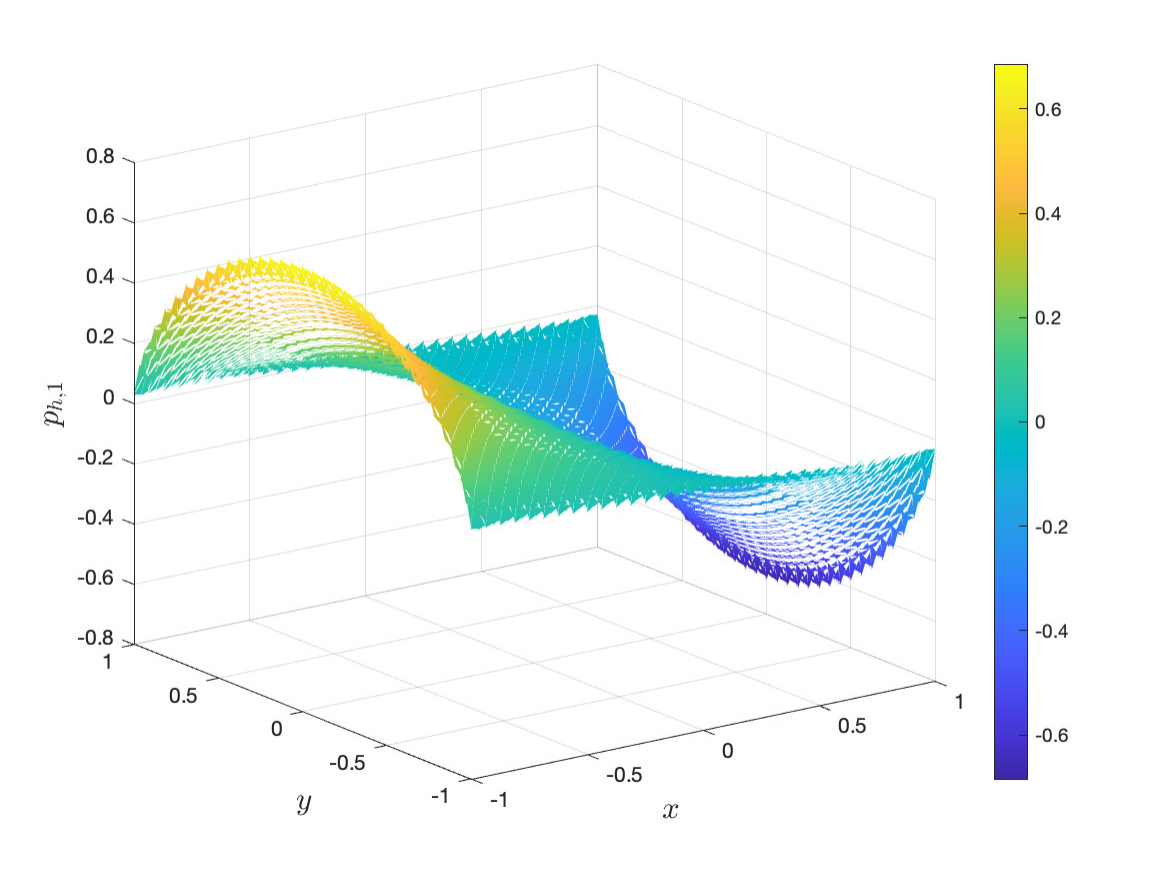} 
        \vspace{-2mm}
        \caption{$s = 0.9$, solution profile}
        \label{fig:ph-sub1}
    \end{subfigure}%
    \begin{subfigure}[t]{0.49\textwidth}
        \centering
        \includegraphics[width=\textwidth]{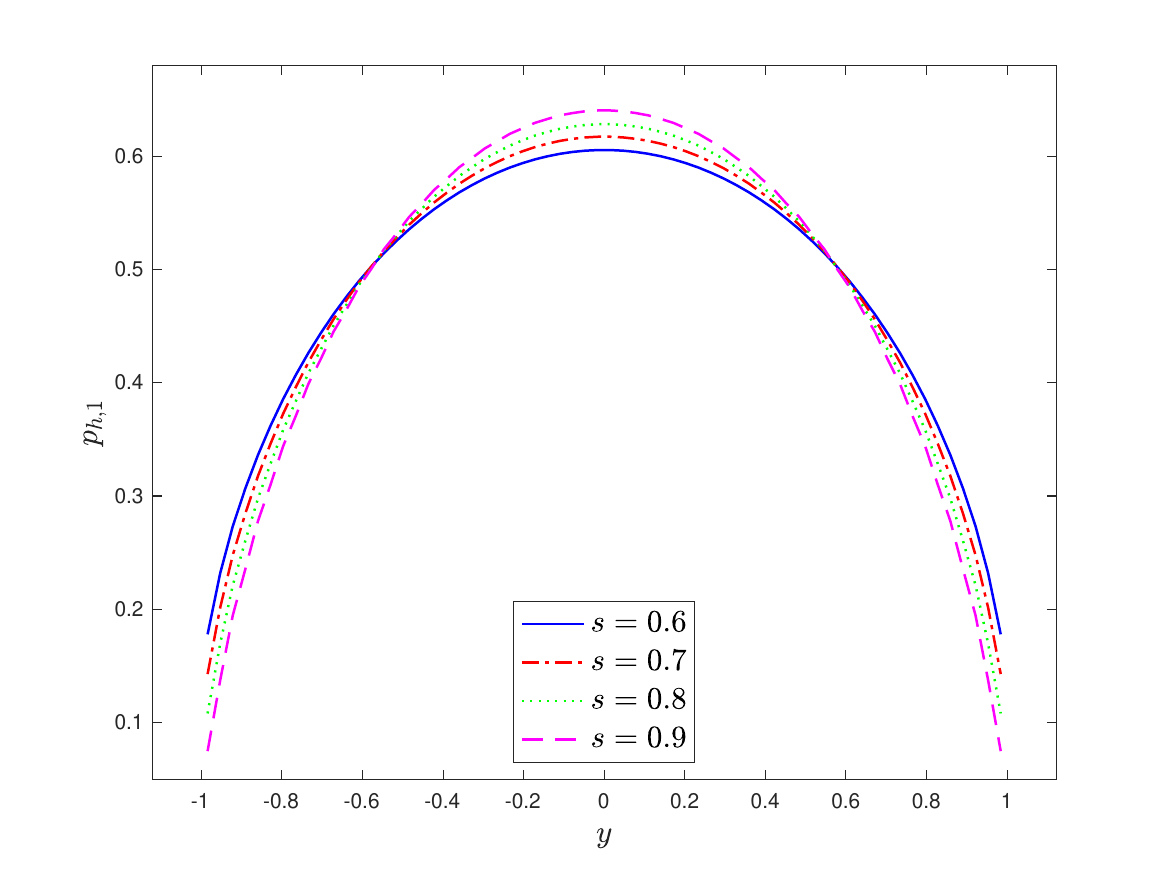}
              \vspace{-2mm}
        \caption{Slice at $x=-1$ for various values of $s$.}
        \label{fig:ph-sub2}
    \end{subfigure}
          \vspace{-3mm}
    \caption{First component of $\boldsymbol{p}_h$ on the domain $(-1,1)^2$ with the data $f=1$.} 
    \label{fig:ph-square}
\end{figure}

\bibliographystyle{siamplain}
\bibliography{frac.bib}
\end{document}